\theoremstyle{theorem}
	\newtheorem{thm}{Theorem}[section]
	\newtheorem{lem}[thm]{Lemma}
	\newtheorem{prop}[thm]{Proposition}
	\newtheorem{cor}[thm]{Corollary}
	\newtheorem{conj}[thm]{Conjecture}
\theoremstyle{definition}
	\newtheorem{defn}[thm]{Definition}
	\newtheorem{rem}[thm]{Remark}
	\newtheorem{quest}[thm]{Question}
\newcommand{\N}{\mathbb{N}}
\newcommand{\Z}{\mathbb{Z}}
\newcommand{\Q}{\mathbb{Q}}
\newcommand{\R}{\mathbb{R}}
\newcommand{\C}{\mathbb{C}}
\newcommand{\T}{\mathbb{T}}
\newcommand{\F}{\mathbb{F}}
\newcommand{\B}{\mathcal{B}}
\newcommand{\bfX}{\mathbf{X}}
\newcommand{\bfY}{\mathbf{Y}}
\newcommand{\bfZ}{\mathbf{Z}}
\newcommand{\bfW}{\mathbf{W}}
\newcommand{\mX}{\mathcal{X}}
\newcommand{\mY}{\mathcal{Y}}
\newcommand{\mZ}{\mathcal{Z}}
\newcommand{\mI}{\mathcal{I}}
\newcommand{\mW}{\mathcal{W}}
\newcommand{\M}{\mathcal{M}}
\newcommand{\eps}{\varepsilon}
\renewcommand{\tilde}{\widetilde}
\renewcommand{\hat}{\widehat}
\newcommand{\tendsto}[1]{\xrightarrow[#1]{}}
\newcommand{\UClim}{\textup{UC-}\lim}
\newcommand{\id}{\textup{id}}
\newcommand{\ind}{\mathbbm{1}}
\newcommand{\E}[2]{\mathbb{E} \left[ #1 \mid #2 \right]}
\newcommand{\innprod}[2]{\left\langle #1, #2 \right\rangle}
\newcommand{\norm}[2]{\left\| #2 \right\|_{#1}}
\newcommand{\seminorm}[2]{{\left\vert\kern-0.25ex\left\vert\kern-0.25ex\left\vert #2 
    \right\vert\kern-0.25ex\right\vert\kern-0.25ex\right\vert}_{#1}}
\newcommand{\Hil}{\mathcal{H}}
\newcommand{\End}{\textup{End}}
\title{Khintchine-type double recurrence in abelian groups}
\author{Ethan Ackelsberg}
\address{School of Mathematics, Institute for Advanced Study, Princeton, NJ 08540}
\email{eackelsberg@ias.edu}
\date{\today}
\keywords{Khintchine-type recurrence, multiple ergodic averages, cocycles, abelian group actions}
\subjclass{Primary: 37A15; Secondary: 37A30, 05D10}
\begin{document}

\maketitle

%%%%%%%%%%%%%%%%%%%%%%%%%%%%%%%%%%%%%%%%%%%%%%%%%%%%%%%%
% ---- ABSTRACT ---- %
%%%%%%%%%%%%%%%%%%%%%%%%%%%%%%%%%%%%%%%%%%%%%%%%%%%%%%%%

\begin{abstract}
	We prove a Khintchine-type recurrence theorem for pairs of endomorphisms of a countable discrete abelian group.
	As a special case of the main result, if $\Gamma$ is a countable discrete abelian group, $\varphi, \psi \in \End(\Gamma)$,
	and $\psi - \varphi$ is an injective endomorphism with finite index image, then for any ergodic measure-preserving
	$\Gamma$-system $\left( X, \mX, \mu, (T_g)_{g \in \Gamma} \right)$, any measurable set $A \in \mX$,
	and any $\eps > 0$, the set of $g \in \Gamma$ for which
	\begin{equation*}
		\mu \left( A \cap T_{\varphi(g)}^{-1} A \cap T_{\psi(g)}^{-1} A \right) > \mu(A)^3 - \eps
	\end{equation*}
	is syndetic.
	This generalizes the main results of \cite{abs}
	and essentially answers a question left open in that paper (\cite[Question 1.12]{abs}).
	
	For the group $\Gamma = \Z^d$, we deduce that for any matrices $M_1, M_2 \in M_{d \times d}(\Z)$
	whose difference $M_2 - M_1$ is nonsingular, any ergodic measure-preserving
	$\Z^d$-system $\left( X, \mX, \mu, (T_{\vec{n}})_{\vec{n} \in \Z^d} \right)$, any measurable set $A \in \mX$,
	and any $\eps > 0$, the set of $\vec{n} \in \Z^d$ for which
	\begin{equation*}
		\mu \left( A \cap T_{M_1 \vec{n}}^{-1} A \cap T_{M_2 \vec{n}}^{-1} A \right) > \mu(A)^3 - \eps
	\end{equation*}
	is syndetic,
	a result that was previously known only in the case $d = 2$ (see \cite[Theorem 7.1]{abs}).
	
	The key ingredients in the proof are:
	(1) a recent result obtained jointly with Bergelson and Shalom \cite{abs}
	that says that the relevant ergodic averages are controlled by a characteristic factor
	closely related to the \emph{quasi-affine} (or \emph{Conze--Lesigne}) factor;
	(2) an extension trick to reduce to systems with well-behaved (with respect to $\varphi$ and $\psi$) discrete spectrum; and
	(3) a description of Mackey groups associated to quasi-affine cocycles over rotational systems
	with well-behaved discrete spectrum.
\end{abstract}

%%%%%%%%%%%%%%%%%%%%%%%%%%%%%%%%%%%%%%%%%%%%%%%%%%%%%%%%
% ---- INTRO ---- %
%%%%%%%%%%%%%%%%%%%%%%%%%%%%%%%%%%%%%%%%%%%%%%%%%%%%%%%%

\section{Introduction}

%%%%%%%%%%%%%%%%%%%%%%%%%%%%%%%%%%%%%%%%%%%%%%%%%%%%%%%%

This paper is a continuation of work of the author together with Bergelson and Best \cite{abb} and Bergelson and Shalom \cite{abs} investigating the phenomenon of multiple recurrence with large intersections for actions of countable abelian groups (see also \cite{shalom}).

%%%%%%%%%%%%%%%%%%%%%%%%%%%%%%%%%%%%%%%%%%%%%%%%%%%%%%%%

\subsection{Background and motivation}

The impetus for studying large intersections for multiple recurrence comes from the following two classical results:

\begin{thm}[Khintchine's recurrence theorem \cite{Kh}] \label{thm: khintchine}
	For any invertible measure-preserving system $\left(X, \mX, \mu, T \right)$, any $A \in \mX$, and any $\eps > 0$, the set
	\begin{equation*}
		\left\{ n \in \Z : \mu \left( A \cap T^{-n}A \right) > \mu(A)^2 - \eps \right\}
	\end{equation*}
	has bounded gaps.
\end{thm}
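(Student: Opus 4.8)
The plan is to work with the Koopman operator and reduce everything to von Neumann's mean ergodic theorem by an averaging trick over far-away windows. Write $f = \ind_A \in \Hil := L^2(\mu)$, let $U$ be the unitary Koopman operator $Uh = h \circ T$ on $\Hil$, and let $P \colon \Hil \to \Hil$ be the orthogonal projection onto the closed subspace of $U$-invariant vectors. Since $\mu$ is a probability measure and the constant function $\mathbf{1} \in \Hil$ is $U$-invariant, $P\mathbf{1} = \mathbf{1}$; as $P$ is a self-adjoint idempotent, $\langle Pf, f\rangle = \|Pf\|^2$, and by Cauchy--Schwarz $\|Pf\|^2 \ge |\langle Pf, \mathbf{1}\rangle|^2 = |\langle f, \mathbf{1}\rangle|^2 = \mu(A)^2$. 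Note also that $\mu\left(A \cap T^{-n}A\right) = \langle U^n f, f\rangle$ for every $n \in \Z$.

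Next I would argue by contradiction. Suppose $G_\eps := \left\{ n \in \Z : \mu\left(A \cap T^{-n}A\right) > \mu(A)^2 - \eps \right\}$ does not have bounded gaps. Then for every $k \in \N$ there is an interval $I_k = \{N_k, N_k + 1, \dots, N_k + k - 1\}$ disjoint from $G_\eps$, so that $\langle U^n f, f\rangle \le \mu(A)^2 - \eps$ for all $n \in I_k$ and hence
\[
	\frac{1}{k} \sum_{n \in I_k} \langle U^n f, f\rangle \le \mu(A)^2 - \eps .
\]
The point is that $\sum_{n \in I_k} U^n f = U^{N_k}\bigl(\sum_{j=0}^{k-1} U^j f\bigr)$, so the left-hand side equals $\bigl\langle U^{N_k}\bigl(\tfrac1k\sum_{j=0}^{k-1} U^j f\bigr),\, f\bigr\rangle$. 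By von Neumann's mean ergodic theorem, $\tfrac1k\sum_{j=0}^{k-1} U^j f \to Pf$ in $\Hil$; since $U^{N_k}$ is an isometry fixing the invariant vector $Pf$, it follows that
\[
	\frac{1}{k} \sum_{n \in I_k} \langle U^n f, f\rangle = \langle U^{N_k} Pf, f\rangle + o(1) = \langle Pf, f\rangle + o(1) \ge \mu(A)^2 + o(1)
\]
as $k \to \infty$, contradicting the previous display once $k$ is large. Therefore $G_\eps$ has bounded gaps.

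Since the proof is short, there is no serious obstacle to overcome; the only genuinely substantive move is the window trick, i.e.\ the observation that, although the mean ergodic theorem directly controls only Cesàro averages based at the origin, an average of $\langle U^n f, f\rangle$ over an interval $I_k$ arbitrarily far out can be rewritten using the isometry $U^{N_k}$ (which fixes $Pf$) and is therefore still close to $\langle Pf, f\rangle \ge \mu(A)^2$. One could instead route the argument through the spectral theorem, writing $\langle U^n f, f\rangle = \int_{\T} e^{2\pi i n t}\, d\sigma_f(t)$ for the spectral measure $\sigma_f$ and isolating its atom at $t = 0$ (which has mass $\|Pf\|^2 \ge \mu(A)^2$); but then one must separately argue that the pure-point part of $\sigma_f$ contributes close to its full mass along a suitable Bohr set while the continuous part is negligible there, and extracting genuine syndeticity from that decomposition is fiddlier than the direct argument above. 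Finally, the same proof applies verbatim to any $f \in L^2(\mu)$, with $\langle f, \mathbf{1}\rangle$ in place of $\mu(A)$.
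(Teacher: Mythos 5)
Your proof is correct. The paper does not actually supply a proof of Theorem \ref{thm: khintchine}; it is stated as classical background with a citation to Khintchine's original paper, so there is no in-text argument to compare against. That said, your argument is clean and complete: the inequality $\langle Pf, f\rangle = \|Pf\|^2 \ge |\langle f, \mathbf{1}\rangle|^2 = \mu(A)^2$ is correctly derived (using $P\mathbf{1}=\mathbf{1}$ and Cauchy--Schwarz against the unit vector $\mathbf{1}$), the identification $\mu(A\cap T^{-n}A)=\langle U^n f, f\rangle$ is right, and the window trick $\sum_{n\in I_k}U^n f = U^{N_k}\sum_{j=0}^{k-1}U^j f$ combined with the mean ergodic theorem and the fact that $U^{N_k}$ is an isometry fixing $Pf$ (valid for $N_k$ of either sign since $T$ is invertible) does exactly what you want. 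This is the standard Hilbert-space proof of Khintchine's theorem, and your closing remark that the spectral-measure route through $\sigma_f(\{0\})=\|Pf\|^2$ is ``fiddlier'' is a fair judgment; the direct translation-invariance argument you give is the one usually presented.
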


\begin{thm}[Furstenberg's multiple recurrence theorem \cite{furstenberg}] \label{thm: mult rec}
	For any invertible measure-preserving system $(X, \mX, \mu, T)$, any $A \in \mX$ with $\mu(A) > 0$, and any positive integer $k \in \N$,
	\begin{equation*}
		\liminf_{N-M \to \infty}{\frac{1}{N-M} \sum_{n=M}^{N-1}{\mu\left( A \cap T^{-n}A \cap \cdots \cap T^{-kn}A \right)}} > 0.
	\end{equation*}
	In particular, there exists $c > 0$ such that the set
	\begin{equation*}
		\left\{ n \in \Z : \mu\left( A \cap T^{-n}A \cap \cdots \cap T^{-kn}A \right) > c \right\}
	\end{equation*}
	has bounded gaps.
\end{thm}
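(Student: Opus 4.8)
The plan is to prove this via the ergodic-theoretic route to Szemer\'edi's theorem: the Furstenberg structure theorem together with an induction over a tower of factors. Call a system $(X,\mX,\mu,T)$ \emph{SZ} if for every $A \in \mX$ with $\mu(A) > 0$ and every $k \in \N$ the $\liminf$ in the statement is positive. Two reductions come first. By the ergodic decomposition it suffices to prove that every \emph{ergodic} system is SZ, since the multiple averages integrate over the ergodic components. And the ``bounded gaps'' conclusion follows from positivity of the $\liminf$: if the set of good returns had unbounded gaps, some window $[M,N)$ with $N-M$ arbitrarily large would contribute only small terms $\mu(A \cap T^{-n}A \cap \dots \cap T^{-kn}A)$, contradicting that the average over $[M,N)$ stays bounded below by a fixed constant once $N-M$ is large.

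Next I would set up the inductive scaffolding. The base case is that any Kronecker system (rotation on a compact abelian group) is SZ; this reduces to a statement about Bohr sets, provable from the topological van der Waerden theorem on the acting group. Then two stability properties. (i) An inverse limit of SZ systems is SZ, because a positive-measure set and the finite multiple averages are $L^2$-approximated by data coming from a single factor in the tower, where SZ already holds. (ii) A weakly mixing extension $\pi\colon(X,\mX,\mu,T) \to (Y,\mY,\nu,S)$ of an SZ system is SZ: a van der Corput estimate together with the relative weak mixing of $\pi$ forces
\[
	\frac{1}{N-M}\sum_{n=M}^{N-1}\int_X f_0 \prod_{j=1}^{k} T^{jn}f_j\,d\mu \;-\; \frac{1}{N-M}\sum_{n=M}^{N-1}\int_Y g_0\prod_{j=1}^{k} S^{jn}g_j\,d\nu \;\longrightarrow\; 0,
\]
where $g_j = \E{f_j}{\mY}$; taking each $f_j = \ind_A$, the average over $Y$ involves only $g := \E{\ind_A}{\mY}\ge 0$ with $\int_Y g\,d\nu = \mu(A) > 0$, so its $\liminf$ is positive by SZ for $Y$ applied to a super-level set of $g$, and hence so is the $\liminf$ over $X$.

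The crucial step is (iii): a compact extension $\pi\colon X \to Y$ of an SZ system is SZ. Here one approximates $\ind_A$ in $L^2(\mu)$ by a function almost periodic relative to $\mY$, so that its $T$-iterates stay, fiberwise over $\nu$-a.e.\ base point, within $\eps$ of a fixed finite-dimensional submodule; a van der Waerden coloring argument applied to these fiberwise orbits yields a syndetic set of $n$ along which $T^{jn}\ind_A$ is close to $\ind_A$ on a controlled set of fibers for all $j = 0,1,\dots,k$ simultaneously, and one invokes SZ for $Y$ to guarantee that this set of fibers has positive $\nu$-measure along an arithmetic progression in the base; integrating over $Y$ gives a positive lower bound for the multiple average. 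This is the main obstacle: it is where the structure theory of compact extensions (conditional Hilbert modules, Hilbert--Schmidt operators on fibers) meets the combinatorics of van der Waerden, and where one must juggle the dependence of the coloring on the base point against the uniformity of the van der Waerden number. Finally, the Furstenberg structure theorem --- every ergodic system is a weakly mixing extension of a maximal distal factor, and a distal factor sits atop a (possibly transfinite) tower $\mZ_0 \subseteq \mZ_1 \subseteq \dots \subseteq \mZ_\eta$ of compact extensions (with $\mZ_0$ trivial) and inverse limits at limit stages --- allows transfinite induction using the base case and (i)--(iii) to conclude that every ergodic system is SZ, which is the theorem.
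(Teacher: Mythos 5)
The paper does not prove this theorem; it is quoted as a classical result and attributed to Furstenberg (1977). Your sketch is a faithful recapitulation of Furstenberg's original argument from that reference: reduction to ergodic systems, the SZ property, stability under inverse limits and weakly mixing extensions, the compact-extension lemma via relative almost periodicity and van der Waerden, and closure under the Furstenberg structure theorem. The deduction of ``bounded gaps'' from positivity of the $\liminf$ is also correct. So the proposal is sound and follows the same route the paper implicitly invokes by citation.
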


A subset of $\Z$ with bounded gaps is called \emph{syndetic}.
More generally, in a countable discrete abelian group $(\Gamma, +)$, a subset $S \subseteq \Gamma$ is \emph{syndetic} if finitely many translates of $S$ cover $\Gamma$.

With the aim of finding a common refinement of Theorems \ref{thm: khintchine} and \ref{thm: mult rec},
Bergelson, Host, and Kra \cite{bhk} asked whether, for a measure-preserving system $(X, \mX, \mu, T)$, a set $A \in \mX$ with $\mu(A) > 0$, and $\eps > 0$, the set
\begin{equation*}
	\left\{ n \in \Z : \mu \left( A \cap T^{-n}A \cap \dots \cap T^{-kn}A \right) > \mu(A)^{k+1} - \eps \right\}
\end{equation*}
is syndetic.
They found that the answer depends on the length $(k+1)$ of the arithmetic progression:

\begin{thm}[\cite{bhk}, Theorems 1.2 and 1.3] \label{thm: bhk} ~
	\begin{enumerate}[(1)]
		\item	For any ergodic measure-preserving system $\left(X, \mX, \mu, T \right)$,
			any $A \in \mX$, and any $\eps > 0$, the sets
			\begin{equation*}
				\left\{ n \in \Z : \mu \left( A \cap T^{-n}A \cap T^{-2n}A \right) > \mu(A)^3 - \eps \right\}
			\end{equation*}
			and
			\begin{equation*}
				\left\{ n \in \Z : \mu \left( A \cap T^{-n}A \cap T^{-2n}A \cap T^{-3n}A \right) > \mu(A)^4 - \eps \right\}
			\end{equation*}
			are syndetic.
		\item	There exists an ergodic measure-preserving system $(X, \mX, \mu, T)$ with the following property.
			For any $l \in \N$, there exists $A = A(l) \in \mX$ with $\mu(A) > 0$ such that
			\begin{equation*}
				\mu \left( A \cap T^{-n}A \cap T^{-2n}A \cap T^{-3n}A \cap T^{-4n}A \right) \le \mu(A)^l
			\end{equation*}
			for every $n \ne 0$.
	\end{enumerate}
\end{thm}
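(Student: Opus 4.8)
My plan for part~(1) is to combine the structure theory of characteristic factors for linear multiple averages with an elementary convexity inequality evaluated at the ``identity time,'' and then to upgrade a density statement to syndeticity by exploiting unique ergodicity. I would first isolate the upgrade as a lemma. Let $(Z,m,S)$ be an ergodic rotation on a compact abelian group, or an ergodic nilsystem (in either case $(Z,S)$ is uniquely ergodic); let $\phi\in C(Z)$ and $z^*\in Z$; and let $(a_n)_{n\in\Z}$ be a bounded real sequence such that, for every open $V\ni z^*$ with $m(\partial V)=0$,
\begin{equation*}
	\lim_{L\to\infty}\ \sup_{M\in\Z}\ \left| \frac{1}{L}\sum_{n=M}^{M+L-1}\ind_V(S^n z^*)\,a_n\ -\ \int_V \phi\,dm \right|\ =\ 0 .
\end{equation*}
Then $\{n\in\Z: a_n > \phi(z^*)-\eps\}$ is syndetic for every $\eps>0$. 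To prove the lemma I would argue by contradiction: if this set is not syndetic, its complement contains intervals $I_k=[M_k,M_k+L_k)$ with $L_k\to\infty$ on which $a_n\le\phi(z^*)-\eps$; choosing, by continuity of $\phi$ at $z^*$, a small $V$ with $\tfrac{1}{m(V)}\int_V\phi\,dm>\phi(z^*)-\eps$, and using $\tfrac{1}{L_k}\sum_{n\in I_k}\ind_V(S^nz^*)\to m(V)$ (unique ergodicity of $(Z,S)$), one obtains
\begin{equation*}
	\int_V \phi\,dm\ =\ \lim_{k\to\infty}\frac{1}{L_k}\sum_{n\in I_k}\ind_V(S^nz^*)\,a_n\ \le\ \bigl(\phi(z^*)-\eps\bigr)\,m(V),
\end{equation*}
a contradiction. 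Thus the crux of the whole argument is to establish this ``uniform weighted convergence'' for the correlation sequence in question.

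\textbf{The length-$3$ case.} Given an ergodic system $(X,\mX,\mu,T)$ and $A\in\mX$, I would write $\mathbf{1}_A=g+h$ with $g=\E{\mathbf{1}_A}{\mathcal{Z}_1}$ and $h\perp\mathcal{Z}_1$, realizing the Kronecker factor $\mathcal{Z}_1$ as rotation by $\alpha$ on a compact abelian group $Z$. Expanding the triple product, every term containing exactly one factor $h$ vanishes identically, since after a measure-preserving change of variables it becomes the inner product of $h$ with a product of two $\mathcal{Z}_1$-measurable functions; hence
\begin{equation*}
	\mu\bigl(A\cap T^{-n}A\cap T^{-2n}A\bigr)\ =\ \phi(n\alpha)+e_n,\qquad \phi(t):=\int_Z g(z)\,g(z+t)\,g(z+2t)\,dm_Z(z)\in C(Z),
\end{equation*}
where $e_n$ collects the terms with at least two factors $h$. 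Since $\mathcal{Z}_1$ is characteristic for averages $\tfrac1L\sum_n\int f_0\cdot T^nf_1\cdot T^{2n}f_2\,d\mu$, and this persists uniformly over intervals even after weighting by $\ind_V(n\alpha)$ (one writes $\ind_V$ as a trigonometric polynomial in the eigenvalues of $T$ and absorbs each eigenvalue into the functions), the weighted interval-averages of $e_n$ are negligible, while those of $\phi(n\alpha)$ converge to $\int_V\phi\,dm_Z$ by unique ergodicity of the rotation; this is exactly the hypothesis of the lemma with $z^*=0$. Since $\phi(0)=\int_Z g^3\,dm_Z\ge\bigl(\int_Z g\,dm_Z\bigr)^3=\mu(A)^3$ by convexity of $t\mapsto t^3$, the lemma yields that $\{n:\mu(A\cap T^{-n}A\cap T^{-2n}A)>\mu(A)^3-\eps\}$ is syndetic, and likewise for the length-$4$ pattern once that case is settled.

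\textbf{The length-$4$ case, and the main obstacle.} Here I would run the same argument with $\mathcal{Z}_1$ replaced by the second Host--Kra factor $\mathcal{Z}_2$, an inverse limit of ergodic $2$-step nilsystems. After projecting $\E{\mathbf{1}_A}{\mathcal{Z}_2}$ to a single level of the inverse limit (a small $L^1$-perturbation of the correlation), I may assume $X$ is a $2$-step nilsystem; then, using that correlation sequences of nilsystems are nilsequences, $n\mapsto\mu(A\cap T^{-n}A\cap T^{-2n}A\cap T^{-3n}A)$ agrees up to a uniformly negligible (and weightable) error with a $2$-step nilsequence $n\mapsto F(\tilde a^{\,n}w^*)$ on an auxiliary ergodic (hence uniquely ergodic) nilsystem $(W,\tilde a)$, and $F(w^*)$ is the value at $n=0$, namely $\int_X\bigl(\E{\mathbf{1}_A}{\mathcal{Z}_2}\bigr)^4 d\mu\ge\mu(A)^4$, again by convexity. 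The lemma with $Z=W$, $z^*=w^*$ then gives syndeticity of $\{n:\mu(A\cap T^{-n}A\cap T^{-2n}A\cap T^{-3n}A)>\mu(A)^4-\eps\}$. I expect this length-$4$ case to be the main obstacle: verifying the ``uniform weighted convergence'' hypothesis now requires the Host--Kra--Ziegler structure theorem, the identification of nilpotent correlation sequences with nilsequences, Leibman-type equidistribution of polynomial orbits on nilmanifolds, and unique ergodicity of ergodic nilsystems, whereas for length $3$ only the van der Corput inequality and unique ergodicity of a group rotation are needed.

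\textbf{The counterexample for longer progressions.} For part~(2) I would reverse-engineer an example from Behrend's construction of subsets of $\{1,\dots,N\}$ of density $\exp(-O(\sqrt{\log N}))$ containing no nontrivial $3$-term progression. Fixing a suitable $2$-step nilsystem $(X,\mX,\mu,T)$ and, for each $l$, a set $A=A(l)$ of small positive measure concentrated near an appropriate subvariety and modeled on a Behrend set, one arranges that $x,T^nx,T^{2n}x,T^{3n}x,T^{4n}x\in A$ forces a $3$-term configuration inside the Behrend set, hence a degenerate one, which confines the admissible $n$ to a set of relative measure super-polynomially smaller than $\mu(A)$; this gives $\mu(A\cap T^{-n}A\cap\cdots\cap T^{-4n}A)\le\mu(A)^l$ for all $n\ne0$. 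The dividing line is that large intersections for progressions of length $k+1$ are governed by step-$(k-1)$ structure, and the relevant positivity of simplex configurations survives for $k+1\le4$ but fails once $k+1\ge5$.
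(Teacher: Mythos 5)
The paper does not prove this theorem; it quotes it from Bergelson--Host--Kra~\cite{bhk} (their Theorems~1.2 and~1.3) purely as motivating background, with no proof given. So your proposal should be compared against the BHK argument itself, and it does in fact follow the same outline: reduce to a characteristic factor, decompose the correlation sequence as a continuous function on the factor plus error, apply a convexity inequality at the identity, and upgrade a Ces\`aro statement to syndeticity by weighting by approximate indicators of Bohr sets and appealing to unique ergodicity. In the length-$3$ case your computations are correct: the single-$h$ terms do vanish after translating so the isolated $h$ is the undilated factor; $\phi(0)=\int_Z g^3\,dm_Z\ge\mu(A)^3$ by Jensen since $g\ge0$; and the device of absorbing eigenvalue weights into the functions (exploiting that multiplication by an eigenfunction preserves orthogonality to $\mathcal{Z}_1$) is the right trick. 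The real gap is where you gesture but do not prove: the ``uniform weighted convergence'' hypothesis of your lemma. This is not a corollary of the usual Ces\`aro characteristic-factor statement. Establishing it requires, beyond the eigenvalue-absorption you state, both a uniform (supremum over $M$) van der Corput reduction for the error terms and a controlled approximation of the discontinuous $\ind_V$ by trigonometric polynomials whose $L^1$ error is negligible uniformly in $M$. These constitute the technical heart of BHK's paper and cannot simply be asserted. Similarly, for length $4$ your sketch correctly isolates the Conze--Lesigne factor $\mathcal{Z}_2$ and the Jensen bound $\int(\E{\mathbf{1}_A}{\mathcal{Z}_2})^4\ge\mu(A)^4$, but the nilsequence approximation, Leibman equidistribution, and their compatibility with the weighting argument are each major theorems that the proposal treats in one sentence; as you note, this is where the bulk of the work lies.

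For part~(2) the construction you sketch lies in the right family (a $2$-step skew product over a rotation, with $A$ built from a Behrend set in the fiber), but the heuristic you offer for the dividing line is misleading. You say the large intersections property for $(k+1)$-term progressions is governed by step-$(k-1)$ structure, implying the failure at $k+1=5$ is caused by passing to step-$3$ systems. But BHK's counterexample is itself a step-$2$ system --- the same nilpotency order as the characteristic factor for length~$4$, where the theorem \emph{does} hold. The obstruction at length $5$ is a quantitative and combinatorial phenomenon: five quadratically spaced points in the fiber impose a relation that a Behrend-type set can evade except for a sparse set of common differences $n$, making $\mu(A\cap T^{-n}A\cap\cdots\cap T^{-4n}A)$ superpolynomially smaller than $\mu(A)$. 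The reason the convexity argument survives exactly through $k=3$ and not $k=4$ is delicate and is not captured by step-counting alone.
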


\begin{rem}
	The ergodicity assumption in item (1) cannot be dropped.
	An adaptation of Behrend's construction of sets avoiding 3-term arithmetic progressions \cite{behrend}
	can be used to produce a counterexample for the non-ergodic transformation $T(x,y) = (x, y+x)$ on the 2-torus $\T^2$;
	see \cite[Theorem 2.1]{bhk}.
\end{rem}

The combinatorial content of Theorem \ref{thm: bhk}(1) is expressed by the following closely related result:

\begin{thm}[\cite{green}, Theorem 1.10; \cite{gt}, Theorem 1.12] \label{thm: gt}
	Let $\alpha, \eps > 0$.
	\begin{enumerate}
		\item	There exists $N_3 = N_3(\alpha, \eps) \in \N$ such that
			if $N \ge N_3$ and $A \subseteq \{1, \dots, N\}$ has cardinality $|A| \ge \alpha N$,
			then there exists $d \in \N$ such that
			\begin{equation*}
				\left| \left\{ a \in \N : \{a, a+d, a+2d\} \subseteq A \right\} \right| > (\alpha^3 - \eps) N.
			\end{equation*}
		\item	There exists $N_4 = N_4(\alpha, \eps) \in \N$ such that
			if $N \ge N_4$ and $A \subseteq \{1, \dots, N\}$ has cardinality $|A| \ge \alpha N$,
			then there exists $d \in \N$ such that
			\begin{equation*}
				\left| \left\{ a \in \N : \{a, a+d, a+2d, a+3d\} \subseteq A \right\} \right| > (\alpha^4 - \eps) N.
			\end{equation*}
	\end{enumerate}
\end{thm}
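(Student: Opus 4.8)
The plan is to follow the higher-order Fourier strategy that parallels the ergodic argument behind Theorem~\ref{thm: bhk}(1), proving the two parts in tandem; I close with a remark on a softer route via the correspondence principle. After the standard reduction to the cyclic group $\Z_{N'}$ with $N'$ prime, $N' \asymp N$ (the wraparound and the single $d = 0$ term costing only $o_{N\to\infty}(1)$, which is absorbed into $\eps$), set $f := \ind_A$, a $[0,1]$-valued function on $\Z_{N'}$ with $\mathbb{E} f \ge \alpha$. For part (1), apply Green's arithmetic regularity lemma for finite abelian groups (the Fourier, or $U^2$, version): write $f = f_1 + f_2 + f_3$ with $f_1 = \E{f}{\B}$ for a Bohr $\sigma$-algebra $\B$ generated by $k = k(\alpha,\eps)$ characters at resolution $\rho = \rho(\alpha,\eps)$, so that $0 \le f_1 \le 1$ and $\mathbb{E} f_1 = \mathbb{E} f$, while $\norm{2}{f_2} \le \delta$ and $\norm{U^2}{f_3} \le \eta$ with $\delta, \eta$ as small as we wish. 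Put $\Lambda_d(g) := \mathbb{E}_x\, g(x) g(x+d) g(x+2d)$ and let $B \subseteq \Z_{N'}$ be the set of $d$ with $\|d\theta\| \le \rho'$ for every Bohr frequency $\theta$, where $\rho' \asymp \eps\rho/k$ is small enough that $f_1(x+d) = f_1(x) = f_1(x+2d)$ for all $x$ outside a set of density $\le \eps/10$ --- take the Bohr cells to be products of arcs of length $\asymp\rho$ and discard the $O(k\rho'/\rho)$-fraction of $x$ within $\rho'$ of an arc endpoint --- and note $|B|/N' \ge \beta(\alpha,\eps) > 0$. Averaging $\Lambda_d(f)$ over $d \in B$ and expanding multilinearly: the generalized von Neumann inequality (applied with a smooth Bohr cutoff in the $d$ variable) bounds every term carrying a factor $f_3$ by $O_{\alpha,\eps}(\eta)$, and a short Fourier computation --- using that $f_1$ is well approximated by a trigonometric polynomial with $O_{\alpha,\eps}(1)$ frequencies --- bounds every term carrying $f_2$ but not $f_3$ by $O_{\alpha,\eps}(\delta)$; hence $\mathbb{E}_{d\in B}\Lambda_d(f) = \mathbb{E}_{d\in B}\Lambda_d(f_1) + E$ with $E \to 0$ as $\delta, \eta \to 0$. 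Finally, for $d \in B$ replace $f_1(x+d), f_1(x+2d)$ by $f_1(x)$ at the cost of the $\eps/10$-boundary, so
\begin{equation*}
	\mathbb{E}_{d\in B}\Lambda_d(f_1) \;\ge\; \mathbb{E}_x\, f_1(x)^3 - \tfrac{\eps}{3} \;\ge\; \bigl(\mathbb{E}_x\, f_1(x)\bigr)^3 - \tfrac{\eps}{3} \;=\; (\mathbb{E} f)^3 - \tfrac{\eps}{3},
\end{equation*}
the middle inequality being Jensen for the convex map $t \mapsto t^3$ on $[0,\infty)$ --- this is exactly what ties the exponent to the length $3$. Choosing $\delta, \eta$ small gives $\mathbb{E}_{d\in B}\Lambda_d(f) \ge \alpha^3 - \eps/2$, so by pigeonhole some $d \in B \setminus \{0\}$ satisfies $\Lambda_d(f) \ge \alpha^3 - \eps/2$; undoing the reduction to $\Z_{N'}$ gives part (1) with $N_3 = N_3(\alpha,\eps)$.

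Part (2) is identical in structure, with Green's Fourier regularity lemma replaced by the Green--Tao quadratic ($U^3$) arithmetic regularity lemma: now $f_1(n) = F(g(n)\Gamma)$ is a degree-$\le 2$ nilsequence, $F$ Lipschitz of norm $\le M(\alpha,\eps)$ on a filtered nilmanifold $G/\Gamma$ of dimension $\le m(\alpha,\eps)$ and $g$ a polynomial sequence, with $0 \le f_1 \le 1$ and $\mathbb{E} f_1 = \mathbb{E} f$, while $\norm{2}{f_2} \le \delta$ and $\norm{U^3}{f_3} \le \eta$. The generalized von Neumann inequality for four-term progressions shows that $\|\cdot\|_{U^3}$ controls $\Lambda^{(4)}_d(g) := \mathbb{E}_x\, g(x) g(x+d) g(x+2d) g(x+3d)$, so everything again reduces to understanding $\mathbb{E}_{d\in B}\Lambda^{(4)}_d(f_1)$ for a suitable positive-density set $B$ of common differences: if $F(g(n+jd)\Gamma) \approx F(g(n)\Gamma)$ uniformly in $n$ for $j = 1,2,3$ and $d \in B$, then $\mathbb{E}_{d\in B}\Lambda^{(4)}_d(f_1) \ge \mathbb{E}_n\, f_1(n)^4 - o(1) \ge (\mathbb{E} f)^4 - o(1) \ge \alpha^4 - o(1)$ by Jensen for $t \mapsto t^4$, and we conclude as before.

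The hard part is precisely this last reduction in the quadratic regime, which is the content of the Green--Tao counting lemma. There is in general \emph{no} positive-density set $B$ with $g(n+d)\Gamma$ literally close to $g(n)\Gamma$ for all $n$: a degree-$2$ polynomial sequence satisfies $g(n+d) = g(n)\,\gamma_1(d)^n\gamma_0(d)$ up to lower-order terms with a genuine linear-in-$n$ twist $\gamma_1(d)$ in the centre of $G$, and $n \mapsto \gamma_1(d)^n$ cannot be uniformly small unless $\gamma_1(d)$ is essentially trivial. The fix is to analyse the joint orbit $n \mapsto \bigl(g(n), g(n+d), g(n+2d), g(n+3d)\bigr)$ in $(G/\Gamma)^4$: by the quantitative Leibman equidistribution theorem it equidistributes on an explicit subnilmanifold $H_d$ containing the diagonal, and for $d$ in a positive-density nil-Bohr set $H_d$ is the diagonal twisted only in directions that die upon projecting modulo the last filtration step --- which still suffices for the convexity estimate at length $4$. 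This is exactly the mechanism that breaks down at length $5$, consistently with Theorem~\ref{thm: bhk}(2): there the analogous subnilmanifold is too small, and the averaged count genuinely dips below $\alpha^5$. One must also replace the Lipschitz $F$ by functions constant on a fine small-boundary partition of $G/\Gamma$ and bound the boundary layer via equidistribution of $g(n)\Gamma$, which is routine but bookkeeping-heavy. (Alternatively, one might hope to deduce Theorem~\ref{thm: gt} directly from Theorem~\ref{thm: bhk}(1): a sequence of counterexamples would, under the Furstenberg correspondence, produce a $\Z$-system and a set of measure $\ge \alpha$ with $\mu(A \cap T^{-n}A \cap T^{-2n}A) \le \alpha^3 - \eps$ for all $n \ne 0$. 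But the resulting system need not be ergodic, and --- as the Remark following Theorem~\ref{thm: bhk} shows --- the conclusion genuinely fails without ergodicity, so the naive ergodic decomposition destroys the counterexample; making this route work requires extra care, e.g.\ arranging the correspondence to output an ergodic system. The direct regularity-lemma argument is cleaner.)
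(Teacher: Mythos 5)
This is a quoted result (\cite[Theorem 1.10]{green} and \cite[Theorem 1.12]{gt}); the paper states it without proof, so there is no internal argument to compare your sketch against. Your treatment of part (1) is a sound compression of Green's proof: $U^2$ arithmetic regularity, restriction of the common difference $d$ to a Bohr set on which the structured part $f_1$ is nearly stationary, and the pointwise Jensen inequality $\mathbb{E}\, f_1^3 \ge (\mathbb{E}\, f_1)^3$ for nonnegative $f_1$. That part checks out.

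Part (2) has a genuine gap at the step you label ``which still suffices for the convexity estimate at length~$4$.'' You correctly identify the obstruction --- the degree-$2$ twist $\gamma_1(d)^n$ means a degree-$2$ nilsequence admits no positive-density set of differences on which it is almost periodic, so the count $\mathbb{E}_x \prod_{j=0}^{3} f_1(x+jd)$ cannot be reduced to $\mathbb{E}_x f_1(x)^4$ --- but the assertion that the residual $G_2$-twist in the subnilmanifold $H_d$ is harmless is not itself an argument. Those central directions do not drop out of the count, and Jensen for $t \mapsto t^4$ simply does not apply. The actual engine in Green--Tao, mirroring the ergodic argument of \cite{bhk}, is a positivity identity special to $4$-term progressions on $2$-step nilmanifolds: after the counting lemma, the relevant integral over the Leibman orbit closure can be rewritten in a Gowers-box-norm-like form (an integral of a square, obtained by repeated Cauchy--Schwarz), and it is this square structure --- not convexity --- that produces $\alpha^4$. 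Naming that lemma is not bookkeeping; it is the mathematical content of part~(2), and the lack of such a square structure at length~$5$ is exactly what underlies Theorem~\ref{thm: bhk}(2). Your closing caveat, that the naive Furstenberg correspondence does not recover Theorem~\ref{thm: gt} from Theorem~\ref{thm: bhk} because the resulting system need not be ergodic, is correct and consistent with the paper's own remark that no simple deduction between the finitary and ergodic statements is known in either direction.
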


\begin{rem}
	(1) The positive integers $d \in \N$ appearing in Theorem \ref{thm: gt} are sometimes referred to as
	\emph{popular differences}, since they are common differences for many arithmetic progressions contained in $A$.
	
	(2) Theorem \ref{thm: bhk} can be converted directly into a combinatorial statement involving sets of positive upper Banach density by a version of the Furstenberg correspondence principle that produces ergodic measure-preserving systems; see \cite[Section 1.2]{bhk}.
	However, no simple argument is known to deduce Theorem \ref{thm: gt} from Theorem \ref{thm: bhk} or \emph{vice versa}.
\end{rem}

In other contexts in which a multiple recurrence result is known,
one may again ask whether it is possible to find a corresponding Khintchine-type enhancement.
Pursuing this line of inquiry, Bergelson, Tao, and Ziegler \cite{btz2} established a Khintchine-type recurrence result
for actions of the group $\F_p^{\infty}$:

\begin{thm}[\cite{btz2}, Theorems 1.12 and 1.13] \label{thm: btz} ~
	Fix a prime $p$ and $a, b \in \F_p$.
	For any ergodic measure-preserving $\F_p^{\infty}$-system $\left( X, \mX, \mu, (T_g)_{g \in \F_p^{\infty}} \right)$,
	any $A \in \mX$, and any $\eps > 0$, the sets
	\begin{equation*}
		\left\{ g \in \F_p^{\infty} : \mu \left( A \cap T_{ag}^{-1}A \cap T_{bg}^{-1}A \right) > \mu(A)^3 - \eps \right\}
	\end{equation*}
	and
	\begin{equation*}
		\left\{ g \in \F_p^{\infty} : \mu \left( A \cap T_{ag}^{-1}A \cap T_{bg}^{-1}A \cap T_{(a+b)g}^{-1}A \right) > \mu(A)^4 - \eps \right\}
	\end{equation*}
	are syndetic.
\end{thm}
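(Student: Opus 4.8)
The plan is to adapt the method of Bergelson--Host--Kra underlying Theorem~\ref{thm: bhk} to the group $\F_p^{\infty}$, exploiting that the relevant structure theory is far more rigid over $\F_p^{\infty}$ than over $\Z$: in particular the Kronecker factor of an $\F_p^{\infty}$-system is \emph{profinite}, so it carries an abundance of finite-index --- hence syndetic --- subgroups. I describe the argument for the $3$-point pattern $(0,ag,bg)$ in detail; the $4$-point pattern $(0,ag,bg,(a+b)g)$ --- the ``$2$-dimensional cube'' $\{0,a,b,a+b\}$ --- runs through the same machinery and is addressed at the end. If one of $a$, $b$, $a-b$ vanishes, then after an invertible linear change of the parameter the $3$-point expression reduces to $\mu(A\cap T_{cg}^{-1}A)$ for a fixed $c\in\F_p$, and since $\mu(A)^3-\eps<\mu(A)^2-\eps$ the set in question contains the syndetic set furnished by Khintchine's theorem (Theorem~\ref{thm: khintchine}) for the $\F_p^{\infty}$-action; the same remark disposes of the degenerate cases of the $4$-point statement. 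So assume $a,b,a-b\in\F_p\setminus\{0\}$, whence $p\ge3$.

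\textbf{Step 1: reduction to a Conze--Lesigne factor.} By the structure theory of multiple ergodic averages over $\F_p^{\infty}$, the averages $\frac{1}{|V|}\sum_{g\in V}\int_X f_0\cdot(f_1\circ T_{ag})\cdot(f_2\circ T_{bg})\,d\mu$, taken along an exhaustion of $\F_p^{\infty}$ by finite subgroups $V$, are controlled by the quasi-affine (Conze--Lesigne) factor $\mathcal Z_2=\mathcal Z_2(X)$. Replacing $X$ by $\mathcal Z_2$, I may assume $X$ is a Conze--Lesigne system, and over $\F_p^{\infty}$ (with $p\ge3$) every such system is an inverse limit of systems of the concrete form $X=Z\times U$ with Haar measure, where $Z$ is a \emph{finite} elementary abelian $p$-group on which $\F_p^{\infty}$ acts by translation through a surjective homomorphism $\alpha\colon\F_p^{\infty}\to Z$, $U$ is a compact abelian group, and
\begin{equation*}
	T_g(z,u)=\bigl(z+\alpha(g),\ u+\rho_g(z)\bigr)
\end{equation*}
for a cocycle $\rho$ whose discrete derivatives $z\mapsto\rho_g(z+\alpha(h))-\rho_g(z)$ are quasi-affine functions of $z$ (the Conze--Lesigne equation). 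A standard limiting argument, with bounds uniform in the approximating system, reduces the theorem to a single system of this type.

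\textbf{Step 2: killing the linear part and vertical Fourier analysis.} Since $Z$ is finite, $S_0:=\ker\alpha$ has index $|Z|$ in $\F_p^{\infty}$, so it is syndetic; and for $g\in S_0$ we have $\alpha(ag)=a\,\alpha(g)=0$ and likewise $\alpha(bg)=0$, so $T_{ag}$ and $T_{bg}$ preserve every fibre $\{z\}\times U$, acting there by translation by $\sigma_g(z):=\rho_{ag}(z)$, resp.\ $\tau_g(z):=\rho_{bg}(z)$. Writing $f:=\ind_A$ and decomposing $f=\sum_{\chi\in\hat{U}}f_\chi$ into vertical Fourier modes $f_\chi(z,u)=\hat f_\chi(z)\,\chi(u)$, one obtains for $g\in S_0$
\begin{equation*}
	\int_X f\cdot(f\circ T_{ag})\cdot(f\circ T_{bg})\,d\mu=\sum_{\chi,\chi'\in\hat{U}}\int_Z\overline{\hat f_{\chi+\chi'}(z)}\;\hat f_\chi(z)\;\hat f_{\chi'}(z)\;\chi\bigl(\sigma_g(z)\bigr)\,\chi'\bigl(\tau_g(z)\bigr)\,dz .
\end{equation*}
The Conze--Lesigne equation forces each phase $z\mapsto\chi(\sigma_g(z))$ to equal a character of $Z$ (depending on $g$ and $\chi$) times a bounded function absorbable into an $L^2$-error of size $\eps$; as only finitely many $\chi$ carry all but $\eps$ of $f$, intersecting $S_0$ with a further finite-index subgroup $S_1$ makes all relevant phases \emph{constant in $z$}. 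Since $\rho$ is a cocycle and $\alpha$ vanishes on $S_0$, the resulting quantities $c_\chi(g):=\chi(\sigma_g(z))$ and $c'_{\chi'}(g):=\chi'(\tau_g(z))$ are characters of $S_1$.

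\textbf{Step 3 (the main obstacle): positivity, syndeticity, and Jensen.} Over $S_1$ the expression above becomes $\Phi(g)=\sum_{\chi,\chi'}c_\chi(g)\,c'_{\chi'}(g)\,M_{\chi,\chi'}$, with $M_{\chi,\chi'}=\int_Z\overline{\hat f_{\chi+\chi'}}\,\hat f_\chi\,\hat f_{\chi'}\,dz$, and the task is to produce a \emph{syndetic} set of $g$ on which $\Phi(g)>M_{0,0}-\eps$, where $M_{0,0}=\int_Z\tilde f^{3}\,dz$ and $\tilde f:=\E{f}{\mathcal Z_1}$ is the conditional expectation onto the Kronecker factor. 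This is the heart of the matter, and it is where the Bergelson--Host--Kra positivity argument enters: the Conze--Lesigne equation links the phases $c_\chi,c'_{\chi'}$ to the Fourier data $\hat f_\chi$ in precisely the way needed to bound $\Phi$ from below, on a syndetic subset of $S_1$, by a quantity arbitrarily close to $M_{0,0}$ (equivalently, the relevant spectral data is positive, and Bochner's theorem converts this into syndeticity). Since $\tilde f\ge0$, $\int_{\mathcal Z_1}\tilde f=\mu(A)$, and $t\mapsto t^3$ is convex on $[0,\infty)$, Jensen's inequality gives $M_{0,0}\ge\mu(A)^3$; unwinding Steps~1--2 (each costing $O(\eps)$) then shows that $\{g:\mu(A\cap T_{ag}^{-1}A\cap T_{bg}^{-1}A)>\mu(A)^3-\eps\}$ contains a syndetic set. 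For the $4$-point pattern the same scheme applies verbatim: it too is controlled by $\mathcal Z_2$, the vertical-Fourier identity now carries a triple sum over $\chi,\chi',\chi''$ with $\chi+\chi'+\chi''$ coupled, and the positivity argument delivers the bound $\int_Z\tilde f^{4}\,dz\ge\mu(A)^4$. The main obstacle throughout is Step~3 --- extracting a genuinely \emph{syndetic} (not merely positive-density, nor merely F\o lner-large) set of shifts on which \emph{all} of the vertical phases align --- and it is exactly this step that has no analogue for patterns of higher complexity, in keeping with Theorem~\ref{thm: bhk}(2).
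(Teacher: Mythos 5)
The paper does not actually prove Theorem~\ref{thm: btz}; it is quoted from Bergelson--Tao--Ziegler as motivation, and the paper's own contribution is the generalization Theorem~\ref{thm: main} (for the 3-point configuration), proved via a quite different route. Your proposal is therefore a blind reconstruction of the BTZ-style argument rather than something directly comparable to a proof in this paper, and it has substantive gaps.

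The most serious gap is that Step~3, which you yourself flag as ``the heart of the matter,'' is not a proof --- it is a pointer to where a proof should go. You write that the Conze--Lesigne equation ``links the phases $c_\chi, c'_{\chi'}$ to the Fourier data $\hat f_\chi$ in precisely the way needed to bound $\Phi$ from below'' and that ``Bochner's theorem converts this into syndeticity,'' but no mechanism is given for either claim. Concretely: after restricting to $S_1$, the expression $\Phi(g)=\sum_{\chi,\chi'}c_\chi(g)\,c'_{\chi'}(g)\,M_{\chi,\chi'}$ is a trigonometric polynomial on $S_1$ whose mean over $S_1$ is $\sum_{\{(\chi,\chi'):\ c_\chi c'_{\chi'}=1\}} M_{\chi,\chi'}$, and one needs to rule out cancellation from the off-diagonal terms $(\chi,\chi')\neq(0,0)$ with $c_\chi c'_{\chi'}=1$. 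This is exactly the nontrivial point, and it does not follow from convexity of $t\mapsto t^3$ alone. In the paper's generalizing argument the corresponding difficulty is handled by passing to an extension in which the Mackey group $M\leq H^2$ splits as $M_\varphi\times M_\psi$ (Theorem~\ref{thm: Mackey prod}), whereupon the limiting integral becomes $\int f\cdot\E{f}{\mW_\varphi\vee\mI_\varphi}\cdot\E{f}{\mW_\psi\vee\mI_\psi}$ and Chu's lemma \cite[Lemma~1.6]{chu} gives the lower bound $\mu(A)^3$. Your sketch has no analogue of the Mackey-splitting or of Chu's lemma, and simply asserting ``positivity'' does not substitute for them.

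A second gap is the reduction to finite Kronecker factor $Z$ in Steps~1--2. The Kronecker factor of an ergodic $\F_p^\infty$-system is profinite but typically \emph{infinite}, and when it is infinite the kernel $\ker\alpha$ is not syndetic (indeed $\alpha$ may be injective), so the restriction to $S_0=\ker\alpha$ is unavailable in general. You invoke a ``standard limiting argument, with bounds uniform in the approximating system,'' but no such argument is supplied, and it is not obvious: the Conze--Lesigne cocycle $\rho$ need not factor (even up to cohomology) through the finite quotients of $Z$, so it is unclear that the CL factor is an inverse limit of CL systems with finite Kronecker factor in the strong sense you use. By contrast, the paper's method (Theorem~\ref{thm: averaging form}) never restricts $g$ to a subgroup of $\Gamma$; it instead weights the uniform Ces\`aro average by a continuous $\kappa:Z\to[0,\infty)$ concentrated near $0$ in $Z$, uses unique ergodicity of $(Z,\alpha)$ to evaluate the weighted limit as an integral (Corollary~\ref{cor: twisted limit}), and recovers syndeticity at the end from the non-vanishing of the weighted limit via \cite[Lemma~1.9]{abb}. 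This works for any compact abelian Kronecker factor, not only finite ones, and is one reason the paper's technique extends beyond $\F_p^\infty$.

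In summary, your outline captures the broad shape of a Fourier-analytic attack on the CL factor, and the vertical-mode identity in Step~2 is correct, but the crucial positivity-and-syndeticity step is missing, and the finiteness reduction it relies on is unjustified. The paper's own machinery (discrete-spectrum extensions via Theorem~\ref{thm: complete divisible extension}, Mackey-group splitting, the weighted limit formula, Chu's lemma) solves both of these problems and in doing so dispenses with the $\F_p^\infty$-specific structure entirely; that is precisely what makes Theorem~\ref{thm: main} a genuine generalization rather than a repackaging of the BTZ proof.
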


A finitary combinatorial analogue of Theorem \ref{thm: btz} along the lines of Theorem \ref{thm: gt}
can be deduced using the methods established in \cite{green} and \cite{gt}, which in fact apply to general finite abelian groups.
See also \cite[Lecture 4]{green-lectures}. \\

The most general multiple recurrence result with which we will concern ourselves is the following,
which can be seen as a consequence of \cite{fk-IP} or \cite[Theorem B]{austin}:

\begin{thm} \label{thm: endomorphism Szemeredi}
	Let $\Gamma$ be a countable discrete abelian group.
	Let $k \in \N$, and let $\varphi_1, \dots, \varphi_k \in \End(\Gamma)$.
	For any measure-preserving $\Gamma$-system $\left( X, \mX, \mu, (T_g)_{g \in \Gamma} \right)$
	and any set $A \in \mX$, the set
	\begin{equation*}
		\left\{ g \in \Gamma : \mu \left( A \cap T_{\varphi_1(g)}^{-1}A \cap \dots \cap T_{\varphi_k(g)}^{-1}A \right) > 0 \right\}
	\end{equation*}
	is syndetic.
\end{thm}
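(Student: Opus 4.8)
The plan is to prove the stronger statement that the return set
\[
	R := \left\{ g \in \Gamma : \mu\left( A \cap T_{\varphi_1(g)}^{-1}A \cap \dots \cap T_{\varphi_k(g)}^{-1}A \right) > 0 \right\}
\]
(assuming, as we may, that $\mu(A) > 0$) is an $\text{IP}^*$ subset of $\Gamma$, and then to deduce syndeticity from this. Recall that $S \subseteq \Gamma$ is $\text{IP}^*$ if it meets $\mathrm{FS}\left((h_n)_{n \in \N}\right) := \left\{ h_{n_1} + \dots + h_{n_j} : j \ge 1,\ n_1 < \dots < n_j \right\}$ for every sequence $(h_n)_{n \in \N}$ in $\Gamma$. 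The first step is the standard and elementary fact that an $\text{IP}^*$ subset of a countable abelian group is syndetic: if $S$ is not syndetic, then $\Gamma \setminus S$ is \emph{thick}, meaning it contains a translate of every finite subset of $\Gamma$, and a routine recursion then produces a sequence $(h_n)$ with $\mathrm{FS}\left((h_n)\right) \subseteq \Gamma \setminus S$, contradicting $S \in \text{IP}^*$. So it suffices to show $R \in \text{IP}^*$.

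For this, I would fix an arbitrary sequence $(h_n)_{n \in \N}$ in $\Gamma$ and, for each finite nonempty $\alpha \subseteq \N$, set $h_\alpha := \sum_{n \in \alpha} h_n$, so that $h_{\alpha \cup \beta} = h_\alpha + h_\beta$ whenever $\alpha \cap \beta = \es$; in other words, $(h_\alpha)_\alpha$ is an \emph{IP-system} in $\Gamma$. The only point at which the endomorphism hypothesis enters is the trivial observation that homomorphisms carry IP-systems to IP-systems: for each $i \in \{1, \dots, k\}$, since $\varphi_i \in \End(\Gamma)$, the family $(\varphi_i(h_\alpha))_\alpha$ is an IP-system in $\Gamma$, and hence $\left(T_{\varphi_i(h_\alpha)}\right)_\alpha$ is an IP-system of measure-preserving transformations of $(X, \mX, \mu)$, that is, $T_{\varphi_i(h_{\alpha \cup \beta})} = T_{\varphi_i(h_\alpha)} T_{\varphi_i(h_\beta)}$ for disjoint $\alpha, \beta$. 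Moreover these $k$ IP-systems commute with one another, because $\Gamma$ is abelian and so $\{T_g : g \in \Gamma\}$ is a commuting family.

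I would then apply the Furstenberg--Katznelson $\text{IP}$ Szemer\'edi theorem \cite{fk-IP} --- or, alternatively, \cite[Theorem B]{austin} --- to the commuting IP-systems $\left(T_{\varphi_1(h_\alpha)}\right)_\alpha, \dots, \left(T_{\varphi_k(h_\alpha)}\right)_\alpha$ and the set $A$ with $\mu(A) > 0$: it furnishes a finite nonempty $\alpha \subseteq \N$ with
\[
	\mu\left( A \cap T_{\varphi_1(h_\alpha)}^{-1}A \cap \dots \cap T_{\varphi_k(h_\alpha)}^{-1}A \right) > 0,
\]
that is, $h_\alpha \in R \cap \mathrm{FS}\left((h_n)\right)$. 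As $(h_n)$ was arbitrary, $R \in \text{IP}^*$, and by the first step $R$ is syndetic, as claimed.

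The entire difficulty is concentrated in the black-boxed input, the multiple recurrence theorem of Furstenberg--Katznelson or of Austin, which is itself a deep result; the surrounding argument is soft. In particular, the passage from the classical setting of ``multiplication by $i$'' (or, over $\Z^d$, of linear forms) to arbitrary endomorphisms costs nothing: it uses only that group homomorphisms respect the $\text{IP}$-structure of $\Gamma$. It is also worth emphasizing that routing the argument through the $\text{IP}$ machinery is what makes the syndeticity conclusion painless --- a softer ingredient, such as positivity of a Ces\`aro average of the correlations along a single F\o lner sequence, would only yield that $R$ has positive lower density along that sequence, which does not by itself imply syndeticity, whereas the $\text{IP}$-refinement supplies the formally stronger $\text{IP}^*$ property directly.
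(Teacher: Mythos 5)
Your proof is correct and follows the route the paper itself suggests: the paper does not supply a proof of Theorem \ref{thm: endomorphism Szemeredi}, stating only that it ``can be seen as a consequence of'' \cite{fk-IP} or \cite[Theorem B]{austin}, and you carry out exactly the deduction from the Furstenberg--Katznelson IP Szemer\'edi theorem --- pushing the IP-structure through the endomorphisms $\varphi_i$, invoking the IP multiple recurrence theorem for the resulting commuting IP-systems, and passing from $\text{IP}^*$ to syndetic. The only discrepancy is that the paper's statement omits the (necessary, and surely intended) hypothesis $\mu(A) > 0$, which you correctly restore.
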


\begin{rem}
	When dealing with topological groups, one may wish to impose various continuity assumptions (for instance, on the endomorphisms $\varphi_1, \dots, \varphi_k$ or on the action of $\Gamma$ on $(X, \mX, \mu)$).
	Moreover, notions of largeness for subsets of $\Gamma$ such as syndeticity and upper Banach density (discussed below in Section \ref{sec: combinatorics}) depend on the topology on $\Gamma$.
	We assume that $\Gamma$ is discrete in order to avoid such topological issues.
\end{rem}

The foregoing discussion motivates the following general definition:

\begin{defn}
	Let $\Gamma$ be a countable discrete abelian group.
	A family of endomorphisms $\varphi_1, \dots, \varphi_k \in \End(\Gamma)$ has the \emph{large intersections property}
	if the following holds:
	for any ergodic measure-preserving $\Gamma$-system $\left( X, \mX, \mu, (T_g)_{g \in \Gamma} \right)$,
	any $A \in \mX$ and any $\eps > 0$, the set
	\begin{equation*}
		\left\{ g \in \Gamma : \mu \left( A \cap T_{\varphi_1(g)}^{-1}A \cap \dots \cap T_{\varphi_k(g)}^{-1}A \right) > \mu(A)^{k+1} - \eps \right\}
	\end{equation*}
	is syndetic.
\end{defn}

We now give a brief summary of the previously known results about the large intersections property
in general countable discrete abelian groups.

In \cite{abb}, a far-reaching generalization of Theorems \ref{thm: bhk} and \ref{thm: btz}
for configurations of length 3 and 4 was obtained (in a slight abuse of notation,
we abbreviate a family of endomorphisms of the form $\{g \mapsto a_1 g, \dots, g \mapsto a_k g\}$ by $\{a_1, \dots, a_k\}$):

\begin{thm}[\cite{abb}, Theorems 1.10 and 1.11] \label{thm: abb}
	Let $\Gamma$ be a countable discrete abelian group.
	\begin{enumerate}[(1)]
		\item	If $\varphi, \psi \in \End(\Gamma)$ are such that all three subgroups
			$\varphi(\Gamma)$, $\psi(\Gamma)$, and $(\psi - \varphi)(\Gamma)$ have finite index
			in $\Gamma$, then $\{\varphi, \psi\}$ has the large intersections property.
		\item	If $a, b \in \Z$ and all four subgroups $a\Gamma$, $b\Gamma$, $(a+b)\Gamma$, and $(b-a)\Gamma$
			have finite index in $\Gamma$, then $\{a, b, a+b\}$ has the large intersections property.
	\end{enumerate}
\end{thm}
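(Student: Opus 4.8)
The plan is to follow, in both cases, the two-part strategy of \cite{bhk}: first isolate a characteristic factor structured enough to compute on, then prove the sharp lower bound directly on that factor, where continuity and minimality force the relevant super-level set to be syndetic. We may assume $\mu(A) > 0$ (otherwise the set in question is all of $\Gamma$); we fix a F{\o}lner sequence $(\Phi_N)$ in $\Gamma$, write $\alpha_0 = \mu(A)$, and, in case (1), set
\begin{equation*}
	D(g) := \mu\bigl(A \cap T_{\varphi(g)}^{-1}A \cap T_{\psi(g)}^{-1}A\bigr) = \int_X \ind_A \cdot T_{\varphi(g)}\ind_A \cdot T_{\psi(g)}\ind_A \, d\mu ,
\end{equation*}
with the obvious four-term analogue in case (2); a standard direct-limit argument further reduces us to finitely generated $\Gamma$.

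\emph{Step 1: the characteristic factor.} For (1) I would prove, by a van der Corput / PET induction, that the Kronecker factor $\mathcal{Z}_1$ — a rotation $R_g z = z + \iota(g)$ on a compact abelian group $Z$ by a dense homomorphism $\iota\colon \Gamma \to Z$, with factor map $\pi\colon X \to Z$ — is characteristic for the averages $\frac{1}{|\Phi_N|}\sum_{g \in \Phi_N} f_0 \cdot T_{\varphi(g)}f_1 \cdot T_{\psi(g)}f_2$: concretely, with $\tilde f = \E{\ind_A}{\mathcal{Z}_1}$ and $\Psi(s,t) := \int_Z \tilde f \cdot (\tilde f \circ \tau_s) \cdot (\tilde f \circ \tau_t) \, dm_Z$, this should yield
\begin{equation*}
	\UClim_g \left| D(g) - \Psi\bigl(\iota(\varphi(g)),\, \iota(\psi(g))\bigr) \right|^2 = 0 .
\end{equation*}
The hypotheses are used precisely here: unwinding the van der Corput inequality leaves auxiliary ergodic averages along the subgroups $\varphi(\Gamma)$, $\psi(\Gamma)$, and $(\psi - \varphi)(\Gamma)$, and finite index of each of these forces the corresponding invariant $\sigma$-algebra to be a sum of eigenspaces of $(T_g)_{g \in \Gamma}$ — since the $\Gamma$-action on the $\Lambda$-invariant functions of a finite-index subgroup $\Lambda$ factors through the finite abelian group $\Gamma/\Lambda$ — and hence to lie inside $\mathcal{Z}_1$, which is what closes the induction. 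For (2) the same scheme should produce the degree-two (Conze--Lesigne / quasi-affine) factor $\mathcal{Z}_2$, an isometric extension of $\mathcal{Z}_1$, as characteristic, now through control of the four-term averages by the $U^3$-seminorm; the four finite-index hypotheses on $a$, $b$, $a+b$, $b-a$ are exactly what handles the six differences among the points $0, a, b, a+b$ in the PET induction.

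\emph{Step 2: syndeticity on the factor.} On $\mathcal{Z}_1$ the function $\Psi$ is continuous on $Z \times Z$ (translation being continuous on $L^2(Z)$) and $\Psi(0,0) = \int_Z \tilde f^{\,3}\,dm_Z \ge \bigl(\int_Z \tilde f\,dm_Z\bigr)^3 = \alpha_0^3$ by Jensen, so $U := \{(s,t) : \Psi(s,t) > \alpha_0^3 - \eps/2\}$ is an open neighbourhood of $(0,0)$ and hence contains a product $V \times V$ of Bohr neighbourhoods of $0$ in $Z$. Pulling back, $S := \{g : \iota(\varphi(g)) \in V,\ \iota(\psi(g)) \in V\}$ contains the Bohr set of $\Gamma$ cut out by the characters $\chi\circ\iota\circ\varphi$ and $\chi\circ\iota\circ\psi$ ($\chi$ ranging over a finite subset of $\hat Z$), and Bohr sets in countable discrete abelian groups are syndetic. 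Meanwhile Step 1 shows that $B := \{g : |D(g) - \Psi(\iota(\varphi(g)),\iota(\psi(g)))| > \eps/2\}$ has upper Banach density zero, and a short combinatorial argument shows that a syndetic set remains syndetic after deleting a set of upper Banach density zero; hence $S \setminus B$ is syndetic, and for $g \in S \setminus B$ we get $D(g) > (\alpha_0^3 - \eps/2) - \eps/2 = \mu(A)^3 - \eps$. Case (2) runs identically with $\mathcal{Z}_2$ in place of $\mathcal{Z}_1$: the four-term analogue of $\Psi$ is again a continuous function along the $\Gamma$-orbit of a point $p_0$ in the (distal) system obtained from three copies of $\mathcal{Z}_2$ by acting through $g \mapsto (ag, bg, (a+b)g)$, it takes a value $\ge \mu(A)^4$ at $p_0$ (Jensen), and distality makes the orbit closure of $p_0$ minimal, so the super-level set at level $\mu(A)^4 - \eps/2$ contains a syndetic return-time set; one then concludes as before.

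\emph{Main obstacle.} For (1) the substantive work is Step 1; Step 2 is then soft. For (2) the crux is the structure theory underlying Step 2: one must describe isometric (quasi-affine) extensions of a compact abelian rotation concretely enough — through their Mackey groups — to verify continuity of the four-term average, the identity $\Psi(p_0) = \int \tilde f^{\,4}$, and the minimality needed for the return-time argument. This is the Conze--Lesigne analysis of \cite{bhk} in the case $\Gamma = \Z$, and carrying it out for an arbitrary countable abelian group and the family $\{a, b, a+b\}$ — where the hypothesis on $b - a$ is precisely what keeps the relevant cocycle nondegenerate — is where essentially all of the difficulty lies.
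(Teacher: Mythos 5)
Note first that the paper does not prove Theorem \ref{thm: abb}: it is imported verbatim from \cite{abb} and serves here only as background. What the paper does prove is Theorem \ref{thm: main}, which contains part (1) as a special case, and its strategy — following \cite{abb} and \cite{abs} — shares your high-level template (characteristic factor, continuity, Jensen at the basepoint, syndeticity of a super-level set) but diverges precisely at the point where I believe your proposal has a gap.

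The problem is your Step 1 assertion
\begin{equation*}
	\UClim_{g}\bigl|D(g) - \Psi(\iota(\varphi(g)),\iota(\psi(g)))\bigr|^2 = 0,
	\qquad \Psi(s,t) = \int_Z \tilde f \cdot (\tilde f\circ\tau_s)\cdot(\tilde f\circ\tau_t)\,dm_Z ,
\end{equation*}
with $\tilde f = \E{f}{\mZ^1}$. This does not follow from $\mZ^1$ being characteristic for the $L^2$-limit of $T_{\varphi(g)}f_1\cdot T_{\psi(g)}f_2$, and it is false in general. Expanding the square turns $\UClim_g|D(g)|^2$ into a double ergodic average on the non-ergodic product $\bfX\times\bfX$ with test functions $f\otimes\bar f$; the relevant characteristic factor is, fiberwise, the Kronecker factor of the ergodic components of $\bfX\times\bfX$, and those components are relatively independent joinings of $\bfX$ with itself over $\mZ^1$, whose Kronecker factors see the Conze--Lesigne structure $\mZ^2(\bfX)$, not merely $\mZ^1(\bfX)\otimes\mZ^1(\bfX)$. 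This is exactly the phenomenon encoded in the Mackey group $M\le H^2$ of Section \ref{sec: Mackey}: when $M$ is a proper subgroup, the correct density-approximation of $D(g)$ is the expression in Theorem \ref{thm: limit} and Corollary \ref{cor: twisted limit}, which carries the extra integral over $M$ and the section $\omega$, and your $\Psi$ is only its $M=H^2$ specialization. This is also why \cite{bhk} (Theorem 1.9 there) need genuine two-step nilsequences, not eigenfunction sequences, already for $\mu(A\cap T^{-n}A\cap T^{-2n}A)$.

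The paper (and, as far as I can tell, \cite{abb} itself) sidesteps the pointwise approximation entirely. In Theorems \ref{thm: averaging form} and \ref{thm: main}, one works with weighted averages $\UClim_g \kappa(\alpha_g)\,D(g)$ for a continuous kernel $\kappa$ on the Kronecker group; the characteristic factors for these twisted averages are $\mZ^2\vee\mI_\varphi$ and $\mZ^2\vee\mI_\psi$ (\cite[Theorem 4.10]{abs}); the right-hand side is evaluated via the Mackey-group limit formula of Section \ref{sec: limit}; the lower bound $\mu(A)^3$ then comes from Chu's integral inequality, and syndeticity from \cite[Lemma 1.9]{abb} — with no exceptional set of density zero to subtract. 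Your Jensen-at-the-identity and neighbourhood-of-identity ideas reappear in this scheme, filtered through $\kappa$, and your density-zero-deletion lemma is correct (syndeticity is equivalent to positive lower Banach density, which is stable under removing a null-density set), but it cannot rescue the argument because Step 1 is where the substance lies. A smaller point: finite index of $(\psi-\varphi)(\Gamma)$ is what drives the van der Corput step, while finite index of $\varphi(\Gamma)$ and $\psi(\Gamma)$ is used later, to force $\mI_\varphi,\mI_\psi\subseteq\mZ^1$ so that the final Chu-type inequality closes at exponent $3$.
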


\begin{rem}
	Endomorphisms of groups with finite index conditions of the kind appearing in item (1) of Theorem \ref{thm: abb}
	have led to fruitful developments in a number of areas of ergodic theory and combinatorics;
	see, e.g., \cite{griesmer, prendiville, ll, gll, pilatte}.
\end{rem}

Item (2) in Theorem \ref{thm: abb} was also obtained independently by Shalom; see \cite[Theorem 1.3]{shalom}.
In joint work with Bergelson and Shalom, item (1) of Theorem \ref{thm: abb} was strengthened as follows:

\begin{thm}[\cite{abs}, Theorems 1.11 and 1.13] \label{thm: abs}
	Let $\Gamma$ be a countable discrete abelian group.
	\begin{enumerate}[(1)]
		\item	Suppose $\varphi, \psi \in \End(\Gamma)$ and two of the three subgroups $\varphi(\Gamma)$, $\psi(\Gamma)$,
			and $(\psi - \varphi)(\Gamma)$ have finite index in $\Gamma$.
			Then $\{\varphi, \psi\}$ has the large intersections property.
		\item	Suppose $a, b \in \Z$ are distinct, nonzero integers such that $(b-a)\Gamma$ has finite index in $\Gamma$.
			Then $\{a, b\}$ has the large intersections property.
	\end{enumerate}
\end{thm}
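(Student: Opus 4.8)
The plan is to prove item~(1) in detail and then to indicate the additional ingredient needed for item~(2). Fix an ergodic system $\left(X,\mX,\mu,(T_g)_{g\in\Gamma}\right)$, a set $A\in\mX$, and $\eps>0$, write $f=\ind_A$, and put
\[
	I(g) \;=\; \mu\left(A\cap T_{\varphi(g)}^{-1}A\cap T_{\psi(g)}^{-1}A\right) \;=\; \int_X f\cdot T_{\varphi(g)}f\cdot T_{\psi(g)}f\,d\mu .
\]
All F{\o}lner averages below are taken along an arbitrary F{\o}lner sequence $(\Phi_N)$ in $\Gamma$, and the limits I invoke do not depend on it. \emph{Step~1 (normalization).} Applying a measure-preserving $T_h$ translates the exponent configuration $\{0,\varphi(g),\psi(g)\}$ without changing $I(g)$; hence the three subgroups $\varphi(\Gamma),\psi(\Gamma),(\psi-\varphi)(\Gamma)$, which are the three edge directions of the triangle on the vertices $0,\varphi(g),\psi(g)$, play interchangeable roles. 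Since any two edges of a triangle meet at a vertex, I may replace $(\varphi,\psi)$ by another pair defining the same recurrence set so that it is exactly $\varphi(\Gamma)$ and $\psi(\Gamma)$ that have finite index in $\Gamma$. \emph{Step~2 (characteristic factor).} I invoke the Host--Kra-type structure theory for $\Gamma$-actions in the form established in \cite{abs}: the bilinear averages $\frac1{|\Phi_N|}\sum_{g\in\Phi_N}T_{\varphi(g)}f_1\cdot T_{\psi(g)}f_2$ are controlled in $L^2(\mu)$ by the \emph{quasi-affine} (Conze--Lesigne) factor $\mathcal{Q}$. Writing $\tilde f:=\E{f}{\mathcal{Q}}$ and $\tilde I(g):=\int_X\tilde f\cdot T_{\varphi(g)}\tilde f\cdot T_{\psi(g)}\tilde f\,d\mu$, this gives in particular that the error is negligible on average, $\frac1{|\Phi_N|}\sum_{g\in\Phi_N}\bigl|I(g)-\tilde I(g)\bigr|\to 0$ (the one-function cross terms vanish identically by orthogonality to $\mathcal{Q}$; the two- and three-function ones are controlled by the Gowers--Host--Kra seminorms that annihilate $f-\tilde f$). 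It therefore suffices to understand $\tilde I(g)$, which is computed inside $\mathcal{Q}$; since $\mathcal{Q}$ is an inverse limit of quasi-affine systems, approximating $\tilde f$ by a function measurable with respect to one of them I may assume $\mathcal{Q}$ is a single quasi-affine system: an abelian group extension $Z\times_\sigma Y$ of a compact abelian group rotation $(Z,z\mapsto z+\alpha_g)$, where $g\mapsto\alpha_g$ is a homomorphism $\Gamma\to Z$ of dense image and $\sigma:\Gamma\times Z\to Y$ is a Conze--Lesigne cocycle into a compact abelian group $Y$ which, for the purpose of this sketch, I treat as affine in the base variable up to a coboundary: $\sigma_g(z)=c_g\cdot L_g(z)$ with $c_g\in Y$ and $L_g:Z\to Y$ a continuous homomorphism.

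\emph{Step~3 (analysis on the quasi-affine factor; the main obstacle).} The plan is to Fourier-expand $\tilde f$ along the $Y$-fibers and along $Z$. Multiplying out $\tilde f\cdot T_{\varphi(g)}\tilde f\cdot T_{\psi(g)}\tilde f$ and integrating over $Y$ retains only triples of $\hat Y$-characters of product $1$, and the residual $Z$-integral brings in the cocycle phases $\gamma_1\bigl(\sigma_{\varphi(g)}(z)\bigr)\,\gamma_2\bigl(\sigma_{\psi(g)}(z)\bigr)$, which by affineness equal the constant $\gamma_1(c_{\varphi(g)})\gamma_2(c_{\psi(g)})$ times the $\hat Z$-character $(\gamma_1\circ L_{\varphi(g)})\cdot(\gamma_2\circ L_{\psi(g)})$; integrating the remaining product of $Z$-Fourier modes against that character forces one further character relation, and the outcome is $\tilde I(g)$ expressed as a sum, over a fixed countable index set, of monomials in $\alpha_{\varphi(g)},\alpha_{\psi(g)},c_{\varphi(g)},c_{\psi(g)}$ and the $\hat Z$-valued ``frequencies'' $\gamma_i\circ L_{\varphi(g)}$ and $\gamma_i\circ L_{\psi(g)}$. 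The obstruction is that $g\mapsto L_{\varphi(g)}$ a priori ranges in a \emph{discrete}, possibly infinite group (a subgroup of $\operatorname{Hom}(Z,Y)$), so there is no reason a priori for $\tilde I$ to factor through a homomorphism into a compact group. This is precisely where the finite-index hypotheses must be used, and where I expect the real work to be: a cohomological argument exploiting that $\varphi(\Gamma)$ and $\psi(\Gamma)$ have finite index should show that all terms whose $g$-dependence does not factor through the \emph{torsion} parts of those ranges contribute nothing after $g$-averaging, so that, up to a further remainder negligible on average, $\tilde I(g)=\Psi(\beta(g))$ for a continuous $\Psi:K\to[0,1]$ on a compact abelian group $K$ and a homomorphism $\beta:\Gamma\to K$ with dense image. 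In ergodic-theoretic terms this is the identification of the Mackey group of the relevant self-joining of the quasi-affine system, the content being that the finite-index conditions make it as large as possible.

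\emph{Step~4 (conclusion).} Granting Step~3, the rest is soft. The $\Gamma$-rotation $k\mapsto k+\beta(g)$ on $K$ is minimal and uniquely ergodic. Since $\tilde f\ge0$, conditional Jensen gives $\Psi(0_K)=\tilde I(0)=\int_X\tilde f^3\,d\mu\ge\bigl(\int_X\tilde f\,d\mu\bigr)^3=\mu(A)^3$, so by continuity
\[
	U \;:=\; \bigl\{k\in K:\Psi(k)>\mu(A)^3-\tfrac{\eps}{2}\bigr\}
\]
is a nonempty open subset of $K$, and by unique ergodicity $\liminf_N\frac1{|\Phi_N|}\bigl|\beta^{-1}(U)\cap\Phi_N\bigr|\ge m_K(U)>0$ along every F{\o}lner sequence. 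On the other hand, combining the two ``negligible on average'' estimates with Chebyshev's inequality, the set $E:=\bigl\{g\in\Gamma:|I(g)-\Psi(\beta(g))|>\tfrac{\eps}{2}\bigr\}$ satisfies $\frac1{|\Phi_N|}|E\cap\Phi_N|\to 0$ along every F{\o}lner sequence. Hence $S:=\beta^{-1}(U)\setminus E$ has positive lower density along every F{\o}lner sequence; in particular $\Gamma\setminus S$ contains no F{\o}lner sequence, so it is not thick and $S$ is syndetic. Finally $S$ is contained in the target set, because $g\in S$ forces $\Psi(\beta(g))>\mu(A)^3-\eps/2$ and $|I(g)-\Psi(\beta(g))|\le\eps/2$, hence $I(g)>\mu(A)^3-\eps$. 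This proves item~(1). For item~(2) I would run the same scheme with $\varphi=a\cdot\id$ and $\psi=b\cdot\id$; now Step~1 is unavailable ($a\Gamma$ and $b\Gamma$ need not have finite index), but the scalar endomorphisms act diagonally on the spectral data ($\alpha_{ag}=a\alpha_g$, $L_{ag}=aL_g$, and so on), and the extra ingredient is to show that finite index of $(b-a)\Gamma$ alone still forces the frequencies surviving in Step~3 to be torsion — this use of the scalar structure in place of two finite-index conditions is the delicate point specific to item~(2).
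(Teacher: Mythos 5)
First, a framing remark: Theorem~\ref{thm: abs} is imported in this paper from \cite{abs} and is not re-proved here; the closest relative in this paper is Theorem~\ref{thm: main}, whose hypotheses overlap with but neither contain nor are contained in those of Theorem~\ref{thm: abs}(1), and which does subsume Theorem~\ref{thm: abs}(2). Your high-level template — pass to the characteristic factor, reduce to a quasi-affine system, identify a Mackey group, conclude syndeticity from unique ergodicity of a compact rotation — is indeed the right skeleton and matches what this paper does in Sections~\ref{sec: Mackey}--\ref{sec: proof}. But your Step~3 is precisely the substantive content, and you explicitly leave it as a black box (``where I expect the real work to be''). All of the non-formal mathematics — the extension trick (Section~\ref{sec: extensions}), the decomposition of the Mackey group via Theorem~\ref{thm: Mackey gluing} and Theorem~\ref{thm: Mackey prod}, and the limit formula of Theorem~\ref{thm: limit} — lives inside that box. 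A proof that ``grants Step~3'' has not yet engaged with the theorem.

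Beyond the incompleteness, three steps as written would fail. (i) Your Step~1 normalizes so that $\varphi(\Gamma)$ and $\psi(\Gamma)$ have finite index, but for the van der Corput step that underlies the Mackey analysis (see the proof of Proposition~\ref{prop: Mackey orthogonality}) what is needed is finite index of $(\psi-\varphi)(\Gamma)$, so that $\mI_{\psi-\varphi}\subseteq\mZ$; with your choice of normalization this may fail. The symmetry you found is real, but you should use it to place the finite-index condition on $(\psi-\varphi)(\Gamma)$ rather than on the pair $\{\varphi(\Gamma),\psi(\Gamma)\}$. (ii) Your treatment of the Conze--Lesigne cocycle as literally affine, $\sigma_g(z)=c_g\cdot L_g(z)$, is not justified for general countable discrete abelian $\Gamma$; the paper works instead with the cohomological characterizations in Lemmas~\ref{lem: cohomologous to character} and~\ref{lem: quasi-affine}, and the entire Mackey-group computation (Theorem~\ref{thm: Mackey gluing}) is carried out at the cohomological level precisely because one cannot assume this affine normal form. (iii) In Step~4 the equality $\Psi(0_K)=\tilde I(0)=\int_X\tilde f^3\,d\mu$ does not follow: you only know $\tilde I(g)-\Psi(\beta(g))$ is small \emph{on average}, not pointwise at $g=0$, and in any case the correct value of the limit near $t=0$ involves an extra average over the Mackey group $M$ (as in \eqref{eq: key inequality}), so the lower bound is not a bare Jensen inequality but requires the Chu-type estimate \cite[Lemma~1.6]{chu} applied to conditional expectations onto \emph{different} $\sigma$-algebras $\mW_\varphi\lor\mI_\varphi$ and $\mW_\psi\lor\mI_\psi$. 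Until the Mackey group is actually computed and the limit formula actually derived, none of these objects is available.
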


This result leaves the following as a natural open question:

\begin{quest}[\cite{abs}, Question 1.12] \label{quest: motivation}
	Let $\Gamma$ be a countable discrete abelian group.
	Suppose $\varphi, \psi \in \End(\Gamma)$ such that $(\psi - \varphi)(\Gamma)$ has finite index in $\Gamma$.
	Does $\{\varphi, \psi\}$ have the large intersections property?
\end{quest}

\begin{rem}
	There are a variety of examples of pairs $\{\varphi, \psi\}$ without the large intersections property
	(see \cite[Example 10.2]{abb} and \cite[Theorem 1.14]{abs}),
	so it is necessary to impose some condition on $\varphi, \psi \in \End(\Gamma)$
	(such as the finite index assumption in Question \ref{quest: motivation})
	in order to hope for the large intersections property.
\end{rem}

The goal of this paper is to extend the techniques in \cite{abs} to answer Question \ref{quest: motivation} affirmatively
under a mild additional technical assumption.
As we will see, this condition is always satisfied when the endomorphisms are obtained as multiplication by integers
or when the group $\Gamma$ is equal to $\Z^d$ for some $d \in \N$.
Hence, we are able to reproduce Theorem \ref{thm: abs}(2)
and fully resolve Question \ref{quest: motivation} for $\Gamma = \Z^d$.

%%%%%%%%%%%%%%%%%%%%%%%%%%%%%%%%%%%%%%%%%%%%%%%%%%%%%%%%

\subsection{Main results}

Our main result is the following:

\begin{thm} \label{thm: main}
	Let $\Gamma$ be a countable discrete abelian group.
	Let $\varphi, \psi \in \End(\Gamma)$.
	Suppose there exist endomorphisms $\eta, \varphi', \psi', \theta_1, \theta_2 \in \End(\Gamma)$ such that:
	\begin{enumerate}[(i)]
		\item	$\eta(\Gamma)$ is a finite index subgroup of $\Gamma$;
		\item	$\varphi = \varphi' \circ \eta$ and $\psi = \psi' \circ \eta$;
		\item	$\theta_1 \circ \varphi' + \theta_2 \circ \psi'$ is injective; and
		\item	$(\psi' - \varphi')(\Gamma)$ is a finite index subgroup of $\Gamma$.
	\end{enumerate}
	Then for any ergodic measure-preserving $\Gamma$-system $\left( X, \mX, \mu, (T_g)_{g \in \Gamma} \right)$,
	any $A \in \mX$, and any $\eps > 0$, the set
	\begin{equation*}
		\left\{ g \in \Gamma : \mu \left( A \cap T_{\varphi(g)}^{-1} A \cap T_{\psi(g)}^{-1}A \right) > \mu(A)^3 - \eps \right\}
	\end{equation*}
	is syndetic.
\end{thm}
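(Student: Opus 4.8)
The plan is to carry out the three-part strategy indicated in the introduction: reduce to a characteristic factor, pass to an extension on which the discrete spectrum is well-behaved with respect to $\varphi$ and $\psi$, and then compute the relevant Mackey group.

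\emph{Step 1: reduction to a quasi-affine factor.} Fix an ergodic $\Gamma$-system $(X,\mX,\mu,(T_g))$, a set $A\in\mX$, and $\eps>0$. By the main result of \cite{abs}, the averages $\tfrac{1}{|\Phi_N|}\sum_{g\in\Phi_N}T_{\varphi(g)}f_1\cdot T_{\psi(g)}f_2$ converge in $L^2(\mu)$ and are controlled by a characteristic factor $\mZ$ that is an inverse limit of quasi-affine (Conze--Lesigne) extensions of the Kronecker factor; passing if necessary to such an extension, we may write $\mZ=Z\times_\sigma G$, where $Z$ is the rotational (Kronecker) factor --- a rotation $z\mapsto z+\kappa(g)$ on a compact abelian group with $\kappa(\Gamma)$ dense --- and $\sigma\colon\Gamma\times Z\to G$ is a quasi-affine cocycle. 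Put $\bar f:=\E{\ind_A}{\mZ}\in[0,1]$, let $S$ denote the $\Gamma$-action on $\mZ$, and set
\[ F(g):=\int_{\mZ}\bar f\cdot(S_{\varphi(g)}\bar f)\cdot(S_{\psi(g)}\bar f)\,d\mu_{\mZ}. \]
By the arguments of \cite{bhk} (adapted to the group setting in \cite{abb,abs}), the theorem reduces to two assertions about $F$: first, the lower bound $F(0)=\int\bar f^3\ge\bigl(\int\bar f\bigr)^3=\mu(A)^3$, which is just Jensen's inequality; and second, that the values $F(g)$ are of the form $\Phi(g\cdot x_0)$ for a continuous function $\Phi$ on a compact space carrying a minimal $\Gamma$-action, whence $\{g:F(g)>\sup_h F(h)-\delta\}$ is syndetic for every $\delta>0$ (return-time sets of a minimal system to a nonempty open set are syndetic). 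The first assertion is immediate, so the task is to establish the second, and this is what occupies the remaining steps.

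\emph{Step 2: adapting the discrete spectrum.} The structure of $F$ is dictated by how $\varphi$ and $\psi$ act on $Z$ and interact with the Conze--Lesigne equations satisfied by $\sigma$, and the obstruction is that $\varphi,\psi$ need not even induce endomorphisms of $Z$. This is where hypotheses (i) and (ii) enter: writing $\varphi=\varphi'\circ\eta$ and $\psi=\psi'\circ\eta$ with $\eta(\Gamma)$ of finite index, one passes to an ergodic quasi-affine extension $\widetilde{\bfX}$ of the system, with characteristic factor $\widetilde{\mZ}=\widetilde Z\times_{\widetilde\sigma}\widetilde G$, whose Kronecker factor $\widetilde Z$ is \emph{$(\varphi,\psi)$-adapted} in the sense that the dual homomorphisms of $\varphi',\psi',\eta,\theta_1,\theta_2$ all preserve $\hat{\widetilde Z}\le\hat\Gamma$; consequently these endomorphisms descend to continuous self-maps of $\widetilde Z$, with $\eta$ inducing one whose image has finite index. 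Since $\mu(A\cap T_{\varphi(g)}^{-1}A\cap T_{\psi(g)}^{-1}A)$ is unchanged when $A$ is lifted to $\widetilde{\bfX}$, it suffices to prove the theorem for $\widetilde{\bfX}$; thus we may assume $Z$ is itself $(\varphi,\psi)$-adapted. (The finite-index hypothesis (i) is what allows this adapting extension to be taken ergodic, while preserving the quasi-affine structure.)

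\emph{Step 3: the Mackey group, and the main obstacle.} With $Z$ adapted, the measure on $\mZ^3$ governing $F$ is supported over the evident Kronecker-level relation among $z$, $z+\bar\varphi(g)$, and $z+\bar\psi(g)$, and in the $G^3$-fibres it is Haar measure on cosets of a \emph{Mackey group} $\M\le G^3$ attached to the quasi-affine cocycle obtained by pulling $\sigma$ back along $\varphi$ and $\psi$. The crux is to show that $\M$ is as large as it should be --- namely, that it contains the subgroup of $G^3$ cut out by the natural ``second-difference'' relation made available by hypotheses (iii) and (iv): injectivity of $\theta_1\circ\varphi'+\theta_2\circ\psi'$ supplies enough independence among the three Kronecker coordinates to carry out the Conze--Lesigne manipulation that places the pulled-back cocycle in the predicted subgroup, while finiteness of the index of $(\psi'-\varphi')(\Gamma)$ identifies the unique nontrivial relation holding at the Kronecker level. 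I expect this identification of $\M$ to be the principal obstacle of the proof: since $\varphi$ and $\psi$ are arbitrary endomorphisms, the pulled-back cocycle lives over $Z$ precomposed with non-invertible self-maps, and one must verify that the Conze--Lesigne data of $\sigma$ transforms in a controlled way and that no unanticipated relations shrink $\M$ below the predicted subgroup --- this is exactly where hypotheses (iii) and (iv) are used essentially. Once $\M$ has been pinned down, the structural description of $F$ required in Step 1 follows along the lines of \cite{abs}, and the proof is complete.
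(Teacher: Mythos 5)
Your three-part outline (reduce to a quasi-affine characteristic factor, pass to an extension with adapted discrete spectrum, pin down a Mackey group) does match the paper's architecture, and your Step~3 intuition --- that hypotheses (iii) and (iv) are consumed in controlling the Mackey group --- is on target.  But Step~1 contains a gap that Step~2 does not repair, and which the paper sidesteps by a different mechanism.

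You claim the proof reduces to showing that $F(g)$ ``is of the form $\Phi(g\cdot x_0)$ for a continuous function $\Phi$ on a compact space carrying a minimal $\Gamma$-action.''  This is the BHK-style nilsequence strategy, and it is not what the paper does, nor is it available for free here.  Two separate problems.  First, the characteristic-factor result from \cite[Theorem 4.10]{abs} gives only an \emph{averaged} identification $\UClim_{g}\bigl[\mu(A\cap T_{\varphi(g)}^{-1}A\cap T_{\psi(g)}^{-1}A)-F(g)\bigr]=0$, not a pointwise one; so even if you knew $\{g:F(g)>F(0)-\eps\}$ to be syndetic, you could not transfer this to the actual intersection measure without first decomposing $\mu(A\cap\cdots)-F(g)$ into a ``structured + small in density'' pair and handling the error carefully, as BHK did for $\Z$-nilsystems.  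Second, the continuity claim itself ($F=\Phi\circ\text{orbit}$ with $\Phi$ continuous on a compact minimal $\Gamma$-space) is a strong structural assertion that the paper never proves and that does not follow from the limit formula; the limit formula (Theorem \ref{thm: limit}) expresses the limit as an integral over $Z\times M$ against a function $\omega$ that is only continuous \emph{into $\M(Z,H^2/M)$}, not pointwise.  What the paper actually does instead is prove a \emph{weighted} inequality
\begin{equation*}
\UClim_{g\in\Gamma}\kappa(\alpha_g)\,\mu\bigl(A\cap T_{\varphi(g)}^{-1}A\cap T_{\psi(g)}^{-1}A\bigr)>\mu(A)^3-\eps,
\end{equation*}
where $\kappa\ge 0$ is continuous on the Kronecker group $Z$ with $\int\kappa=1$ and supported near $0$, and then derives syndeticity from this by a soft contradiction argument (\cite[Lemma 1.9]{abb}).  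This replaces both the pointwise decomposition and the continuity-on-a-minimal-system claim, and is substantially easier.  You should replace your Step~1 with this averaging-with-a-bump-function scheme.

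Two further points you would need to supply.  (a) In Step~2 you ask only that the discrete spectrum be preserved under the duals of $\varphi',\psi',\eta,\theta_1,\theta_2$ (i.e.\ $\Phi$-completeness), but the paper also needs the spectrum to be \emph{divisible} by $\theta_1\circ\varphi'+\theta_2\circ\psi'$; this is exactly where hypothesis (iii) (injectivity) enters, and the divisibility is what eventually kills the eigenvalue obstruction $\gamma$ that appears in the Mackey gluing theorem.  Without it the product decomposition $M'=M'_\varphi\times M'_\psi$ does not follow.  (b) You do not say how the lower bound $\mu(A)^3$ is extracted once the Mackey group is identified; in the paper this is Chu's inequality (\cite[Lemma 1.6]{chu}) applied to $\int f\cdot\E{f}{\mW_\varphi\lor\mI_\varphi}\cdot\E{f}{\mW_\psi\lor\mI_\psi}\,d\mu$, and it relies on the product structure $M=M_\varphi\times M_\psi$ obtained from yet another extension.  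Note also that your ``Jensen gives $F(0)\ge\mu(A)^3$'' observation is not used in the paper's route at all; the final bound comes from Chu's lemma applied to the full integral, not from evaluating at $g=0$.  Finally, the role of $\eta$ is not to adapt the Kronecker factor (that is done via the extension machinery); rather, passing from $T$ to $T\circ\eta$ turns the ergodic system into one with finitely many ergodic components, one applies the $\eta=\id$ case to each, and reassembles with Jensen's inequality --- this separate reduction is absent from your Step~2.
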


\begin{rem}
	The conditions (i)--(iv) in Theorem \ref{thm: main} may be forbidding at first glance.
	We give a brief explanation here and also refer the reader to the special cases outlined below
	for developing stronger intuition about each condition.
	
	Conditions (i) and (ii) can be interpreted as follows.
	Define a new $\Gamma$-action on $(X, \mX, \mu)$ by $S_g = T_{\eta(g)}$.
	Then
	\begin{equation*}
		\mu \left( A \cap T_{\varphi(g)}^{-1}A \cap T_{\psi(g)}^{-1}A \right) = \mu \left( A \cap S_{\varphi'(g)}^{-1}A \cap S_{\psi'(g)}^{-1}A \right),
	\end{equation*}
	so it suffices to prove a statement about the system $\left( X, \mX, \mu, (S_g)_{g \in \Gamma} \right)$
	and the endomorphisms $\varphi', \psi' \in \End(\Gamma)$.
	The key consequence of (i) is that the new system $\left( X, \mX, \mu, (S_g)_{g \in \Gamma} \right)$
	has only finitely many ergodic components, and our dynamical tools are flexible enough to handle this situation.
	
	Condition (iii) imposes a level of nontriviality to the pair $\{\varphi', \psi'\}$
	by ensuring that the map $g \mapsto (\varphi'(g), \psi'(g))$ is injective.
	This condition also turns out to be crucial to describing the limiting behavior of double ergodic averages
	associated with $\{\varphi', \psi'\}$.
	
	Finally, condition (iv) is the key assumption in order to get started with analyzing the relevant double ergodic average
	by invoking \cite[Theorem 4.10]{abs}, which is proved using the van der Corput differencing trick
	(see Lemma \ref{lem: vdC} below).
	It is absolutely essential to the method used in this paper,
	though it is less clear whether condition (iv) is needed to obtain the desired conclusion.
	A concrete example where we do not know whether or not the large intersections property holds
	(and for which condition (iv) does not hold) is the following.
	Let $\Gamma = (\Q_{>0}, \cdot)$ be the group of positive rational numbers under multiplication (considered as a discrete group).
	Let $a, b \in \N$ be coprime.
	It was asked in \cite[Question 1.18]{abs} whether the pair $\{q \mapsto q^a, q \mapsto q^b\}$
	has the large intersections property, and we are presently unable to make any substantial progress on this question.
	For this example, although condition (iv) does not hold, it is nevertheless the case that
	the group generated by $\{q^a : q \in \Q_{>0}\}$ and $\{q^b : q \in \Q_{>0}\}$ has finite index in $\Q_{>0}$
	(in fact, it is equal to $\Q_{>0}$), which eliminates many of the possible approaches to producing a counterexample.
	Additional discussion of the difficulties involved in this problem can be found in \cite[Section 2.7]{abs}.
\end{rem}

We now turn to several consequences of Theorem \ref{thm: main}.

Theorem \ref{thm: main} includes Theorem \ref{thm: abs}(2) as a special case.
Given $a, b \in \Z$ such that $(b-a)\Gamma$ has finite index in $\Gamma$,
let $d = \gcd(a,b)$, $a' = \frac{a}{d}$, and $b' = \frac{b}{d}$.
Since $d \mid b-a$, we have $d\Gamma \supseteq (b-a)\Gamma$, so $d \Gamma$ has finite index in $\Gamma$.
The integers $a'$ and $b'$ are coprime, so there exist $c_1, c_2 \in \Z$ such that $c_1a' + c_2b' = 1$.
Finally $(b' - a') \mid (b-a)$, so $(b' - a')\Gamma \supseteq (b-a)\Gamma$ has finite index in $\Gamma$.
Taking $\varphi(g) = ag$, $\psi(g) = bg$, $\eta(g) = dg$, $\varphi'(g) = a'g$, $\psi'(g) = b'g$,
$\theta_1(g) = c_1g$, and $\theta_2(g) = c_2g$ and applying Theorem \ref{thm: main}
reproduces the conclusion of Theorem \ref{thm: abs}(2).

Another illustrative special case of Theorem \ref{thm: main} is the following:

\begin{cor} \label{cor: main cor}
	Let $\Gamma$ be a countable discrete abelian group.
	Let $\varphi, \psi \in \End(\Gamma)$ such that $\psi - \varphi$ is injective with finite index image.
	Then for any ergodic measure-preserving $\Gamma$-system $\left( X, \mX, \mu, (T_g)_{g \in \Gamma} \right)$,
	any $A \in \mX$, and any $\eps > 0$, the set
	\begin{equation*}
		\left\{ g \in \Gamma : \mu \left( A \cap T_{\varphi(g)}^{-1}A \cap T_{\psi(g)}^{-1}A \right) > \mu(A)^3 - \eps \right\}
	\end{equation*}
	is syndetic.
\end{cor}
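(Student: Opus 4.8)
The plan is to deduce Corollary \ref{cor: main cor} directly from Theorem \ref{thm: main} by exhibiting suitable auxiliary endomorphisms $\eta, \varphi', \psi', \theta_1, \theta_2$. Since the hypothesis already furnishes an injective endomorphism with finite index image, namely $\psi - \varphi$, there is no reason to pass to a proper subgroup, so I would simply take $\eta = \id_\Gamma$, $\varphi' = \varphi$, and $\psi' = \psi$.

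With these choices, conditions (i), (ii), and (iv) of Theorem \ref{thm: main} are immediate: $\eta(\Gamma) = \Gamma$ has finite index (trivially), the factorizations $\varphi = \varphi' \circ \eta$ and $\psi = \psi' \circ \eta$ hold by construction, and $(\psi' - \varphi')(\Gamma) = (\psi - \varphi)(\Gamma)$ is a finite index subgroup of $\Gamma$ by hypothesis. For condition (iii) I would take $\theta_1 = -\id_\Gamma$ and $\theta_2 = \id_\Gamma$ (both of which are genuine endomorphisms of the abelian group $\Gamma$), so that $\theta_1 \circ \varphi' + \theta_2 \circ \psi' = \psi - \varphi$, which is injective by assumption. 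Theorem \ref{thm: main} then applies and gives exactly the asserted syndeticity of the set of $g \in \Gamma$ with $\mu\left( A \cap T_{\varphi(g)}^{-1}A \cap T_{\psi(g)}^{-1}A \right) > \mu(A)^3 - \eps$.

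There is essentially no obstacle here: the only thing to observe is that the injectivity of $\psi - \varphi$ makes condition (iii) available with the tautological choice $\theta_1 = -\id_\Gamma$, $\theta_2 = \id_\Gamma$, and that the finite index hypothesis on the image of $\psi - \varphi$ is precisely condition (iv) in the case $\eta = \id_\Gamma$. This is in contrast with the recovery of Theorem \ref{thm: abs}(2) carried out in the excerpt above, where a nontrivial rescaling $\eta(g) = \gcd(a,b)\,g$ is genuinely needed to meet all four conditions simultaneously.
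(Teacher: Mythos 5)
Your deduction is correct and is exactly the intended specialization of Theorem \ref{thm: main}: with $\eta = \id_\Gamma$, $\varphi' = \varphi$, $\psi' = \psi$, $\theta_1 = -\id_\Gamma$, $\theta_2 = \id_\Gamma$, one has $\theta_1 \circ \varphi' + \theta_2 \circ \psi' = \psi - \varphi$, and all four hypotheses follow immediately from the stated assumptions. The paper leaves this verification implicit, and your argument fills it in precisely as the author would.
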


For the group $\Gamma = \Z^d$, Corollary \ref{cor: main cor} takes the following shape:

\begin{cor} \label{cor: Z^d}
	Let $\bfX = \left( X, \mX, \mu, (T_{\vec{n}})_{\vec{n} \in \Z^d} \right)$ be an ergodic $\Z^d$-system.
	Then for any integer matrices $M_1, M_2 \in M_{d \times d}(\Z)$ such that $M_2 - M_1$ is nonsingular,
	any $A \in \mX$, and any $\eps > 0$, the set
	\begin{equation*}
		\left\{ \vec{n} \in \Z^d : \mu \left( A \cap T_{M_1\vec{n}}^{-1}A \cap T_{M_2\vec{n}}^{-1}A \right) > \mu(A)^3 - \eps \right\}
	\end{equation*}
	is syndetic.
\end{cor}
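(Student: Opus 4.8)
The plan is to obtain Corollary \ref{cor: Z^d} as an essentially immediate specialization of Corollary \ref{cor: main cor} to the group $\Gamma = \Z^d$. The first step is to set up the standard dictionary: every endomorphism of $\Z^d$ has the form $\vec{n} \mapsto M\vec{n}$ for a unique $M \in M_{d\times d}(\Z)$, composition of endomorphisms corresponds to matrix multiplication, and under this identification $\varphi \leftrightarrow M_1$, $\psi \leftrightarrow M_2$, and $\psi - \varphi \leftrightarrow M_2 - M_1$. The class of systems (ergodic $\Z^d$-systems) and the conclusion in the two statements already match verbatim, so everything comes down to matching the hypotheses.

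Thus the only thing I would need to verify is that ``$\psi - \varphi$ is injective with finite index image'' is equivalent, through this dictionary, to ``$M_2 - M_1$ is nonsingular''. This is elementary linear algebra over $\Z$: for $M \in M_{d\times d}(\Z)$, the map $\vec{n} \mapsto M\vec{n}$ on $\Z^d$ is injective iff $\det M \ne 0$ (a nonzero rational kernel vector may be scaled to lie in $\Z^d$, and conversely an integer kernel vector witnesses $\det M = 0$); moreover, when $\det M \ne 0$ the subgroup $M\Z^d$ has index exactly $|\det M| < \infty$ (for instance via Smith normal form), whereas if $\det M = 0$ the image is contained in a proper rational subspace and hence has infinite index. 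Consequently the standing hypothesis $\det(M_2 - M_1) \ne 0$ of Corollary \ref{cor: Z^d} is precisely the hypothesis of Corollary \ref{cor: main cor}, and the result follows at once.

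As an alternative route — and a useful sanity check — one could bypass Corollary \ref{cor: main cor} and apply Theorem \ref{thm: main} directly with $\eta = \id_{\Z^d}$, $\varphi' = \varphi$, $\psi' = \psi$. Then conditions (i), (ii), and (iv) are immediate, the last because $\det(M_2 - M_1) \ne 0$, and condition (iii) reduces to producing $\Theta_1, \Theta_2 \in M_{d\times d}(\Z)$ with $\Theta_1 M_1 + \Theta_2 M_2$ nonsingular. I would supply these as follows: nonsingularity of $M_2 - M_1$ forces the stacked $2d \times d$ matrix $\left(\begin{smallmatrix} M_1 \\ M_2 \end{smallmatrix}\right)$ to have full column rank $d$ (any $\vec{v}$ in its kernel has $M_1 \vec{v} = M_2 \vec{v} = 0$, hence $(M_2 - M_1)\vec{v} = 0$ and $\vec{v} = 0$), so one can select $d$ of its $2d$ rows forming a nonsingular $d \times d$ matrix $P$ and take $\Theta_1, \Theta_2$ to be the associated $\{0,1\}$ row-selection matrices, for which $\Theta_1 M_1 + \Theta_2 M_2 = P$.

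I do not expect a genuine obstacle: all of the analytic content is already packaged in Theorem \ref{thm: main} and Corollary \ref{cor: main cor}. The only point demanding any care is the bookkeeping that identifies the module-theoretic conditions (injectivity, finite index) with the single linear-algebraic condition $\det(M_2 - M_1) \ne 0$, together with, in the alternative route, the elementary rank argument that furnishes condition (iii).
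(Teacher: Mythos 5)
Your proposal is correct and matches the paper's (implicit) derivation: the paper presents Corollary~\ref{cor: Z^d} as a direct translation of Corollary~\ref{cor: main cor} under the dictionary between $\End(\Z^d)$ and $M_{d\times d}(\Z)$, and your verification that ``$M_2 - M_1$ nonsingular'' is the same as ``$\psi - \varphi$ injective with finite index image'' is exactly the only bookkeeping required. One small remark on your alternative route via Theorem~\ref{thm: main}: the row-selection construction of $\Theta_1,\Theta_2$ is an unnecessary detour, since the obvious choice $\Theta_1 = -I$, $\Theta_2 = I$ already yields $\Theta_1 M_1 + \Theta_2 M_2 = M_2 - M_1$, which is nonsingular by hypothesis; this choice $(\theta_1 = -\id,\ \theta_2 = \id)$ is precisely how Corollary~\ref{cor: main cor} is obtained from Theorem~\ref{thm: main} in the first place.
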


In the case $d = 2$, Corollary \ref{cor: Z^d} was established in \cite{abs} using a combination of different methods.
If $M_1$ is also nonsingular, then the conclusion follows from \cite[Theorem 1.11]{abs},
which is proved using methods similar to the current paper involving characteristic factors for multiple ergodic averages.
In the case that both $M_1$ and $M_2$ are rank one matrices, basic linear algebra combined with the Fubini property
for uniform Ces\`{a}ro limits (see \cite[Lemma 1.1]{bl-fubini}) allows one to prove a stronger result
that implies, in particular that the set
\begin{equation*}
	\left\{ \vec{n} \in \Z^d : \mu \left( A \cap T_{M_1\vec{n}}^{-1}A \cap T_{M_2\vec{n}}^{-1}A \right) > \mu(A)^3 - \eps \right\}
\end{equation*}
is syndetic for any (not necessarily ergodic) measure-preserving system
$\left( X, \mX, \mu, (T_{\vec{n}})_{\vec{n} \in \Z^2} \right)$, any $A \in \mX$, and any $\eps > 0$; see \cite[Theorem 7.1]{abs}.

The method for handling the case where both $M_1$ nor $M_2$ are singular matrices
does not easily generalize for $d \ge 3$.
Instead, we produce a new proof avoiding any use of matrix manipulations
that unifies the two different cases in order to apply to general $d \in \N$
and in fact to general countable discrete abelian groups.

%%%%%%%%%%%%%%%%%%%%%%%%%%%%%%%%%%%%%%%%%%%%%%%%%%%%%%%%

\subsection{Combinatorial consequences and questions} \label{sec: combinatorics}

Recurrence results in ergodic theory translate into combinatorial statements about sets of positive density.
Let us first make precise what we mean by the density of a subset of an abelian group.
A \emph{F{\o}lner sequence} in a countable discrete abelian group $\Gamma$ is a sequence $(\Phi_N)_{N \in \N}$
of finite subsets of $\Gamma$ such that for any $x \in \Gamma$,
\begin{equation*}
	\frac{\left| (\Phi_N + x) \triangle \Phi_N \right|}{|\Phi_N|} \tendsto{N \to \infty} 0.
\end{equation*}
The \emph{upper density of a set $E \subseteq \Gamma$ along a F{\o}lner sequence $\Phi = (\Phi_N)_{N \in \N}$} is the quantity
\begin{equation*}
	\overline{d}_{\Phi}(E) = \limsup_{N \to \infty} \frac{\left| E \cap \Phi_N \right|}{|\Phi_N|}.
\end{equation*}
The \emph{upper Banach density} of $E \subseteq \Gamma$ is $d^*(E) = \sup_{\Phi} \overline{d}_{\Phi}(E)$, where the supremum is over all F{\o}lner sequences in $\Gamma$.
An immediate consequence of Theorem \ref{thm: main} together with a version of the Furstenberg correspondence principle for ergodic systems (see \cite[Theorem 2.8]{bf}) is the following:

\begin{thm}
	Let $\Gamma$ be a countable discrete abelian group.
	Let $\varphi, \psi \in \End(\Gamma)$ be as in Theorem \ref{thm: main}.
	Then for any $E \subseteq \Gamma$ and any $\eps > 0$, the set
	\begin{equation*}
		\left\{ g \in \Gamma : d^* \left( E \cap (E - \varphi(g)) \cap (E - \psi(g)) \right) > d^*(E)^3 - \eps \right\}
	\end{equation*}
	is syndetic.
\end{thm}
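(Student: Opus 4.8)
The plan is to deduce this statement from Theorem \ref{thm: main} by a standard transfer argument, the one point requiring care being that we must invoke a version of the Furstenberg correspondence principle that produces an \emph{ergodic} system, since the hypothesis of Theorem \ref{thm: main} requires ergodicity. We may assume at the outset that $d^*(E) > 0$: if $d^*(E) = 0$, then $d^*(E)^3 - \eps < 0 \le d^*\left( E \cap (E - \varphi(g)) \cap (E - \psi(g)) \right)$ for every $g \in \Gamma$, so the set in question is all of $\Gamma$ and there is nothing to prove.

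Assuming $d^*(E) > 0$, apply \cite[Theorem 2.8]{bf} to the set $E$. This yields an ergodic measure-preserving $\Gamma$-system $\bfX = \left( X, \mX, \mu, (T_g)_{g \in \Gamma} \right)$ and a set $A \in \mX$ with $\mu(A) = d^*(E)$ such that for all $g_1, \dots, g_k \in \Gamma$,
\begin{equation*}
	d^*\left( E \cap (E - g_1) \cap \dots \cap (E - g_k) \right) \ge \mu\left( A \cap T_{g_1}^{-1}A \cap \dots \cap T_{g_k}^{-1}A \right).
\end{equation*}
In the present situation we only need the case $k = 2$, applied with $g_1 = \varphi(g)$ and $g_2 = \psi(g)$.

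Now apply Theorem \ref{thm: main} to the ergodic system $\bfX$, the set $A$, and $\eps > 0$. We conclude that the set
\begin{equation*}
	S = \left\{ g \in \Gamma : \mu\left( A \cap T_{\varphi(g)}^{-1}A \cap T_{\psi(g)}^{-1}A \right) > \mu(A)^3 - \eps \right\}
\end{equation*}
is syndetic. For any $g \in S$, combining the correspondence inequality with the equality $\mu(A) = d^*(E)$ gives
\begin{equation*}
	d^*\left( E \cap (E - \varphi(g)) \cap (E - \psi(g)) \right) \ge \mu\left( A \cap T_{\varphi(g)}^{-1}A \cap T_{\psi(g)}^{-1}A \right) > \mu(A)^3 - \eps = d^*(E)^3 - \eps,
\end{equation*}
so $S$ is contained in the set whose syndeticity is claimed. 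Since any superset of a syndetic set is syndetic (the same finitely many translates that cover $\Gamma$ still cover $\Gamma$), the proof is complete.

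Finally, a word on where the real content lies. The argument above is essentially formal given Theorem \ref{thm: main}; the only nontrivial input is the ergodic form of the correspondence principle, and this is genuinely needed. The classical Furstenberg correspondence produces a system that need not be ergodic, and one cannot simply pass to its ergodic decomposition: Theorem \ref{thm: main} would then furnish a syndetic set of good returns for each ergodic component, but there is no mechanism to intersect these (in general uncountably many) syndetic sets into a single syndetic set. The point of \cite[Theorem 2.8]{bf} is precisely that one can arrange the model to be ergodic while retaining the density inequality above, which is exactly what the transfer requires.
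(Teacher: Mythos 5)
Your proof is correct and is exactly the argument the paper intends: the paper states this result as ``an immediate consequence of Theorem~\ref{thm: main} together with a version of the Furstenberg correspondence principle for ergodic systems (see \cite[Theorem 2.8]{bf})'' and gives no further proof, and your write-up fills in precisely that transfer. Your closing remark on why the ergodic form of the correspondence principle is genuinely needed is an accurate observation and matches the reason the paper cites \cite{bf} rather than the classical correspondence principle.
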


This strongly suggests that various finitary combinatorial results hold.
Namely, we conjecture that the following finitary version of Corollary \ref{cor: main cor} is true:

\begin{conj} \label{conj: abelian groups}
	For $\alpha, \eps > 0$, there exists $N_0 = N_0(\alpha, \eps)$ with the property:
	for any finite abelian group $G$ of order $N \ge N_0$,
	any $\varphi, \psi \in \End(G)$ such that $\psi - \varphi$ is an automorphism,
	and any set $A \subseteq G$ with $|A| \ge \alpha N$,
	there exists $y \in G \setminus \{0\}$ such that
	\begin{equation*}
		\left| \left\{ x \in G : \{x, x + \varphi(y), x + \psi(y)\} \subseteq A \right\} \right| > (\alpha^3 - \eps) N.
	\end{equation*}
\end{conj}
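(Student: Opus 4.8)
The plan is to prove Conjecture~\ref{conj: abelian groups} by finitary higher-order Fourier analysis, following the scheme that yields Theorem~\ref{thm: gt} and, more generally, the combinatorial analogue of Theorem~\ref{thm: btz} over finite abelian groups via the methods of \cite{green, gt}. Since $\psi - \varphi$ is an automorphism, the three affine-linear forms $x$, $x+\varphi(y)$, $x+\psi(y)$ in the variables $(x,y)$ are pairwise non-proportional, so one expects the relevant counting operator to be controlled by the Gowers $U^2$-norm and a degree-$1$ (ordinary Fourier) arithmetic regularity lemma to be the right instrument. Write $N = |G|$, $\delta = |A|/N \ge \alpha$, and
\[
	\Lambda(y) \ = \ \mathbb{E}_{x \in G}\, \ind_A(x)\, \ind_A\!\left(x + \varphi(y)\right)\, \ind_A\!\left(x + \psi(y)\right);
\]
it suffices to produce $y \in G \setminus \{0\}$ with $\Lambda(y) > \delta^3 - \eps$, since $\delta^3 \ge \alpha^3$.

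\emph{Step 1: regularity and counting.} Apply the arithmetic regularity lemma to $\ind_A$ to obtain a decomposition $\ind_A = f_{\mathrm{str}} + f_{\mathrm{sml}} + f_{\mathrm{unf}}$ in which $f_{\mathrm{str}}(x) = F\!\left(\theta_1(x), \dots, \theta_M(x)\right)$ is $[0,1]$-valued and $L$-Lipschitz in $\theta_1(x), \dots, \theta_M(x) \in \T$ for a bounded number $M = M(\alpha,\eps)$ of characters $\theta_i \in \hat G$ and a bounded constant $L = L(\alpha,\eps)$, with $\mathbb{E}_{x\in G} f_{\mathrm{str}} = \delta$; $\norm{L^2(G)}{f_{\mathrm{sml}}} \le \eta$; and $\norm{U^2(G)}{f_{\mathrm{unf}}}$ smaller than any prescribed function of $M$ (all bounds independent of $N$). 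Expanding $\Lambda(y)$ by linearity, each term carrying a factor $f_{\mathrm{sml}}$ is $O\!\left(\norm{L^1(G)}{f_{\mathrm{sml}}}\right) = O(\eta)$ uniformly in $y$ by translation invariance, while each term carrying a factor $f_{\mathrm{unf}}$ and otherwise bounded factors should, after the substitution $s = (\psi - \varphi)(y)$ — which ranges uniformly over $G$ because $\psi - \varphi$ is an automorphism, the point where the hypothesis enters — and two applications of the Cauchy--Schwarz inequality, be bounded by $C(M, L)\,\norm{U^2(G)}{f_{\mathrm{unf}}}$. Granting this, a suitable choice of parameters gives $\Lambda(y) = \Lambda_{\mathrm{str}}(y) + \mathrm{err}(y)$, where $\Lambda_{\mathrm{str}}(y) = \mathbb{E}_{x\in G} f_{\mathrm{str}}(x)\, f_{\mathrm{str}}(x+\varphi(y))\, f_{\mathrm{str}}(x+\psi(y))$ and $|\mathrm{err}(y)| < \eps/4$ for all $y$ outside a set of density $o(1)$.

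\emph{Step 2: positivity and conclusion.} If $\theta_i(\varphi(y))$ and $\theta_i(\psi(y))$ lie within $\rho$ of $0$ in $\T$ for all $i$, then Lipschitz continuity of $F$ yields $\Lambda_{\mathrm{str}}(y) \ge \mathbb{E}_{x\in G} f_{\mathrm{str}}(x)^3 - 3L\sqrt{M}\,\rho \ge \left(\mathbb{E}_{x\in G} f_{\mathrm{str}}\right)^3 - 3L\sqrt{M}\,\rho = \delta^3 - 3L\sqrt{M}\,\rho$, using Jensen's inequality for the convex function $t \mapsto t^3$ on $[0,1]$; taking $\rho = \rho(\alpha,\eps)$ small enough makes this $> \delta^3 - \eps/2$. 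The set of such $y$ is the Bohr set $B = B\!\left(\{\theta_i \circ \varphi,\ \theta_i \circ \psi\}_{i=1}^M;\, \rho\right)$, which is syndetic in $G$ with a constant depending only on $M$ and $\rho$ and has density at least $(\rho/C)^{2M} > 0$; in particular it contains nonzero elements once $N$ exceeds a bound depending only on $\alpha$ and $\eps$. Intersecting $B$ with the density-$(1 - o(1))$ set where $|\mathrm{err}(y)| < \eps/4$ then produces, for $N$ large, some $y \ne 0$ with $\Lambda(y) > \delta^3 - \eps \ge \alpha^3 - \eps$.

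\emph{The main obstacle.} Everything above is routine except the generalized-von-Neumann bound invoked in Step 1 when $\varphi$ or $\psi$ is not an automorphism of $G$, i.e.\ when $\ker\varphi$ or $\ker\psi$ is nontrivial: there two of the three forms coincide on a nontrivial subgroup, the Cauchy--Schwarz scheme leaves an uncontrolled autocorrelation of $f_{\mathrm{unf}}$ (which is small in $U^2$ but not in $L^2$), and — equivalently — the Fourier expansion of $\Lambda$ acquires a factor $\sqrt{[G : \varphi(G)]\,[G : \psi(G)]}$ that can be of size $N$. When $\varphi$ and $\psi$ are both automorphisms the bound is immediate and one recovers a finitary shadow of \cite[Theorem 1.11]{abs}; the extra input needed in the singular case is exactly what the extension trick and the description of Mackey groups of quasi-affine cocycles supply on the ergodic side. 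I expect a complete proof to require either (a) a preliminary reduction separating off the $y \in \ker\varphi \cup \ker\psi$, where $\Lambda(y)$ is a two-point count treatable by elementary Fourier analysis — note $\mathbb{E}_{y \in \ker\varphi} \Lambda(y) \ge \delta^2$ since then $\varphi(y) = 0$ and $(\psi - \varphi)(y)$ ranges over a subgroup, and symmetrically for $\ker\psi$ — followed by a Cauchy--Schwarz scheme adapted to the subgroups $\varphi(G)$ and $\psi(G)$ for the remaining $y$; or (b) a $\varphi, \psi$-equivariant refinement of the arithmetic regularity lemma recording how $\varphi$ and $\psi$ act on the characters $\theta_i$, in the spirit of a finitary Conze--Lesigne / Host--Kra structure theorem. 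Making either step quantitatively uniform in $N$, $\varphi$, and $\psi$ is the crux of the conjecture.
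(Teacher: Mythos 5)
The statement you were asked to prove is Conjecture~\ref{conj: abelian groups}, which the paper does not prove and does not claim to prove: it is stated explicitly as an open conjecture, and the paper remarks at the end of Section~\ref{sec: combinatorics} that its ergodic-theoretic methods do not immediately transfer to the finitary setting and that resolving the conjecture would require suitable analogues from higher-order Fourier analysis. So there is no proof in the paper against which to compare yours, and what you have written is --- as you yourself say --- a program rather than a proof.

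Your diagnosis of the obstacle is accurate. The generalized von Neumann step genuinely fails when $\ker\varphi$ or $\ker\psi$ is nontrivial, because two of the three forms $x$, $x+\varphi(y)$, $x+\psi(y)$ become dependent on a coset of a subgroup and the Cauchy--Schwarz scheme leaves an uncontrolled $L^2$-sized autocorrelation of the uniform part; the automorphism case is indeed covered by \cite[Theorem~7.3]{bsst}, which the paper cites as the positive evidence. One point worth sharpening beyond what you wrote: the ergodic proof of Theorem~\ref{thm: main} routes through the Conze--Lesigne factor $\mZ^2$ joined with the invariant $\sigma$-algebras $\mI_\varphi$ and $\mI_\psi$ (via \cite[Theorem~4.10]{abs}), not merely the Kronecker factor, precisely because singularity of $\varphi$ or $\psi$ pushes the characteristic factor up a degree. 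Finitarily, this suggests that a degree-one arithmetic regularity lemma --- in which $f_{\mathrm{str}}$ is a Lipschitz function of finitely many characters --- is not just technically awkward but in principle the wrong level of structure for the singular case, so your ``option~(b)'' (a $\varphi,\psi$-equivariant quadratic / Conze--Lesigne-type structure theorem) is probably the right one; but then Step~2, which relies on $f_{\mathrm{str}}$ being a function of characters and concludes via a Bohr set, would also need to be reworked at the quadratic level. In short, your analysis of where the difficulty lies agrees with the paper's own assessment, and, as you acknowledge, the conjecture remains open.
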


A natural conjecture in the setting of $\Z^d$, building on Corollary \ref{cor: Z^d}, is:

\begin{conj} \label{conj: Z^d}
	Let $M_1, M_2 \in M_{d \times d}(\Z)$ such that $M_2 - M_1$ is nonsingular.
	For any $\alpha, \eps > 0$, there exists $N_0 = N_0(\alpha, \eps, M_1, M_2) \in \N$ with the property:
	for any $N \ge N_0$ and any set $A \subseteq \{1, \dots, N\}^d$ with $|A| \ge \alpha N^d$,
	there exists $\vec{y} \in \Z^d \setminus \{0\}$ such that
	\begin{equation*}
		\left| \left\{ \vec{x} \in \Z^d : \left\{ \vec{x}, \vec{x} + M_1 \vec{y}, \vec{x} + M_2 \vec{y} \right\} \subseteq A \right\} \right| > (\alpha^3 - \eps) N^d.
	\end{equation*}
\end{conj}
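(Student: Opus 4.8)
Since (as noted after Theorem \ref{thm: gt}) there is no known reduction between finitary density statements of this kind and their ergodic counterparts, the plan is to prove Conjecture \ref{conj: Z^d} directly, transporting the strategy behind Corollary \ref{cor: Z^d} into the finitary setting: the Green--Tao arithmetic regularity lemma plays the role of the ergodic structure theory, Bohr-type sets play the role of the quasi-affine characteristic factor, and Jensen's inequality replaces the convexity argument responsible for the exponent $3$.

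First I would transfer from $\{1,\dots,N\}^d$ to $(\Z/M\Z)^d$: choose a prime $M = (1+\gamma)N$ with $\gamma = \gamma(\eps,d,M_1,M_2) > 0$ small, so that $M$ is coprime to $\det(M_2 - M_1) \ne 0$ and configurations $\{\vec x, \vec x + M_1\vec y, \vec x + M_2\vec y\}$ with $\vec x \in \{1,\dots,N\}^d$ and $\|\vec y\|_\infty \le \delta N$ do not wrap around. Viewing $A$ inside $(\Z/M\Z)^d$ and absorbing the (arbitrarily small, for $\gamma$ small) loss of density into the error, it suffices to find a nonzero $\vec y$ with $\|\vec y\|_\infty \le \delta N$ such that $\mathbb{E}_{\vec x}\, f(\vec x) f(\vec x + M_1\vec y) f(\vec x + M_2\vec y) > \alpha^3 - \eps/2$, where $f = \ind_A$ on $(\Z/M\Z)^d$ and expectations are averages over $(\Z/M\Z)^d$. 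Then apply the $U^{s+1}$ arithmetic regularity lemma to write $f = f_{\mathrm{str}} + f_{\mathrm{err}} + f_{\mathrm{unf}}$, where $f_{\mathrm{str}} = \mathbb{E}[f \mid \B]$ is the conditional expectation onto a $\sigma$-algebra $\B$ of bounded complexity generated by level sets of finitely many characters ($s=1$) or $(s-1)$-step polynomial phases (in general), hence $f_{\mathrm{str}}$ takes values in $[0,1]$ with $\mathbb{E} f_{\mathrm{str}} = \alpha$; $\|f_{\mathrm{err}}\|_{L^2}$ is small; and $\|f_{\mathrm{unf}}\|_{U^{s+1}}$ is smaller than any prescribed function of the complexity. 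Fix a cutoff $\lambda \ge 0$ with $\mathbb{E}\lambda = 1$, supported essentially inside the box $\{\|\vec y\|_\infty \le \delta N\}$ and adapted to the phases defining $\B$ (so that $f_{\mathrm{str}}(\vec x + M_i \vec y)$ is uniformly close to $f_{\mathrm{str}}(\vec x)$ for all $\vec y$ in its support), with $\|\widehat\lambda\|_{\ell^1}$ bounded in terms of $d$ and the complexity alone.

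The core estimate is $\mathbb{E}_{\vec x,\vec y}\, \lambda(\vec y) f(\vec x) f(\vec x + M_1\vec y) f(\vec x + M_2\vec y) > \alpha^3 - \eps/2$. Expanding $f = f_{\mathrm{str}} + f_{\mathrm{err}} + f_{\mathrm{unf}}$ in each of the three slots, the terms involving $f_{\mathrm{err}}$ are $O(\|f_{\mathrm{err}}\|_{L^2})$ by Cauchy--Schwarz (using $\lambda \ge 0$, $\mathbb{E}\lambda = 1$); the terms involving $f_{\mathrm{unf}}$ are at most $\|\widehat\lambda\|_{\ell^1}$ times a power of $\|f_{\mathrm{unf}}\|_{U^{s+1}}$ by a weighted generalized von Neumann inequality for the $\vec y$-weight $\lambda$, hence negligible since the regularity lemma makes $\|f_{\mathrm{unf}}\|_{U^{s+1}}$ as small as we wish relative to the complexity on which $\|\widehat\lambda\|_{\ell^1}$ depends; and the purely structured term $\mathbb{E}_{\vec x,\vec y}\, \lambda(\vec y) f_{\mathrm{str}}(\vec x) f_{\mathrm{str}}(\vec x + M_1\vec y) f_{\mathrm{str}}(\vec x + M_2\vec y)$ is, by the adaptedness of $\lambda$, within $\eps/4$ of $\mathbb{E}_{\vec x}\, f_{\mathrm{str}}(\vec x)^3 \ge (\mathbb{E}_{\vec x}\, f_{\mathrm{str}}(\vec x))^3 = \alpha^3$ by Jensen. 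Discarding the $\vec y = \vec 0$ contribution, which after normalization is $O\big(\alpha / (\rho(\B)\, M^d)\big) = o_N(1)$ where $\rho(\B)$ is the (complexity-bounded) density of $\mathrm{supp}\,\lambda$, and then applying pigeonhole over $\mathrm{supp}\,\lambda \setminus \{\vec 0\}$ produces the desired nonzero $\vec y$; this set is nonempty once $N \ge N_0(\alpha,\eps,M_1,M_2)$ because $\mathrm{supp}\,\lambda$ meets the box $\{\|\vec y\|_\infty \le \delta N\}$ in a nonzero point by Dirichlet's simultaneous approximation theorem.

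The weighted generalized von Neumann bound, and with it the value of $s$, is where the hypothesis on $M_2 - M_1$ enters and where I expect the main obstacle. For the purpose of this estimate one may, after replacing $\vec y$ by $(M_2 - M_1)^{-1}\vec y$ over $(\Z/M\Z)^d$, normalize $M_2 - M_1 = I$; a direct Fourier-analytic computation then shows that $s = 1$ suffices as soon as at least two of $M_1, M_2, M_2 - M_1$ are nonsingular --- for instance whenever $M_1$ or $M_2$ is nonsingular --- since one can solve the linear constraints on the spectral parameters after Fourier expansion. The recalcitrant case is when $M_1$ and $M_2$ are \emph{both} singular: this is precisely the case for which, in the ergodic setting, the matrix manipulations of \cite[Theorem 7.1]{abs} fail for $d \ge 3$ and the coordinate-free argument of the present paper is needed. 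I would handle it by following that argument: either decompose along the common refinement of $\ker M_1$ and $\ker M_2$ (a Fubini-type reduction, awkward to organize uniformly in $d$), or push the regularity analysis to $s = 2$, where the structured factor is governed by two-step nilsequences --- the finitary shadow of the quasi-affine (Conze--Lesigne) factor underlying Theorem \ref{thm: main}. Carrying out one of these options cleanly, uniformly over all $d$ and all admissible $(M_1,M_2)$, is the crux of the problem.
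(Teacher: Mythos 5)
First, a point of order: the statement you were given is stated in the paper only as a \emph{conjecture}. The author explicitly remarks that the ergodic-theoretic methods of the paper ``are not immediately applicable in the finitary setting'' and that resolving Conjecture \ref{conj: Z^d} would require replacing the dynamical tools with analogues from higher order Fourier analysis; only the ergodic counterpart (Corollary \ref{cor: Z^d}) is proved. Your submission is accordingly a program rather than a proof, and you say so yourself: you end by identifying the ``crux of the problem'' as unresolved. That alone means the conjecture remains open after your argument.

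The most serious mathematical gap is the treatment of the structured term. You claim that for $\lambda$ ``adapted to the phases defining $\B$'' one has $\mathbb{E}_{\vec x,\vec y}\,\lambda(\vec y)\, f_{\mathrm{str}}(\vec x)\, f_{\mathrm{str}}(\vec x+M_1\vec y)\, f_{\mathrm{str}}(\vec x+M_2\vec y)$ within $\eps/4$ of $\mathbb{E}_{\vec x} f_{\mathrm{str}}(\vec x)^3\ge\alpha^3$. That mechanism is available only at $s=1$, where $\B$ is generated by linear phases and a Bohr-set restriction on $\vec y$ really does force $f_{\mathrm{str}}(\vec x+M_i\vec y)\approx f_{\mathrm{str}}(\vec x)$. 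At $s=2$ the structured part is a $2$-step object, and no restriction of $\vec y$ to a Bohr set makes it approximately invariant; if that naive mechanism worked at every degree it would yield the lower bound $\mu(A)^{k+1}$ for configurations of every length, contradicting Theorem \ref{thm: bhk}(2). The ergodic proof does not argue this way: the exponent $3$ in Theorem \ref{thm: averaging form} comes from the key inequality \eqref{eq: key inequality}, which requires (i) the Mackey group associated with $\{\varphi,\psi\}$ to split as a product $M_\varphi\times M_\psi$ --- achieved only after passing to an extension with complete and divisible discrete spectrum via Theorems \ref{thm: complete divisible extension} and \ref{thm: Mackey prod} --- and (ii) Chu's inequality $\int f\cdot\E{f}{\mathcal{A}}\cdot\E{f}{\mathcal{B}}\,d\mu\ge\left(\int f\,d\mu\right)^3$ for two distinct sub-$\sigma$-algebras, which is genuinely different from Jensen applied to a single conditional expectation. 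Your proposal contains no finitary counterpart of either step; ``push the regularity analysis to $s=2$'' is precisely where a new idea is required. Two secondary issues: the weighted generalized von Neumann inequality when $M_1$ and $M_2$ are \emph{both} singular is asserted rather than proved (the linear forms are then degenerate in $\vec y$, and the standard Cauchy--Schwarz scheme does not apply verbatim), and the normalization $M_2-M_1=I$ via the substitution $\vec y\mapsto(M_2-M_1)^{-1}\vec y$ over $\Z/M\Z$ replaces $M_1,M_2$ by matrices mod $M$, so the ``at least two of $M_1,M_2,M_2-M_1$ nonsingular'' dichotomy must be re-examined after that change of variables.
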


If one imposes the additional condition in Conjecture \ref{conj: abelian groups} that $\varphi$ and $\psi$ are automorphisms,
then the conjecture is known to be true by \cite[Theorem 7.3]{bsst}.
Similarly, if the matrices $M_1$, $M_2$, and $M_2 - M_1$ in Conjecture \ref{conj: Z^d} are all nonsingular, then
the conclusion holds by \cite[Theorem 1.1]{bsst}.
(For the particular matrices
\begin{equation*}
	M_1 = \left( \begin{array}{cc} 1 & 0 \\ 0 & 1 \end{array} \right) \qquad \text{and} \qquad
	M_2 = \left( \begin{array}{cc} 0 & -1 \\ 1 & 0 \end{array} \right),
\end{equation*}
this was also shown by \cite[Theorem 1]{kovac}.)

The ergodic theoretic methods used in this paper are not immediately applicable in the finitary setting.
In order to resolve Conjectures \ref{conj: abelian groups} and \ref{conj: Z^d},
one should replace the dynamical tools with suitable analogues from higher order Fourier analysis.

%%%%%%%%%%%%%%%%%%%%%%%%%%%%%%%%%%%%%%%%%%%%%%%%%%%%%%%%

\subsection{Outline of the paper}

The structure of the paper is as follows.
Section \ref{sec: prelim} is preparatory, collecting the relevant background material
that will be used in the proof of Theorem \ref{thm: main}.
The main technical results appear in Sections \ref{sec: extensions} and \ref{sec: Mackey},
where we prove the existence of extensions in which the Kronecker and quasi-affine factors
interact nicely with a fixed pair of endomorphisms $\{\varphi, \psi\}$.
We then prove a formula for the limit of double ergodic averages associated with $\{\varphi, \psi\}$ in Section \ref{sec: limit}.
Finally, we complete the proof of Theorem \ref{thm: main} in Section \ref{sec: proof}.

%%%%%%%%%%%%%%%%%%%%%%%%%%%%%%%%%%%%%%%%%%%%%%%%%%%%%%%%
% ---- PRELIMINARIES ---- %
%%%%%%%%%%%%%%%%%%%%%%%%%%%%%%%%%%%%%%%%%%%%%%%%%%%%%%%%

\section{Preliminaries} \label{sec: prelim}

%%%%%%%%%%%%%%%%%%%%%%%%%%%%%%%%%%%%%%%%%%%%%%%%%%%%%%%%

\subsection{Uniform Ces\`{a}ro limits and the van der Corput differencing lemma}

Just as Furstenberg's multiple recurrence theorem (Theorem \ref{thm: mult rec}) establishes a recurrence result
by working with a multiple ergodic average,
we will prove Theorem \ref{thm: main} by studying an associated double ergodic average.
A sequence $(v_g)_{g \in \Gamma}$ in a (real or complex) topological vector space $V$ has \emph{uniform Ces\`{a}ro limit} equal to $v \in V$, denoted $\UClim_{g \in \Gamma}{v_g} = v$, if for any F{\o}lner sequence $(\Phi_N)_{N \in \N}$ in $\Gamma$, one has
\begin{equation*}
	\frac{1}{|\Phi_N|} \sum_{g \in \Phi_N}{v_g} \tendsto{N \to \infty} v.
\end{equation*}
In the group $\Gamma = \Z$, the uniform Ces\`{a}ro limit corresponds to the limit of averages appearing in Theorem \ref{thm: mult rec}, i.e.
\begin{equation*}
	\UClim_{n \in \Z}{v_n} = \lim_{N - M \to \infty}{\frac{1}{N-M} \sum_{n=M}^{N-1}{v_n}}
\end{equation*}

One of the main tools for handling uniform Ces\`{a}ro limits is the following version of the van der Corput differencing lemma:

\begin{lem}[cf. \cite{abb}, Lemma 2.2] \label{lem: vdC}
	Let $\Gamma$ be a countable discrete abelian group,
	and let $(u_g)_{g \in \Gamma}$ be a bounded sequence in a Hilbert space $\Hil$.
	If
	\begin{equation*}
		\xi_h := \UClim_{g \in \Gamma}{\innprod{u_{g+h}}{u_g}}
	\end{equation*}
	exists for every $h \in \Gamma$ and $\UClim_{h \in \Gamma}{\xi_h} = 0$,
	then $\UClim_{g \in \Gamma}{u_g} = 0$.
\end{lem}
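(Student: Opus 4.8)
The plan is to run the standard van der Corput argument, adapted to a general countable abelian group via averaging over Følner sequences. Fix a Følner sequence $(\Phi_N)_{N \in \N}$ in $\Gamma$ and let $M := \sup_{g} \norm{}{u_g} < \infty$. The key idea is a two-parameter averaging trick: for a fixed finite set $H \subseteq \Gamma$ (which we will take to be a long initial piece of another Følner sequence), the Følner property lets us replace $\frac{1}{|\Phi_N|}\sum_{g \in \Phi_N} u_g$ by $\frac{1}{|H|}\sum_{h \in H}\left( \frac{1}{|\Phi_N|}\sum_{g \in \Phi_N} u_{g+h} \right)$ up to an error that tends to $0$ as $N \to \infty$ (the symmetric-difference bound $|(\Phi_N - h)\triangle \Phi_N|/|\Phi_N| \to 0$ controls each shift, and there are only $|H|$ of them). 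So it suffices to bound $\left\| \frac{1}{|H||\Phi_N|}\sum_{h \in H}\sum_{g \in \Phi_N} u_{g+h} \right\|$.

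Next I would apply the Cauchy–Schwarz inequality in $\Hil$ to this average over $g \in \Phi_N$: writing $w_g := \frac{1}{|H|}\sum_{h \in H} u_{g+h}$, we have $\left\| \frac{1}{|\Phi_N|}\sum_{g \in \Phi_N} w_g \right\|^2 \le \frac{1}{|\Phi_N|}\sum_{g \in \Phi_N} \norm{}{w_g}^2$. Expanding the square,
\begin{equation*}
	\frac{1}{|\Phi_N|}\sum_{g \in \Phi_N} \norm{}{w_g}^2 = \frac{1}{|H|^2}\sum_{h, h' \in H} \frac{1}{|\Phi_N|}\sum_{g \in \Phi_N} \innprod{u_{g+h}}{u_{g+h'}}.
\end{equation*}
For fixed $h, h'$, a further application of the Følner property shifts the inner average so that $\frac{1}{|\Phi_N|}\sum_{g \in \Phi_N}\innprod{u_{g+h}}{u_{g+h'}} = \frac{1}{|\Phi_N|}\sum_{g \in \Phi_N}\innprod{u_{g+(h-h')}}{u_{g}} + o_N(1)$, and as $N \to \infty$ along $(\Phi_N)$ this converges to $\xi_{h-h'}$ by hypothesis. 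Passing to the limsup in $N$, we obtain
\begin{equation*}
	\limsup_{N \to \infty} \left\| \frac{1}{|\Phi_N|}\sum_{g \in \Phi_N} u_g \right\|^2 \le \frac{1}{|H|^2}\sum_{h, h' \in H} \xi_{h-h'}.
\end{equation*}

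Finally I would choose $H = \Psi_K$ for a Følner sequence $(\Psi_K)_{K \in \N}$ and let $K \to \infty$. The right-hand side $\frac{1}{|\Psi_K|^2}\sum_{h, h' \in \Psi_K} \xi_{h-h'}$ should converge to $\UClim_{h \in \Gamma} \xi_h = 0$: the substitution $h \mapsto h' + k$ and the Følner property of $(\Psi_K)$ reduce it to an average of $\xi_k$ over a Følner-type set. (One must check $(\xi_h)$ is bounded — indeed $|\xi_h| \le M^2$ — so these manipulations are legitimate, and a minor point is that averaging $\xi_{h-h'}$ over the "difference set" of $\Psi_K$ genuinely computes the uniform Cesàro limit; this is where one invokes that the limit $\UClim_h \xi_h$ is assumed to exist and equals $0$ independent of the Følner sequence.) Since the left-hand side does not depend on $H$, it is $\le 0$, hence $= 0$; as the Følner sequence $(\Phi_N)$ was arbitrary, $\UClim_{g \in \Gamma} u_g = 0$.

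The main obstacle is purely bookkeeping rather than conceptual: carefully tracking the three nested limits (first $N \to \infty$ with $H$ fixed, then $K \to \infty$) and ensuring at each stage that the Følner error terms are uniform enough to vanish — in particular that swapping the order of "$\limsup_N$" and "$\sum_{h,h' \in H}$" is valid (it is, since $H$ is finite) and that the double average of $\xi_{h-h'}$ over $\Psi_K \times \Psi_K$ really does see the uniform Cesàro limit of $(\xi_h)$.
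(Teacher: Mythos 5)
Your proof follows the standard van der Corput argument (two-parameter averaging, Cauchy--Schwarz, then a Følner average in the shift parameter) and is correct in outline; the paper itself does not prove this lemma but cites \cite[Lemma~2.2]{abb}, which uses the same argument.

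The one step that deserves more care is the last limit, and you flag it yourself. You claim that
\begin{equation*}
\frac{1}{|\Psi_K|^2}\sum_{h,h' \in \Psi_K}\xi_{h-h'} \;\tendsto{K\to\infty}\; 0,
\end{equation*}
and suggest that the substitution $h \mapsto h' + k$ reduces it to a Cesàro average of $\xi_k$ over a Følner-type set. That is not quite right: the double average equals the weighted sum $\frac{1}{|\Psi_K|}\sum_{k \in \Psi_K - \Psi_K}\frac{|\Psi_K \cap (\Psi_K + k)|}{|\Psi_K|}\xi_k$, which is not the Cesàro average of $\xi$ over a single approximately invariant set. What actually makes the limit zero is a uniformity principle encoded in the hypothesis $\UClim_h\xi_h = 0$. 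Concretely, $\UClim_h\xi_h = 0$ is equivalent to the following: for every $\eps > 0$ there exist a finite set $F \subseteq \Gamma$ and $\delta > 0$ such that for every finite $\Phi \subseteq \Gamma$ satisfying $\max_{f \in F}\frac{|(\Phi+f)\triangle\Phi|}{|\Phi|} < \delta$ one has $\left|\frac{1}{|\Phi|}\sum_{h \in \Phi}\xi_h\right| < \eps$. (If this failed, a diagonal construction over an enumeration of $\Gamma$ would produce a Følner sequence along which the averages of $\xi$ do not tend to zero.) Now write the double average as $\frac{1}{|\Psi_K|}\sum_{h'\in\Psi_K}\frac{1}{|\Psi_K - h'|}\sum_{k \in \Psi_K - h'}\xi_k$, note that translating a finite set leaves all its Følner quantities unchanged, and conclude that once $K$ is large enough for $\Psi_K$ to be $(F,\delta)$-invariant, each inner average is $< \eps$ in modulus uniformly in $h'$. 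With that inserted, the remainder of your argument is complete.
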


%%%%%%%%%%%%%%%%%%%%%%%%%%%%%%%%%%%%%%%%%%%%%%%%%%%%%%%%

\subsection{Host--Kra factors}

Let $\Gamma$ be a countable discrete abelian group,
and let $\bfX = \left( X, \mX, \mu, (T_g)_{g \in \Gamma} \right)$ be a measure-preserving $\Gamma$-system.
A \emph{factor} of $\bfX$ is a $(T_g)_{g \in \Gamma}$-invariant sub-$\sigma$-algebra $\mY \subseteq \mX$.
We may also refer to the system $\bfY = \left( X, \mY, \mu|_{\mY}, (T_g)_{g \in \Gamma} \right)$,
or any system isomorphic to $\bfY$, as a factor of $\bfX$.

The most important family of factors for our consideration is the family of \emph{Host--Kra factors}.
These factors are defined in terms of a family of seminorms, known as the \emph{Host--Kra seminorms} \cite{hk},
which are an ergodic-theoretic counterpart to the Gowers uniformity norms \cite{gowers} in additive combinatorics.

Let $\bfX = \left( X, \mX, \mu, (T_g)_{g \in \Gamma} \right)$ be a measure-preserving $\Gamma$-system.
For $g \in \Gamma$ and $f : X \to \C$, define $\Delta_gf := \overline{f} \cdot T_g f$.
Then for $k \in \N$ and $g_1, \dots, g_k \in \Gamma$, we define $\Delta_{g_1, \dots, g_k}$ inductively by
$\Delta_{g_1, \dots, g_k}f := \Delta_{g_k} \left( \Delta_{g_1, \dots, g_{k-1}}f \right)$.
For $f \in L^{\infty}(\mu)$ and $k \in \N$, we define the \emph{Host--Kra seminorm of order $k$} by
\begin{equation*}
	\seminorm{U^k}{f}^{2^k} = \UClim_{(g_1, \dots, g_k) \in \Gamma^k}{\int_X{\Delta_{g_1, \dots, g_k}f~d\mu}}.
\end{equation*}

It is shown that $\seminorm{U^k}{\cdot}$ is indeed a seminorm for each $k \in \N$ in \cite[Appendix A]{btz1}.
The corresponding Host--Kra factors are guaranteed by the following proposition:

\begin{prop}[\cite{btz1}, Proposition 1.10]
	Let $\Gamma$ be a countable discrete abelian group, let $\bfX = \left( X, \mX, \mu, (T_g)_{g \in \Gamma} \right)$
	be a measure-preserving $\Gamma$-system, and let $k \ge 0$.
	There exists a factor $\mZ^k$ with the property that for every $f\in L^\infty(\mu)$, one has
	\begin{equation*}
		\seminorm{U^{k+1}}{f} = 0 \iff \E{f}{\mZ^k} = 0.
	\end{equation*}
\end{prop}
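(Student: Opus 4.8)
The plan is to realize $\mZ^k$ as the factor generated by the \emph{dual functions} of order $k+1$, reading off both implications from the Gowers--Cauchy--Schwarz inequality for the Host--Kra seminorms. First I would record that inequality: writing $h_\alpha := \sum_{i \in \alpha} h_i$ for $\alpha \subseteq \{1, \dots, k+1\}$ and $\mathcal{C}$ for complex conjugation, for any bounded functions $(f_\alpha)_{\alpha \subseteq \{1,\dots,k+1\}}$ on $X$ the uniform Ces\`{a}ro limit
\begin{equation*}
	\Lambda\big( (f_\alpha)_\alpha \big) := \UClim_{(h_1,\dots,h_{k+1}) \in \Gamma^{k+1}} \int_X \prod_{\alpha \subseteq \{1,\dots,k+1\}} \mathcal{C}^{|\alpha|} T_{h_\alpha} f_\alpha \, d\mu
\end{equation*}
exists, reduces to $\seminorm{U^{k+1}}{f}^{2^{k+1}}$ when every $f_\alpha = f$, and satisfies $\big| \Lambda((f_\alpha)_\alpha) \big| \le \prod_\alpha \seminorm{U^{k+1}}{f_\alpha}$. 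This is precisely the computation --- an iterated application of the van der Corput lemma (Lemma \ref{lem: vdC}) --- that underlies the statement that $\seminorm{U^{k+1}}{\cdot}$ is a seminorm, so I would quote it from \cite[Appendix A]{btz1}.

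Next, for $f \in L^\infty(\mu)$ I introduce the \emph{dual function}
\begin{equation*}
	\mathcal{D}f := \UClim_{(h_1,\dots,h_{k+1}) \in \Gamma^{k+1}} \prod_{\emptyset \ne \alpha \subseteq \{1,\dots,k+1\}} \mathcal{C}^{|\alpha|+1} T_{h_\alpha} f,
\end{equation*}
the limit taken in $L^2(\mu)$ (existence follows from the preceding paragraph tested against $L^2$-functions, together with the uniform $L^\infty$-bound on the partial averages). Its relevant properties are: $\|\mathcal{D}f\|_{L^\infty} \le \|f\|_{L^\infty}^{2^{k+1}-1}$; for every $g \in L^\infty(\mu)$ one has $\langle g, \mathcal{D}f \rangle = \Lambda(g, f, \dots, f)$, whence $|\langle g, \mathcal{D}f \rangle| \le \seminorm{U^{k+1}}{g} \seminorm{U^{k+1}}{f}^{2^{k+1}-1}$ and, taking $g = f$, $\langle f, \mathcal{D}f \rangle = \seminorm{U^{k+1}}{f}^{2^{k+1}}$; and $T_g \mathcal{D}f = \mathcal{D}(T_g f)$, so that although individual dual functions need not be $(T_g)$-invariant, $(T_g)_{g \in \Gamma}$ permutes the family $\{ \mathcal{D}f : f \in L^\infty(\mu) \}$. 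I would then let $\mZ^k$ be the sub-$\sigma$-algebra of $\mX$ generated by all dual functions; by the permutation property $\mZ^k$ is $(T_g)_{g \in \Gamma}$-invariant, hence a factor.

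The implication $\E{f}{\mZ^k} = 0 \Rightarrow \seminorm{U^{k+1}}{f} = 0$ is then immediate, since $\mathcal{D}f$ is $\mZ^k$-measurable: $\seminorm{U^{k+1}}{f}^{2^{k+1}} = \langle f, \mathcal{D}f \rangle = \langle \E{f}{\mZ^k}, \mathcal{D}f \rangle = 0$. For the converse, suppose $\seminorm{U^{k+1}}{f} = 0$; the Gowers--Cauchy--Schwarz bound above immediately gives that $f$ is orthogonal to every single dual function. The main obstacle --- the step that genuinely requires work --- is to upgrade this to orthogonality of $f$ to all of $L^2(\mZ^k)$, i.e.\ to show that the closed linear span of the dual functions already equals $L^2(\mZ^k)$, equivalently that every finite product $\mathcal{D}g_1 \cdots \mathcal{D}g_m$ is orthogonal to the $U^{k+1}$-nullspace. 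This is not a formal consequence of the single-function case, because $\seminorm{U^{k+1}}{\cdot}$-nullity is not preserved under multiplication by bounded functions; controlling $\langle f, \mathcal{D}g_1 \cdots \mathcal{D}g_m \rangle$ instead requires expanding each dual function into its defining average over a fresh block of $k+1$ translation variables and applying the Gowers--Cauchy--Schwarz inequality block by block, with care in distributing the auxiliary variables, and I would invoke this from \cite[Appendix A]{btz1} (following \cite{hk}). As a consistency check, for $k = 0$ the dual function is simply $\mathcal{D}f = \UClim_{h \in \Gamma} T_h f = \E{f}{\mZ^0}$ with $\mZ^0$ the $\sigma$-algebra of $(T_g)$-invariant sets, so $\seminorm{U^1}{f}^2 = \|\E{f}{\mZ^0}\|_{L^2}^2$ and the statement is elementary; the general case could also be approached inductively, using the recursion $\seminorm{U^{k+1}}{f}^{2^{k+1}} = \UClim_{h \in \Gamma} \seminorm{U^k}{\Delta_h f}^{2^k}$ and the inductively constructed factor $\mZ^{k-1}$.
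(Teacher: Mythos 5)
The paper does not supply a proof of this proposition: it is quoted directly from \cite[Proposition 1.10]{btz1}, and the construction there (which in turn adapts \cite{hk}) is exactly the dual-function approach you outline. So in spirit your proposal is the ``paper's proof.'' Your account of the mechanism is essentially accurate: the Gowers--Cauchy--Schwarz inequality and the dual function $\mathcal{D}f$ with $\langle f, \mathcal{D}f\rangle = \seminorm{U^{k+1}}{f}^{2^{k+1}}$, the conjugation bookkeeping $\overline{\mathcal{C}^{|\alpha|+1}u} = \mathcal{C}^{|\alpha|}u$ making $\langle g, \mathcal{D}f\rangle = \Lambda(g,f,\dots,f)$, the covariance $T_g\mathcal{D}f = \mathcal{D}(T_gf)$ which makes the generated $\sigma$-algebra a factor, and the $k=0$ sanity check $\seminorm{U^1}{f}^2 = \|\E{f}{\mI}\|_{L^2}^2$ are all correct. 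You also put your finger on the genuine content: the easy direction is formal, while the reverse direction requires showing $f$ with $\seminorm{U^{k+1}}{f}=0$ is orthogonal not just to individual dual functions but to products $\mathcal{D}g_1\cdots\mathcal{D}g_m$ (since these, not the dual functions alone, span $L^2(\mZ^k)$), and this is where the block-by-block expansion in \cite[Appendix A]{btz1} does its work. One small imprecision worth flagging: you describe ``closed linear span of dual functions $= L^2(\mZ^k)$'' and ``products of dual functions orthogonal to the nullspace'' as equivalent reformulations of the remaining gap; they are not equivalent statements (the first is strictly stronger), but either one suffices, and in the cited source the argument runs through the second. A second, more minor point: $L^2$-norm convergence of the averages defining $\mathcal{D}f$ does not follow merely from weak convergence plus an $L^\infty$ bound; the strong convergence requires an iterated van der Corput argument of its own, though weak convergence is already enough to define $\mZ^k$ and run your argument. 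Since the paper cites this result rather than reproving it, deferring the same technical core to \cite{btz1} is entirely appropriate here.
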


If $\bfX = \left( X, \mX, \mu, (T_g)_{g \in \Gamma} \right)$ is an ergodic system,
then the first several Host--Kra factors are as follows:
\begin{itemize}
	\item	$\mZ^0$ is the trivial factor consisting of null and co-null subsets of $X$.
		(If $\bfX$ is not ergodic, then $\mZ^0 = \mI$, the $\sigma$-algebra of all $(T_g)_{g \in \Gamma}$-invariant sets.)
	\item	$\mZ^1$ is the \emph{Kronecker factor}.
		This is the smallest $\sigma$-algebra with respect to which all eigenfunctions are measurable.
		As a measure-preserving system, $\bfZ^1$ is isomorphic to a rotational system.
		That is, there exists a compact abelian group $Z$ and a homomorphism $\alpha : \Gamma \to Z$ with dense image
		such that $\bfZ^1$ is isomorphic to the system $\bfZ = (Z, \alpha)$, where $Z$ is equipped with the Haar measure
		and $g \in \Gamma$ acts by the rotation $z \mapsto z + \alpha_g$.
		Because of its relationship to the Kronecker factor, we refer to any ergodic rotational system $\bfZ = (Z, \alpha)$
		as an \emph{ergodic Kronecker system}.
		Such systems are uniquely determined (up to isomorphism) by their \emph{discrete spectrum}
		(i.e., the group of eigenvalues),
		which is given by
		\begin{equation*}
			\Lambda = \left\{ \lambda \circ \alpha : \lambda \in \hat{Z} \right\}.
		\end{equation*}
		Moreover, the topological system underlying any ergodic Kronecker system is uniquely ergodic.
		See \cite[Section 2.4]{abb} for a more in-depth discussion of Kronecker systems
		in the context of actions of countable discrete abelian groups.
	\item	$\mZ^2$ is the \emph{quasi-affine} (or \emph{Conze--Lesigne}) factor.
		As a measure-preserving system, $\bfZ^2$ is isomorphic to a group extension of the Kronecker factor,
		$\bfZ^2 \cong \bfZ^1 \times_{\sigma} H$, where the cocycle $\sigma$ satisfies a certain functional equation
		known as the \emph{Conze--Lesigne equation}; see Definition \ref{defn: quasi-affine} below.
\end{itemize}

\begin{defn}
	Let $\Gamma$ be a countable discrete abelian group, and let $k \ge 0$.
	An ergodic measure-preserving $\Gamma$-system $\bfX = \left( X, \mX, \mu, (T_g)_{g \in \Gamma} \right)$
	is a \emph{system of order $k$} if $\mZ^k = \mX$.
\end{defn}

Systems of order $k$ have the following properties:

\begin{prop}[cf. \cite{hk}, Section 4.6] \label{prop: order k factors}
	Let $\Gamma$ be a countable discrete abelian group, $k \ge 0$,
	and $\bfX = \left( X, \mX, \mu, (T_g)_{g \in \Gamma} \right)$ an ergodic measure-preserving $\Gamma$-system.
	\begin{enumerate}[(1)]
		\item	The Host--Kra factor $\bfZ^k$ is an order $k$ system.
		\item	If $\bfX$ is an order $k$ system and $\bfY$ is a factor of $\bfX$, then $\bfY$ is again a system of order $k$.
		\item	If $\bfY$ is a system of order $k$ and a factor of $\bfX$, then $\bfY$ is a factor of the Host--Kra factor $\bfZ^k$.
	\end{enumerate}
\end{prop}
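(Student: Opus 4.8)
The plan is to deduce all three items from two structural properties of the Host--Kra seminorms, combined with the defining property of $\mZ^k$ (namely $\seminorm{U^{k+1}}{f} = 0 \iff \E{f}{\mZ^k} = 0$ for $f \in L^\infty(\mu)$). The first property is \emph{restriction-invariance}: if $\bfY$ is a factor of $\bfX$ and $f$ is bounded and $\mY$-measurable, then $\Delta_{g_1, \dots, g_k} f$ is again $\mY$-measurable for all $g_1, \dots, g_k \in \Gamma$ (because $\mY$ is $(T_g)_{g \in \Gamma}$-invariant) and integrals of $\mY$-measurable functions agree whether taken over $\bfX$ or over $\bfY$; hence $\seminorm{U^k}{f}$ has the same value computed in $\bfX$ as in $\bfY$. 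This is essentially unwinding the definition. The second property is \emph{monotonicity under conditioning}: for any $(T_g)_{g \in \Gamma}$-invariant sub-$\sigma$-algebra $\mathcal B$ and any $f \in L^\infty(\mu)$, $\seminorm{U^k}{\E{f}{\mathcal B}} \le \seminorm{U^k}{f}$. This is standard (see \cite{hk}, \cite{btz1}); the mechanism is that $\E{\cdot}{\mathcal B}$, being orthogonal projection onto a $(T_g)_{g \in \Gamma}$-invariant subspace, commutes with every $T_g$, so it can be pushed through the inductive construction of the seminorm.

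Granting these, item (1) is quick. Since $\mZ^k(\bfZ^k)$ is a factor of $\bfZ^k$, the inclusion $\mZ^k(\bfZ^k) \subseteq \mZ^k$ is automatic, so it suffices to show every bounded $\mZ^k$-measurable $f$ is $\mZ^k(\bfZ^k)$-measurable. Put $g = f - \E{f}{\mZ^k(\bfZ^k)}$; this is still $\mZ^k$-measurable and has $\E{g}{\mZ^k(\bfZ^k)} = 0$, so the defining property inside the system $\bfZ^k$ gives $\seminorm{U^{k+1}}{g} = 0$, which by restriction-invariance equals the value of $\seminorm{U^{k+1}}{g}$ in $\bfX$; the defining property inside $\bfX$ then forces $\E{g}{\mZ^k} = 0$, and since $g$ is $\mZ^k$-measurable this means $g = 0$. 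Item (2) runs the same argument in reverse: if $\mZ^k(\bfX) = \mX$ and $\bfY$ is a factor, then for bounded $\mY$-measurable $f$ the function $g = f - \E{f}{\mZ^k(\bfY)}$ has $\seminorm{U^{k+1}}{g} = 0$ in $\bfY$, hence in $\bfX$, hence $\E{g}{\mZ^k(\bfX)} = \E{g}{\mX} = g = 0$, so $\bfY$ is of order $k$.

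For item (3), let $\bfY$ be a factor of $\bfX$ of order $k$, so $\mZ^k(\bfY) = \mY$; I want $\mY \subseteq \mZ^k(\bfX)$. Since $L^2(\mathcal C)^\perp = \{g \in L^2(\mu) : \E{g}{\mathcal C} = 0\}$, this is equivalent to showing that $\E{g}{\mZ^k(\bfX)} = 0$ implies $\E{g}{\mY} = 0$, and by density it is enough to check this for $g \in L^\infty(\mu)$. So suppose $\seminorm{U^{k+1}}{g} = 0$ in $\bfX$. By monotonicity under conditioning applied to the invariant sub-$\sigma$-algebra $\mY$, $\seminorm{U^{k+1}}{\E{g}{\mY}} = 0$ in $\bfX$, hence (restriction-invariance) also in $\bfY$; the defining property of $\mZ^k(\bfY)$ together with $\mZ^k(\bfY) = \mY$ then gives $\E{g}{\mY} = \E{\E{g}{\mY}}{\mZ^k(\bfY)} = 0$, as required.

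The bookkeeping above is routine; the one place where real content enters is the monotonicity of the Host--Kra seminorms under conditional expectation onto an invariant sub-$\sigma$-algebra, which is what makes item (3) — the maximality of $\bfZ^k$ among order-$k$ factors — work. I would either cite this directly from \cite{hk} and \cite{btz1} or, for completeness, record the short argument based on the commutation of $\E{\cdot}{\mathcal B}$ with the $T_g$ and hence with the differencing operators and the uniform Ces\`{a}ro averaging defining $\seminorm{U^k}{\cdot}$.
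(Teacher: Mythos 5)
Your proof is correct, and since the paper itself cites \cite{hk} (and implicitly \cite{btz1} for the general $\Gamma$-action setting) without giving an argument, there is no in-paper proof to compare against: you have simply filled in the standard argument. The reduction of all three items to restriction-invariance and monotonicity of the seminorm is exactly the right skeleton, and items (1) and (2) are handled cleanly.

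The one place I would push back is the parenthetical justification of monotonicity. You write that conditional expectation onto a $(T_g)_{g\in\Gamma}$-invariant sub-$\sigma$-algebra ``can be pushed through the inductive construction of the seminorm'' because it commutes with every $T_g$. This suggestion is too quick: the differencing operator $\Delta_g f = \overline{f}\cdot T_g f$ is quadratic, and while $\E{\cdot}{\mathcal B}$ does commute with $T_g$, it does not satisfy $\E{\Delta_g f}{\mathcal B} = \Delta_g \E{f}{\mathcal B}$, so one cannot literally push the projection through the recursion $\seminorm{U^{k+1}}{f}^{2^{k+1}} = \UClim_h \seminorm{U^k}{\Delta_h f}^{2^k}$. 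The correct mechanism is a duality/Cauchy--Schwarz argument: writing $\seminorm{U^{k+1}}{\E{f}{\mathcal B}}^{2^{k+1}} = \int \E{f}{\mathcal B}\cdot\overline{\mathcal D_{k+1}\E{f}{\mathcal B}}\,d\mu$, one observes that the dual function $\mathcal D_{k+1}\E{f}{\mathcal B}$ is $\mathcal B$-measurable (here the invariance of $\mathcal B$ \emph{is} used, to ensure that all the shifted copies of $\E{f}{\mathcal B}$ stay $\mathcal B$-measurable), so the integral is unchanged if $\E{f}{\mathcal B}$ in the first slot is replaced by $f$; then the Gowers--Cauchy--Schwarz inequality bounds the result by $\seminorm{U^{k+1}}{f}\cdot\seminorm{U^{k+1}}{\E{f}{\mathcal B}}^{2^{k+1}-1}$, and cancelling gives $\seminorm{U^{k+1}}{\E{f}{\mathcal B}} \le \seminorm{U^{k+1}}{f}$. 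Since you are citing the fact rather than proving it, this does not create a gap, but the heuristic you offer as the ``short argument'' for completeness would not compile into a proof as stated, and you should replace it with the dual-function argument or simply leave it as a citation.
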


%%%%%%%%%%%%%%%%%%%%%%%%%%%%%%%%%%%%%%%%%%%%%%%%%%%%%%%%

\subsection{Relatively independent joinings}

The proof of Theorem \ref{thm: main} requires that the Host--Kra factors have certain convenient properties
that they may not have in general systems.
In order to produce these desirable properties, we will work with an extension of the original system.
The key construction to that end is the relatively independent joining of systems with respect to a common factor.
Let $\bfX_1 = \left( X_1, \mX_1, \mu_1, \left( T_{1,g} \right)_{g \in \Gamma} \right)$
and $\bfX_2 = \left( X_2, \mX_2, \mu_2, \left( T_{2,g} \right)_{g \in \Gamma} \right)$
be measure-preserving $\Gamma$-systems.
Suppose $\bfY = \left( Y, \mY, \nu, (S_g)_{g \in \Gamma} \right)$ is another measure-preserving $\Gamma$-system
that arises as a factor of both of the systems $\bfX_1$ and $\bfX_2$,
say with factor maps $\pi_1 : X_1 \to Y$ and $\pi_2 : X_2 \to Y$.
The \emph{relatively indpendent joining} (or \emph{fiber product}) of $\bfX_1$ and $\bfX_2$ with respect to $\bfY$ is the system
\begin{equation*}
	\bfX_1 \times_{\bfY} \bfX_2 = \left( X_1 \times X_2, \mX_1 \otimes \mX_2, \mu_1 \times_{\bfY} \mu_2, \left( T_{1,g} \times T_{2,g} \right)_{g \in \Gamma} \right),
\end{equation*}
where the measure $\mu_1 \times_{\bfY} \mu_2$ is defined by the equation
\begin{equation*}
	\int_{X_1 \times X_2}{(f_1 \otimes f_2)~d(\mu_1 \times_{\bfY} \mu_2)} = \int_Y{\E{f_1}{\mY} \cdot \E{f_2}{\mY}~d\nu}.
\end{equation*}
Note that the measure $\mu_1 \times_{\bfY} \mu_2$ is supported on the set
\begin{equation*}
	X_1 \times_{\bfY} X_2 = \left\{ (x_1, x_2) \in X_1 \times X_2 : \pi_1(x_1) = \pi_2(x_2) \right\}.
\end{equation*}

The relatively independent joining construction allows us to take an extension of a Host--Kra factor of a given system
and turn it into an extension of the full system:

\begin{thm} \label{thm: HK extension}
	Let $\bfX = \left( X, \mX, \mu, (T_g)_{g \in \Gamma} \right)$ be an ergodic $\Gamma$-system.
	Let $k \in \N$.
	Suppose $\bfZ^k$ is the Host--Kra factor of $\bfX$ of order $k$.
	Given any ergodic order $k$ extension $\tilde{\bfZ}^k$ of $\bfZ^k$,
	there exists an ergodic extension $\tilde{\bfX}$ of $\bfX$ such that
	$\tilde{\bfZ}^k$ is the Host--Kra factor of $\tilde{\bfX}$ of order $k$.
\end{thm}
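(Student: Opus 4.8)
The natural construction is to let $\tilde{\bfX}$ be (an ergodic component of) the relatively independent joining $\bfX \times_{\bfZ^k} \tilde{\bfZ}^k$, where $\bfX$ is viewed as an extension of $\bfZ^k$ via its order-$k$ Host--Kra factor map and $\tilde{\bfZ}^k$ is viewed as an extension of $\bfZ^k$ via the given factor map. This system comes with coordinate factor maps onto $\bfX$ and onto $\tilde{\bfZ}^k$ that agree on $\bfZ^k$, so it is simultaneously an extension of $\bfX$ and an extension of $\tilde{\bfZ}^k$, with the two copies of $\bfZ^k$ identified. A relatively independent joining need not be ergodic, so one passes to an ergodic component: since $\mu$ and the Haar measure of $\tilde{\bfZ}^k$ are the pushforwards of the joining measure under the two projections and are both ergodic, almost every ergodic component of the joining still pushes forward to $\mu$ and to the measure of $\tilde{\bfZ}^k$ (an invariant measure that integrates invariant measures and is itself ergodic must equal almost every one of them), and it is supported on the fiber product, so the identification of $\bfZ^k$ survives. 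Let $\tilde{\bfX}$ denote this ergodic system. (Alternatively one can show the joining is already ergodic, using that $\bfZ^k$ is characteristic; this essentially reruns the argument below and must be organized to avoid circularity.)

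One inclusion is soft: $\tilde{\bfZ}^k$ is a factor of $\tilde{\bfX}$ which, by hypothesis, is a system of order $k$, so by Proposition \ref{prop: order k factors}(3) it is a factor of the order-$k$ Host--Kra factor of $\tilde{\bfX}$; i.e.\ the $\sigma$-algebra $\tilde{\mZ}^k$ (pulled back to $\tilde{\bfX}$) is contained in $\mZ^k(\tilde{\bfX})$.

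For the reverse inclusion $\mZ^k(\tilde{\bfX}) \subseteq \tilde{\mZ}^k$ it suffices to show that every $f \in L^\infty(\tilde{\bfX})$ with $\E{f}{\tilde{\mZ}^k} = 0$ satisfies $\seminorm{U^{k+1}}{f}_{\tilde{\bfX}} = 0$. By the defining property of the relatively independent joining, $f$ lies in the $L^2$-closed span of functions $a \otimes b$ with $a \in L^\infty(\bfX)$, $\E{a}{\mZ^k} = 0$, and $b \in L^\infty(\tilde{\bfZ}^k)$ (decompose a general product $a' \otimes b'$ as $\left( \E{a'}{\mZ^k} \otimes b' \right) + \left( (a' - \E{a'}{\mZ^k}) \otimes b' \right)$; the first term is $\tilde{\mZ}^k$-measurable, the second has the required form). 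Since $\{g : \seminorm{U^{k+1}}{g}_{\tilde{\bfX}} = 0\}$ is $L^2$-closed, it is enough to treat $f = a \otimes b$ as above. Now $a \otimes 1$ is the pullback of $a$ under $\tilde{\bfX} \to \bfX$, and Host--Kra seminorms are preserved by pullback, so $\seminorm{U^{k+1}}{a \otimes 1}_{\tilde{\bfX}} = \seminorm{U^{k+1}}{a}_{\bfX} = 0$ because $\bfZ^k$ is the order-$k$ Host--Kra factor of $\bfX$; while $1 \otimes b$ is the pullback of a function on the order-$k$ system $\tilde{\bfZ}^k$, hence is $\mZ^k(\tilde{\bfX})$-measurable by the previous step. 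Writing $a \otimes b = (a \otimes 1)(1 \otimes b)$ and invoking the standard fact that the product of a function with vanishing $U^{k+1}$-seminorm and a bounded function measurable with respect to the order-$k$ factor again has vanishing $U^{k+1}$-seminorm, we get $\seminorm{U^{k+1}}{a \otimes b}_{\tilde{\bfX}} = 0$, which finishes the argument.

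\textbf{Main obstacle.} The delicate point is the bookkeeping around ergodicity: one must ensure that the ergodic component we retain still carries both coordinate factor maps, still identifies the two copies of $\bfZ^k$, and -- crucially for the last step -- still admits the relative-independence description of $\{f : \E{f}{\tilde{\mZ}^k} = 0\}$ used above. If one instead proves the joining is outright ergodic this is automatic, but then the ergodicity proof itself depends on the two inclusions and must be threaded carefully. A secondary point needing explicit verification is the property ``(order $\le k$) $\times$ ($U^{k+1}$-negligible) is $U^{k+1}$-negligible,'' which is classical for $\Z$-actions and extends to actions of countable discrete abelian groups but should be recorded.
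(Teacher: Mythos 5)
Your construction is the same as the paper's: take $\tilde{\bfX} = \bfX \times_{\bfZ^k} \tilde{\bfZ}^k$. The soft inclusion $\tilde{\mZ}^k \subseteq \mZ^k(\tilde{\bfX})$ is argued identically. Your reverse inclusion, however, is a genuinely different route. The paper expands a $\mZ^k(\tilde{\bfX})$-measurable $f$ against an orthonormal basis $(u_i)$ of $L^2(\tilde{\bfZ}^k)$ and shows each coefficient $\E{(\ind_X \otimes \overline{u_i})f}{\mX}$ is $\mZ^k$-measurable using parts (2)--(3) of Proposition~\ref{prop: order k factors}, then glues along the $\bfZ^k$-identification. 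You instead prove the dual statement $\E{f}{\tilde{\mZ}^k} = 0 \Rightarrow \seminorm{U^{k+1}}{f} = 0$, via the tensor description of the orthogonal complement of $\tilde{\mZ}^k$ in the relatively independent joining, pullback-invariance of Host--Kra seminorms, and the product rule that multiplication by a function measurable with respect to an order-$\le k$ factor preserves $U^{k+1}$-negligibility. Both arguments are correct in substance; yours has the mild merit of not depending on the basis being \emph{fiberwise} orthonormal over $\bfZ^k$, a reading that the paper's computation $\E{(\ind_X \otimes \overline{u_i})f}{\mX} = a_i$ implicitly requires.

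The one genuine gap is the one you flag: ergodicity of the joining. Your primary workaround, passing to an ergodic component, is the wrong fix. Almost every ergodic component is indeed still a joining of $\bfX$ and $\tilde{\bfZ}^k$ identifying the two copies of $\bfZ^k$, but it is generally \emph{not} the relatively independent joining, so the tensor description of $\{f : \E{f}{\tilde{\mZ}^k} = 0\}$ on which your argument (and the paper's) rests is unavailable on the component. The correct resolution is the alternative you mention: show the joining is ergodic outright. This follows formally from the reverse inclusion itself: the invariant $\sigma$-algebra satisfies $\mI(\tilde{\bfX}) = \mZ^0(\tilde{\bfX}) \subseteq \mZ^k(\tilde{\bfX}) \subseteq \tilde{\mZ}^k$, which is trivial because $\tilde{\bfZ}^k$ is ergodic. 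To avoid circularity one must establish the reverse inclusion for the \emph{a priori} non-ergodic joining --- in particular one needs the seminorm pullback identity and the product rule in that generality, and one cannot lean on the soft inclusion via Proposition~\ref{prop: order k factors}(3), which is stated only for ergodic systems. This is a real but manageable technical point; note that the paper's own proof also silently assumes the joining is ergodic at exactly that step, so you have in fact identified a small imprecision shared by the source.
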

\begin{proof}
	Define $\tilde{\bfX}$ as the relatively independent joining $\tilde{\bfX} = \bfX \times_{\bfZ^k} \tilde{\bfZ}^k$.
	First, $\tilde{\bfZ}^k$ is an order $k$ factor of $\tilde{\bfX}$, so it is a factor of the Host--Kra factor of order $k$
	by Proposition \ref{prop: order k factors}(3).
	
	Conversely, we want to show that the Host--Kra factor of $\tilde{\bfX}$ of order $k$ is a factor of $\tilde{\bfZ}^k$.
	Let $(u_i)_{i \in \N}$ be an orthonormal basis in $L^2(\tilde{Z}^k)$.
	Suppose $f \in L^2(\tilde{X})$ is measurable with respect to the Host--Kra factor of order $k$.
	Expand $f$ in the basis $(u_i)_{i \in \N}$:
	\begin{equation*}
		f(x,y) = \sum_{i \in \N}{a_i(x) u_i(y)}.
	\end{equation*}
	Fix $i \in \N$.
	Since $\ind_X \otimes u_i$ is $\tilde{\mZ}^k$-measurable and hence measurable
	with respect to the Host--Kra factor of order $k$, the product $(\ind_X \otimes \overline{u_i}) f$
	remains measurable with respect to the Host--Kra factor of order $k$.
	Therefore, $\E{(\ind_X \otimes \overline{u_i})f}{\mX}$ is measurable with respect to $\mZ^k$
	by items (2) and (3) in Proposition \ref{prop: order k factors}.
	By direct computation, since $(u_j)_{j \in \N}$ is an orthonormal basis in $L^2(\tilde{Z}^k)$, we have
	\begin{equation*}
		\E{(\ind_X \otimes \overline{u_i})f}{\mX} = \sum_{j \in \N}{\E{a_j \otimes \overline{u_i}u_j}{\mX}} = a_i.
	\end{equation*}
	Thus, $a_i$ is $\mZ^k$-measurable for each $i \in \N$.
	That is, $a_i(x) = b_i(\pi_1(x))$ for some function $b_i : Z^k \to \C$, where $\pi_1 : X \to Z^k$ is the factor map.
	But, letting $\pi_2 : \tilde{Z}^k \to Z^k$ be the other factor map, one has $\pi_1(x) = \pi_2(y)$ for a.e.~$(x,y) \in \tilde{X}$.
	Hence,
	\begin{equation*}
		f(x,y) = \sum_{i \in \N}{a_i(x) u_i(y)} = \sum_{i \in \N}{b_i(\pi_1(x)) u_i(y)} = \sum_{i \in \N}{b_i(\pi_2(y)) u_i(y)}
	\end{equation*}
	is $\tilde{\mZ}^k$-measurable.
\end{proof}

%%%%%%%%%%%%%%%%%%%%%%%%%%%%%%%%%%%%%%%%%%%%%%%%%%%%%%%%

\subsection{Hilbert space-valued functions and unique ergodicity}

Let $\Hil$ be a Hilbert space.
Given a compact metric space $X$, a probability measure $\mu$ on $X$, and a continuous function $F : X \to \Hil$,
one can define the integral $\int_X{F~d\mu}$ to be the element of $\Hil$ satisfying
\begin{equation*}
	\innprod{\int_X{F~d\mu}}{v} = \int_X{\innprod{F(x)}{v}~d\mu(x)}
\end{equation*}
for every $v \in \Hil$.

A characterizing property of uniquely ergodic systems is the following:
a topological system $\left( X, (T_g)_{g \in \Gamma} \right)$ is uniquely ergodic (with unique invariant measure $\mu$)
if and only if for any continuous function $F : X \to \C$ and any $x_0 \in X$,
\begin{equation*}
	\UClim_{g \in \Gamma}{F(T_gx_0)} = \int_X{F~d\mu}.
\end{equation*}

The following lemma shows that the same result holds for Hilbert space-valued functions:

\begin{lem}[cf. \cite{hk-CL}, Lemma 4.3] \label{lem: Hil-valued ET}
	Let $\left( X, (T_g)_{g \in \Gamma} \right)$ be a topological $\Gamma$-system.
	Let $\Hil$ be a nontrivial Hilbert space.
	Then $\left( X, (T_g)_{g \in \Gamma} \right)$ is uniquely ergodic (with unique invariant measure $\mu$) if and only if for any continuous function $F : X \to \Hil$ and any $x_0 \in X$,
	\begin{equation*}
		\UClim_{g \in \Gamma}{F(T_gx_0)} = \int_X{F~d\mu}
	\end{equation*}
	in $\Hil$.
\end{lem}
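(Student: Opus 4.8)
The plan is to reduce the Hilbert space-valued statement to the known scalar-valued characterization of unique ergodicity by testing against vectors $v \in \Hil$. The backward direction is immediate: if the displayed limit holds for all continuous $F : X \to \Hil$, then in particular it holds for functions of the form $F(x) = f(x) v$ with $f : X \to \C$ continuous and $v \in \Hil \setminus \{0\}$ fixed; pairing against $v$ and dividing by $\norm{}{v}^2$ recovers the scalar characterization, so $\left(X, (T_g)_{g \in \Gamma}\right)$ is uniquely ergodic with unique invariant measure $\mu$.

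For the forward direction, assume the system is uniquely ergodic with unique invariant measure $\mu$, fix a continuous $F : X \to \Hil$ and a point $x_0 \in X$, and set $v := \int_X F \, d\mu \in \Hil$ (which is well-defined by the preceding discussion, since $F$ is continuous on a compact metric space, hence bounded). First I would establish the scalar fact that for every $w \in \Hil$, the function $x \mapsto \innprod{F(x)}{w}$ is continuous, so by the scalar characterization of unique ergodicity,
\begin{equation*}
	\UClim_{g \in \Gamma}{\innprod{F(T_g x_0)}{w}} = \int_X{\innprod{F(x)}{w} \, d\mu(x)} = \innprod{v}{w}.
\end{equation*}
In other words, the $\Hil$-valued averages $A_{\Phi_N} := \frac{1}{|\Phi_N|} \sum_{g \in \Phi_N} F(T_g x_0)$ converge \emph{weakly} to $v$ along every F{\o}lner sequence $\Phi = (\Phi_N)_{N \in \N}$. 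To upgrade weak convergence to norm convergence, I would use that $F$ takes values in a norm-compact subset of $\Hil$: since $X$ is compact metric and $F$ is continuous, $F(X)$ is compact, hence so is its closed convex hull $K$ (by Mazur's theorem, the closed convex hull of a compact set in a Banach space is compact). Every average $A_{\Phi_N}$ lies in $K$, and $v \in K$. On the compact metrizable set $K$, the weak topology and the norm topology coincide; therefore weak convergence $A_{\Phi_N} \to v$ implies $\norm{}{A_{\Phi_N} - v} \to 0$. Since the F{\o}lner sequence was arbitrary, this gives $\UClim_{g \in \Gamma}{F(T_g x_0)} = v = \int_X F \, d\mu$ in $\Hil$, as desired.

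The main obstacle is precisely the passage from weak to norm convergence of the vector-valued averages; the pairing argument alone only yields the weak statement, and one genuinely needs a compactness input. The cleanest route is the compactness of $F(X)$ together with Mazur's theorem as above; alternatively, one can argue directly via an $\eps$-net: cover $F(X)$ by finitely many balls of radius $\eps$ centered at points $F(x_1), \dots, F(x_m)$, use the resulting finite-dimensional approximation to reduce to weak (= norm) convergence in a finite-dimensional subspace, and control the error uniformly in $N$. Either way, once norm convergence along an arbitrary F{\o}lner sequence is in hand, the conclusion follows by the definition of the uniform Ces\`{a}ro limit. The rest of the argument is routine.
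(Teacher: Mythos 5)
Your proof is correct, but it takes a genuinely different route from the paper's. The paper proves the forward direction by a van der Corput argument: it reduces to $\int_X F\,d\mu = 0$, sets $u_g = F(T_g x_0)$, computes $\xi_h = \UClim_g \innprod{u_{g+h}}{u_g} = \int_X \innprod{F(T_h x)}{F(x)}\,d\mu(x)$ via scalar unique ergodicity, then shows $\UClim_h \xi_h = 0$ by applying scalar unique ergodicity again to $y \mapsto \innprod{F(y)}{F(x)}$ for each fixed $x$ and invoking dominated convergence, and finally concludes by Lemma~\ref{lem: vdC}. Your approach instead establishes weak convergence of the averages $A_{\Phi_N}$ to $v = \int_X F\,d\mu$ by pairing against arbitrary $w \in \Hil$, and then upgrades to norm convergence by observing that all averages lie in the norm-compact set $K = \overline{\mathrm{conv}}(F(X))$ (norm-compactness of $K$ coming from Mazur's theorem, and $v \in K$ since the Pettis integral of a continuous function on a compact space coincides with the Bochner integral, which lands in the closed convex hull of the range), on which the weak and norm topologies coincide. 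Both proofs are sound. The paper's argument stays within the toolbox it has already built (the van der Corput lemma is a central tool throughout), avoids any appeal to Mazur's theorem or Bochner/Pettis integration, and only uses the Hilbert space inner product rather than compactness of $F(X)$; this makes it more self-contained and more in the spirit of the surrounding ergodic-theoretic material. Your argument is shorter modulo citing the two functional-analytic facts, generalizes readily to Banach-space-valued $F$ (where the paper's inner-product computation would not make sense as written), and isolates cleanly what is really needed, namely the passage from weak to strong convergence of vector-valued Ces\`{a}ro averages; the price is a dependence on the compactness of $F(X)$, whereas the paper's proof only uses boundedness of $F$.
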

\begin{proof}
	Suppose that for any continuous function $F : X \to \Hil$ and any $x_0 \in X$,
	\begin{equation*}
		\UClim_{g \in \Gamma}{F(T_gx_0)} = \int_X{F~d\mu}
	\end{equation*}
	in $\Hil$.
	Since $\Hil$ is nontrivial, it contains a copy of $\C$, so this implies if $F : X \to \C$ is continuous and $x_0 \in X$, then
	\begin{equation*}
		\UClim_{g \in \Gamma}{F(T_gx_0)} = \int_X{F~d\mu}
	\end{equation*}
	in $\C$.
	Therefore, $(X, (T_g)_{g \in \Gamma})$ is uniquely ergodic.
	
	Now suppose $\left( X, (T_g)_{g \in \Gamma} \right)$ is uniquely ergodic with unique invariant measure $\mu$.
	Replacing $F$ by $F - \int_X{F~d\mu}$, we may assume without loss of generality that $\int_X{F~d\mu} = 0$ in $\Hil$.
	Put $u_g = F(T_gx_0) \in \Hil$.
	For any $h \in \Gamma$, the function $\varphi_h : X \to \C$
	defined by $\varphi_h(x) = \innprod{F(T_hx)}{F(x)}$ is continuous.
	Hence, by unique ergodicity of $(T_g)_{g \in \Gamma}$,
	\begin{equation*}
		\xi_h := \UClim_{g \in \Gamma}{\innprod{u_{g+h}}{u_g}} = \UClim_{g \in \Gamma}{\varphi_h(T_gx_0)} = \int_X{\varphi_h~d\mu}.
	\end{equation*}
	For $x \in X$, consider the function $\psi_x(y) = \innprod{F(y)}{F(x)}$.
	This is a continuous function from $X$ to $\C$, so by unique ergodicity of $(T_h)_{h \in \Gamma}$, we have
	\begin{equation*}
		\UClim_{h \in \Gamma}{\psi_x(T_hy)} = \int_X{\psi_x~d\mu} = \innprod{\int_X{F~d\mu}}{F(x)} = 0
	\end{equation*}
	for every $y \in X$.
	In particular, we may take $y = x$, in which case
	\begin{equation*}
		\UClim_{h \in \Gamma}{\varphi_h(x)} = \UClim_{h \in \Gamma}{\innprod{F(T_hx)}{F(x)}} = \UClim_{h \in \Gamma}{\psi_x(T_hx)} = 0.
	\end{equation*}
	Integrating over $x \in X$ and applying the dominated convergence theorem, it follows that
	\begin{equation*}
		\UClim_{h \in \Gamma}{\xi_h} = 0.
	\end{equation*}
	Thus, by Lemma \ref{lem: vdC}, $\UClim_{g \in \Gamma}{F(T_gx_0)} = 0$ in $\Hil$.
\end{proof}

%%%%%%%%%%%%%%%%%%%%%%%%%%%%%%%%%%%%%%%%%%%%%%%%%%%%%%%%

\subsection{Cocycles and coboundaries}

An important construction in ergodic theory is that of group extensions.
For our purposes, we will need only to consider extensions by abelian groups, which creates various simplifications.
Suppose $\bfX = \left( X, \mX, \mu, (T_g)_{g \in \Gamma} \right)$ is an ergodic $\Gamma$-system.
Given a compact abelian group $(H,+)$ and a measurable function $\sigma : \Gamma \times X \to H$,
we can define the \emph{group extension of $\bfX$ by $H$ over $\sigma$} as the system
\begin{equation*}
	\bfX \times_{\sigma} H := \left( X \times H, \mX \otimes \B_H, \mu \times m_H, \left( T^{\sigma}_g \right)_{g \in \Gamma} \right),
\end{equation*}
where $T^{\sigma}_g(x,y) := (T_gx, y + \sigma_g(x))$.
In order to obtain a $\Gamma$-action in this manner, the function $\sigma$ must satisfy the \emph{cocycle} equation
\begin{equation*}
	\sigma_{g+h}(x) = \sigma_g(T_hx) + \sigma_h(x)
\end{equation*}
for every $g, h \in \Gamma$ and $\mu$-a.e. $x \in X$.

Given any measurable function $F : X \to H$, one may construct a cocycle
\begin{equation*}
	\Delta_gF(x) := F(T_gx) - F(x).
\end{equation*}
Such a cocycle is called a \emph{coboundary}.

Two cocycles $\sigma$ and $\tau$ are \emph{cohomologous}, denoted $\sigma \sim \tau$,
if their difference $\tau - \sigma$ is a coboundary.
The following result is an easy exercise:

\begin{prop}[cf. \cite{glasner}, Lemma 3.20]
	Let $\bfX = \left( X, \mX, \mu, (T_g)_{g \in \Gamma} \right)$ be a $\Gamma$-system.
	Suppose $H$ is a compact abelian group and $\sigma, \tau : \Gamma \times X \to H$
	are cocycles such that $\sigma \sim \tau$.
	Then $\bfX \times_{\sigma} H \cong \bfX \times_{\tau} H$.
\end{prop}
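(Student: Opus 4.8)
The plan is to write down an explicit isomorphism and check it has the required properties. Since $\sigma \sim \tau$, the definition of cohomologous cocycles gives a measurable function $F : X \to H$ with $\tau_g(x) - \sigma_g(x) = \Delta_g F(x) = F(T_g x) - F(x)$ for every $g \in \Gamma$ and $\mu$-a.e.\ $x \in X$. I would then consider the map $\Phi : X \times H \to X \times H$ defined by $\Phi(x,y) = (x, y + F(x))$ and claim that it is an isomorphism of measure-preserving systems from $\bfX \times_{\sigma} H$ to $\bfX \times_{\tau} H$.

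There are three things to verify. First, $\Phi$ is a measurable bijection, since its inverse is the manifestly measurable map $\Phi^{-1}(x,y) = (x, y - F(x))$. Second, $\Phi$ preserves the measure $\mu \times m_H$: it commutes with the projection onto $X$, and on each fiber $\{x\} \times H$ it acts by the translation $y \mapsto y + F(x)$, which preserves the Haar measure $m_H$; the claim then follows from Fubini's theorem. Third, and most importantly, $\Phi$ intertwines the two actions. Here one computes that $\Phi\bigl(T^{\sigma}_g(x,y)\bigr) = \bigl(T_g x,\, y + \sigma_g(x) + F(T_g x)\bigr)$ while $T^{\tau}_g\bigl(\Phi(x,y)\bigr) = \bigl(T_g x,\, y + F(x) + \tau_g(x)\bigr)$, and these agree exactly when $\tau_g(x) - \sigma_g(x) = F(T_g x) - F(x)$, which is precisely the cohomology relation. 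Thus for each fixed $g$ the identity $\Phi \circ T^{\sigma}_g = T^{\tau}_g \circ \Phi$ holds on a co-null set.

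I do not expect a genuine obstacle: this is the standard ``straightening'' argument by which a coboundary modification of a cocycle is absorbed into a fiberwise translation. The only point that warrants a little care is the bookkeeping of almost-everywhere statements, since the cocycle and cohomology equations are assumed only $\mu$-a.e.\ for each individual group element $g$. Because $\Gamma$ is countable, one intersects the relevant co-null sets over all $g \in \Gamma$ to obtain a single co-null set on which all the identities hold simultaneously, which suffices to conclude that $\Phi$ is an isomorphism of the two measure-preserving $\Gamma$-systems.
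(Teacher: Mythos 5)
Your proof is correct and is the standard ``straightening'' argument; the paper gives no proof at all, simply noting the result is an easy exercise and citing Glasner, so your explicit verification of the conjugating map $\Phi(x,y)=(x,y+F(x))$ supplies exactly what is implicitly being referenced.
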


When analyzing a cocycle $\sigma : \Gamma \times X \to H$ taking values in a compact abelian group $H$,
it is often useful to consider the family of cocycles $\chi \circ \sigma : \Gamma \times X \to S^1$
given by composition with characters $\chi \in \hat{H}$.

The following lemma gives a criterion for checking that a cocycle taking values in $S^1$ is a coboundary,
when the base system is an action by rotations on a compact abelian group.

\begin{lem}[\cite{abb}, Proposition 7.12] \label{lem: coboundary}
	Let $\bfZ = (Z,\alpha)$ be a Kronecker system and $\sigma : \Gamma \times Z \to S^1$ a cocycle.
	The following are equivalent:
	\begin{enumerate}[(i)]
		\item	$\sigma$ is a coboundary;
		\item	for any sequence $(g_n)_{n \in \N}$ in $\Gamma$ such that $\alpha_{g_n} \to 0$ in $Z$,
			one has $\sigma_{g_n} \to 1$ in $L^2(Z)$.
	\end{enumerate}
\end{lem}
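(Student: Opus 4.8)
The plan is as follows. The implication (i) $\Rightarrow$ (ii) is routine: if $\sigma$ is a coboundary, say $\sigma_g(z) = F(T_g z)\overline{F(z)}$ for a measurable $F : Z \to S^1$, then since $|F| \equiv 1$ one has $\left| \sigma_g(z) - 1 \right| = \left| F(z + \alpha_g) - F(z) \right|$ pointwise, so $\norm{L^2(Z)}{\sigma_g - 1} = \norm{L^2(Z)}{F(\cdot + \alpha_g) - F}$, which tends to $0$ whenever $\alpha_g \to 0$ by strong continuity of the translation action of $Z$ on $L^2(Z)$. The substance of the lemma is therefore the converse, and for this the idea is to promote $\sigma$ to a cocycle over the rotation action of the \emph{whole} compact group $Z$ on itself, and then to show that every $S^1$-valued cocycle over that action is a coboundary.

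To carry out the promotion, I would first note that applying (ii) to constant sequences forces $\sigma_g = 1$ in $L^2(Z)$ whenever $\alpha_g = 0$; in view of the cocycle identity this shows that $g \mapsto \sigma_g \in L^2(Z)$ depends only on $\alpha_g$. Writing $\sigma_g = \sigma_{g-h}(\cdot + \alpha_h)\, \sigma_h$ and using translation-invariance of Haar measure gives $\norm{L^2(Z)}{\sigma_g - \sigma_h} = \norm{L^2(Z)}{\sigma_{g-h} - 1}$, so condition (ii) is exactly the assertion that $\alpha_g \mapsto \sigma_g$ is uniformly continuous from $\alpha(\Gamma) \subseteq Z$ into $L^2(Z)$. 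Since $\alpha(\Gamma)$ is dense in the compact metrizable group $Z$ and $L^2(Z)$ is complete, this extends uniquely to a continuous map $z \mapsto \sigma_z$ from $Z$ into $\{ f \in L^2(Z) : |f| = 1 \text{ a.e.} \}$. The image $\{\sigma_z : z \in Z\}$ is norm-compact, so translation is equicontinuous on it; hence both $(z_1, z_2) \mapsto \sigma_{z_1 + z_2}$ and $(z_1, z_2) \mapsto \sigma_{z_1}(\cdot + z_2)\, \sigma_{z_2}$ are continuous maps $Z^2 \to L^2(Z)$, and since they agree on the dense set $\alpha(\Gamma)^2$ they agree everywhere:
\begin{equation*}
	\sigma_{z_1 + z_2} = \sigma_{z_1}(\cdot + z_2)\, \sigma_{z_2} \quad \text{in } L^2(Z), \text{ for all } z_1, z_2 \in Z.
\end{equation*}
Passing to a jointly measurable representative $\sigma : Z \times Z \to S^1$, this identity holds for a.e.\ $(z_1, z_2, z) \in Z^3$.

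The final step is Fourier analysis in the base variable. For a.e.\ $z$, expand $\sigma(z_1, z) = \sum_{\chi \in \hat{Z}} c_\chi(z)\, \chi(z_1)$ in $L^2(Z)$, with $\sum_{\chi} |c_\chi(z)|^2 = 1$. Comparing $\chi$-Fourier coefficients in $z_1$ on the two sides of the cocycle identity gives
\begin{equation*}
	c_\chi(z)\, \chi(z_2) = \sigma(z_2, z)\, c_\chi(z + z_2) \qquad \text{for each } \chi \in \hat{Z} \text{ and a.e.\ } (z_2, z).
\end{equation*}
Taking absolute values shows $|c_\chi(z)| = |c_\chi(z + z_2)|$ for a.e.\ $(z_2, z)$, whence each $|c_\chi|$ is a.e.\ constant (equality case of Cauchy--Schwarz, or ergodicity of the rotation of $Z$ on itself). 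Since $\sum_{\chi} |c_\chi|^2 = 1$, some $\chi_0$ satisfies $|c_{\chi_0}| \equiv r_0$ with $r_0 > 0$; set $F := c_{\chi_0}/r_0$, a measurable $S^1$-valued function. The displayed relation for $\chi_0$ becomes $\sigma(z_2, z) = F(z)\, \overline{F(z + z_2)}\, \chi_0(z_2)$, and substituting $\chi_0(z_2) = \chi_0(z + z_2)\overline{\chi_0(z)}$ rewrites this as $\sigma(z_2, z) = H(z + z_2)\, \overline{H(z)}$ with $H := \overline{F}\, \chi_0 : Z \to S^1$. Both sides are continuous in $z_2$ into $L^2(Z)$ and coincide for a.e.\ $z_2$, hence for all $z_2 \in Z$; specializing to $z_2 = \alpha_g$ gives $\sigma_g(z) = H(T_g z)\, \overline{H(z)}$, exhibiting $\sigma$ as the coboundary of $H$.

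The main obstacle I anticipate is the bookkeeping in the promotion step: producing a jointly measurable representative of the continuous $L^2$-valued map $z \mapsto \sigma_z$, and, at several points, converting between an identity valid for a.e.\ $z_2 \in Z$ and the same identity at the particular points $z_2 = \alpha_g$ (which form a null set). Each such conversion is handled by the density of full-measure subsets of $Z$ together with the relevant continuity. Once $\sigma$ has been transported to the full rotation on $Z$, the Fourier computation is essentially mechanical.
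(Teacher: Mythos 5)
Your (i) $\Rightarrow$ (ii) is fine, and the strategy for (ii) $\Rightarrow$ (i) --- deducing uniform continuity of $\alpha_g \mapsto \sigma_g$ in $L^2(Z)$, extending to a cocycle over the rotation action of the ambient compact group, and then running a Fourier-coefficient argument in the base variable --- correctly handles the ergodic case. However, the lemma as stated does not assume ergodicity: the remark immediately following it says so explicitly, and this matters, because Lemma \ref{lem: coboundary} is invoked in the proof of Theorem \ref{thm: Mackey gluing} for the Kronecker system $(W, \tilde{\alpha})$, which the paper points out need not be ergodic. Your proof uses ergodicity precisely where you assert that $\alpha(\Gamma)$ is dense in $Z$; that density \emph{is} the ergodicity hypothesis.

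When $Z_0 := \overline{\alpha(\Gamma)}$ is a proper closed subgroup of $Z$, the continuous-extension step produces only a cocycle for the rotation of $Z_0$ on $Z$, and the Fourier expansion must then be taken over $\hat{Z_0}$. The crucial failure is the inference that each $|c_\chi|$ is a.e.\ constant: translation by $Z_0$ on $Z$ is not ergodic, so the functional equation only shows that $|c_\chi|$ is essentially constant along each $Z_0$-coset, and there need not be any single $\chi_0 \in \hat{Z_0}$ whose coefficient is bounded away from zero globally. The argument is repairable --- since $\hat{Z_0}$ is countable, one can partition $Z$ (modulo null sets) into $Z_0$-invariant measurable pieces on each of which some fixed $\chi_i$ has nonvanishing coefficient, extend each $\chi_i$ to a character $\tilde{\chi}_i \in \hat{Z}$, and glue the corresponding transfer functions across the pieces --- but this patching is a genuine additional step, and your proof as written does not perform it.
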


\begin{rem}
	We do not assume that the Kronecker system $(Z, \alpha)$ appearing in Lemma \ref{lem: coboundary} is ergodic.
	This will be important for some later applications, e.g., Theorem \ref{thm: Mackey gluing}.
\end{rem}

\begin{cor} \label{cor: coboundary continuous extension}
	Let $\bfZ = (Z,\alpha)$ be a Kronecker system, and suppose $\sigma : \Gamma \times Z \to S^1$ is a coboundary.
	Then there is a function $\omega : Z \times Z \to S^1$ such that
	$t \mapsto \omega(t, \cdot)$ is a continuous map from $Z$ to $L^2(Z)$ and
	\begin{equation*}
		\omega(\alpha_g, z) = \sigma_g(z).
	\end{equation*}
	If $\bfZ$ is ergodic, then $\omega$ is defined uniquely almost everywhere.
\end{cor}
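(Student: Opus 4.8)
The plan is to leverage Lemma \ref{lem: coboundary} directly. Since $\sigma$ is a coboundary, condition (ii) of that lemma tells us that whenever $\alpha_{g_n} \to 0$ in $Z$, we have $\sigma_{g_n} \to 1$ in $L^2(Z)$. The first step is to observe that this says precisely that the map $\alpha_g \mapsto \sigma_g$ is well-defined and uniformly continuous on the subgroup $\alpha(\Gamma) \subseteq Z$, when $\alpha(\Gamma)$ carries the subspace topology from $Z$ and the target carries the $L^2(Z)$-topology. Indeed, if $\alpha_g = \alpha_{g'}$ then $\alpha_{g - g'} = 0$, and applying condition (ii) to the constant sequence $g_n = g - g'$ gives $\sigma_{g-g'} = 1$ in $L^2(Z)$; combined with the cocycle identity $\sigma_{g} = \sigma_{g'} \cdot (\sigma_{g - g'} \circ T_{g'}) = \sigma_{g'}$, so the map is well-defined. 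For uniform continuity, given $\alpha_{g_n} - \alpha_{h_n} = \alpha_{g_n - h_n} \to 0$, condition (ii) gives $\sigma_{g_n - h_n} \to 1$, and the cocycle identity together with the fact that the shifts $T_{h_n}$ act isometrically on $L^2(Z)$ yields $\norm{2}{\sigma_{g_n} - \sigma_{h_n}} = \norm{2}{\sigma_{g_n - h_n} \circ T_{h_n} \cdot \sigma_{h_n} - \sigma_{h_n}} = \norm{2}{\sigma_{g_n - h_n} - 1} \to 0$.

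The second step is to extend this uniformly continuous map from the dense subgroup $\alpha(\Gamma)$ to all of $Z$. Since $Z$ is a compact metric space, hence complete, and $L^2(Z)$ is a complete metric space, a uniformly continuous map from a dense subset extends uniquely to a continuous map $Z \to L^2(Z)$; call this extension $t \mapsto \omega(t, \cdot)$. By construction $\omega(\alpha_g, \cdot) = \sigma_g$ as elements of $L^2(Z)$, i.e.\ $\omega(\alpha_g, z) = \sigma_g(z)$ for a.e.\ $z$. To produce an honest function $\omega : Z \times Z \to S^1$ (rather than an $L^2(Z)$-valued map), one selects, for each $t$, a representative of the class $\omega(t, \cdot)$; a routine measurable selection argument (or appeal to the fact that a continuous map into $L^2$ of a compact metric space can be realized by a jointly measurable function, after modification on a null set) yields $\omega$ with the stated property, and the $S^1$-valued normalization is automatic since each $\sigma_g$ is $S^1$-valued and $L^2$-limits of $S^1$-valued functions along a subsequence converge a.e.

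For the final uniqueness claim, suppose $\bfZ$ is ergodic and $\omega'$ is another such function. Then $t \mapsto \omega(t,\cdot) - \omega'(t,\cdot)$ (difference in $L^2(Z)$) is continuous and vanishes on the dense set $\alpha(\Gamma)$, hence vanishes identically, so $\omega(t,\cdot) = \omega'(t,\cdot)$ in $L^2(Z)$ for every $t$; combined with the fact that (by ergodicity) the Haar measure on $Z$ has full support, this forces $\omega = \omega'$ almost everywhere on $Z \times Z$. I expect the main obstacle to be the bookkeeping in the second step — passing cleanly from the $L^2(Z)$-valued continuous extension to a genuine function $\omega$ on $Z \times Z$ with the pointwise identity $\omega(\alpha_g, z) = \sigma_g(z)$ — though this is standard and the paper likely only needs the $L^2$-continuity of $t \mapsto \omega(t, \cdot)$ in its applications.
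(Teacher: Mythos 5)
Your approach is genuinely different from the paper's, and it contains a gap that is fatal at the level of generality the Corollary requires. You extend the uniformly continuous map $\alpha_g \mapsto \sigma_g$ from $\alpha(\Gamma)$ to $Z$ by density. But the Corollary does not assume $\bfZ$ is ergodic, and the remark immediately preceding Lemma~\ref{lem: coboundary} is explicit that this is deliberate: the Corollary needs to apply to non-ergodic Kronecker systems such as $(W, \tilde\alpha)$ in Theorem~\ref{thm: Mackey gluing}. When $\bfZ$ is not ergodic, $\alpha(\Gamma)$ is typically a proper, non-dense subgroup of $Z$, so your density extension only defines $\omega$ on $\overline{\alpha(\Gamma)} \times Z$, not on $Z \times Z$; there is no canonical continuous way to extend further, since a closed subgroup of a compact abelian group is in general not a retract. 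The uniqueness caveat ``if $\bfZ$ is ergodic'' at the end of the statement is itself a signal that in the non-ergodic case the existence requires a construction that does not rely on density and that the answer genuinely fails to be unique.

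The paper's own proof sidesteps all of this in one line. Since $\sigma$ is a coboundary, there is a measurable $F : Z \to S^1$ with $\sigma_g(z) = F(z + \alpha_g)/F(z)$, and one simply sets $\omega(t,z) = F(z+t)/F(z)$. This is defined on all of $Z \times Z$ with no ergodicity assumption, is $S^1$-valued by inspection (so no measurable selection is needed to pass from an $L^2$-valued continuous map to a genuine function), satisfies $\omega(\alpha_g, z) = \sigma_g(z)$ exactly, and $t \mapsto \omega(t,\cdot)$ is continuous into $L^2(Z)$ because translation on $L^2(Z)$ is strongly continuous. In the ergodic case, $F$ is unique up to a multiplicative constant, which cancels in $\omega$, giving the a.e.\ uniqueness. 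Your well-definedness and uniform continuity computations via the cocycle identity are correct as far as they go, and your uniqueness argument in the ergodic case is fine, but the extension step in the middle needs density that you do not have in general; the coboundary datum $F$ is precisely the global object that lets you write down $\omega$ without invoking density.
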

\begin{proof}
	This follows from the proof of \cite[Proposition 7.12]{abb}.
	We give a different proof here.
	Since $\sigma$ is a coboundary, we may write $\sigma = \Delta F$ for some $F : Z \to S^1$.
	That is,
	\begin{equation*}
		\sigma_g(z) = \frac{F(z+\alpha_g)}{F(z)}.
	\end{equation*}
	We may then take $\omega(t,z) = \frac{F(z+t)}{F(z)}$.
\end{proof}

Two additional families of cocycles will play an important role in this paper.

\begin{defn} \label{defn: quasi-affine}
	Let $\Gamma$ be a countable discrete abelian group, and let $\bfZ = (Z, \alpha)$ be an ergodic Kronecker system.
	A cocycle $\sigma : \Gamma \times Z \to S^1$ is
	\begin{enumerate}[(1)]
		\item	\emph{cohomologous to a character} if there exists $\gamma \in \hat{\Gamma}$
			such that $\sigma_g(z) \sim \gamma(g)$.
			That is, there exists a measurable function $F : Z \to S^1$ such that
			\begin{equation*}
				\sigma_g(z) = \gamma(g) \frac{F(z + \alpha_g)}{F(z)}
			\end{equation*}
			for every $g \in \Gamma$ and a.e. $z \in Z$.
		\item	\emph{quasi-affine} (or a \emph{Conze--Lesigne} cocycle) if for every $t \in Z$, the cocycle
			\begin{equation*}
				\frac{\sigma_g(z+t)}{\sigma_g(z)}
			\end{equation*}
			is cohomologous to a character.
	\end{enumerate}
\end{defn}

\begin{rem}
	According to the definition above, a cocycle $\sigma : \Gamma \times Z \to S^1$ is quasi-affine if and only if for each $t \in Z$, there exists a measurable function $F_t : Z \to S^1$ and a character $\gamma_t \in \hat{\Gamma}$ such that
	\begin{equation*}
		\frac{\sigma_g(z+t)}{\sigma_g(z)} = \gamma_t(g) \frac{F_t(z + \alpha_g)}{F_t(z)}
	\end{equation*}
	for every $g \in \Gamma$ and a.e. $z \in Z$.
	One may additionally ensure that the maps $(t, z) \mapsto F_t(z)$ from $Z \times Z$ to $S^1$ and $t \mapsto \gamma_t$ from $Z$ to $\hat{\Gamma}$ are Borel measurable by \cite[Proposition 2]{lesigne} and \cite[Proposition 10.5]{fw}.
\end{rem}

Suppose $\bfZ = (Z, \alpha)$ is an ergodic Kronecker system, $H$ is a compact abelian group,
and $\sigma : \Gamma \times Z \to H$ is a cocycle.
If $\chi \circ \sigma$ is quasi-affine for every $\chi \in \hat{H}$, then the group extension
$\bfX \times_{\sigma} H$ is called a \emph{quasi-affine} or \emph{Conze--Lesigne} system.

We now give characterizations of cocycles that are cohomologous to a character or quasi-affine
in the same vein as Lemma \ref{lem: coboundary}:

\begin{lem}[\cite{abb}, Proposition 7.13] \label{lem: cohomologous to character}
	Let $\bfZ = (Z, \alpha)$ be an ergodic Kronecker system and $\sigma : \Gamma \times Z \to S^1$ a cocycle.
	The following are equivalent:
	\begin{enumerate}[(i)]
		\item	$\sigma$ is cohomologous to a character;
		\item	for every $t \in Z$,
			\begin{equation*}
				\frac{\sigma_g(z+t)}{\sigma_g(z)}
			\end{equation*}
			is a coboundary;
		\item	there is a Borel set $A \subseteq Z$ with $m_Z(A) > 0$ such that
			\begin{equation*}
				\frac{\sigma_g(z+t)}{\sigma_g(z)}
			\end{equation*}
			is a coboundary for every $t \in A$;
		\item	for any sequence $(g_n)_{n \in \N}$ in $\Gamma$ with $\alpha_{g_n} \to 0$ in $Z$,
			there is a sequence $(c_n)_{n \in \N}$ in $S^1$ such that $c_n \sigma_{g_n}(z) \to 1$ in $L^2(Z)$.
	\end{enumerate}
\end{lem}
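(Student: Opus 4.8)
The plan is to prove the cycle $(i)\Rightarrow(ii)\Rightarrow(iii)\Rightarrow(iv)\Rightarrow(i)$, with essentially all the work in the last two steps. The implication $(ii)\Rightarrow(iii)$ is trivial (take $A=Z$), and $(i)\Rightarrow(ii)$ is a one-line computation: if $\sigma_g(z)=\gamma(g)F(z+\alpha_g)/F(z)$, then $\sigma_g(z+t)/\sigma_g(z)=\Delta_g[F(\cdot+t)/F](z)$ is a coboundary for every $t\in Z$. It is also worth recording the easy implication $(iv)\Rightarrow(ii)$: if $\alpha_{g_n}\to0$ and $c_n\sigma_{g_n}\to1$ in $L^2(Z)$, then, since translation is an $L^2$-isometry fixing constants, $\|\sigma_{g_n}(\cdot+t)-\sigma_{g_n}\|_{L^2}\le 2\|c_n\sigma_{g_n}-1\|_{L^2}\to0$ for each fixed $t$, so Lemma~\ref{lem: coboundary} applied to the cocycle $z\mapsto\sigma_g(z+t)/\sigma_g(z)$ shows it is a coboundary.

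For $(iii)\Rightarrow(iv)$, fix $(g_n)$ with $\alpha_{g_n}\to0$. For each $t\in A$, Lemma~\ref{lem: coboundary} gives $\|\sigma_{g_n}(\cdot+t)-\sigma_{g_n}\|_{L^2}^2\to0$, so (the integrands being uniformly bounded) dominated convergence yields $\int_A\|\sigma_{g_n}(\cdot+t)-\sigma_{g_n}\|_{L^2}^2\,dm_Z(t)\to0$. Next one observes that $B=\{t\in Z:\ z\mapsto\sigma_g(z+t)/\sigma_g(z)\ \text{is a coboundary}\}$ is a subgroup of $Z$ containing $A$ --- closure under negation and addition following from the cocycle identity and the transformation behaviour of transfer functions under base translation --- so, $m_Z(A)$ being positive, a Steinhaus-type argument shows $B$ contains a neighbourhood of $0$ and is thus an open, finite-index subgroup. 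Fixing Borel transfer functions $G_t:Z\to S^1$ ($t\in B$) with $\sigma_g(z+t)/\sigma_g(z)=G_t(z+\alpha_g)/G_t(z)$ by measurable selection (as after Definition~\ref{defn: quasi-affine}), it remains to produce the scalars $c_n$: I would average the identity $\sigma_{g_n}(z+t)\,\overline{G_t(z)}=\sigma_{g_n}(z)\,\overline{G_t(z+\alpha_{g_n})}$ over $t\in B$ against a suitable kernel and combine the displayed $L^2$-decay with strong continuity of translation to force $\sigma_{g_n}$ to within $o(1)$ of a constant in $L^2(Z)$. Extracting a genuine constant here --- as opposed to an error term still depending on $z$ --- is the first point that needs care.

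For $(iv)\Rightarrow(i)$ I would carry out the projective analogue of the hard direction of Lemma~\ref{lem: coboundary}. Write $\mathcal{F}$ for the group of measurable functions $Z\to S^1$ with the $L^2$-metric, and $\mathcal{A}=\mathcal{F}/S^1$. Condition $(iv)$ together with the cocycle identity for $\sigma$ shows that $\alpha_g\mapsto[\sigma_g]\in\mathcal{A}$ is uniformly continuous, hence extends to a continuous map $\bar\omega:Z\to\mathcal{A}$ satisfying the twisted cocycle identity $\bar\omega(s+u)=(T_u\bar\omega(s))\,\bar\omega(u)$. Averaging $\bar\omega$ over $Z$ against Haar measure and using translation invariance should produce $\Phi\in\mathcal{A}$ with $\bar\omega(u)=\Phi/(T_u\Phi)$; lifting $\Phi$ to a function $F:Z\to S^1$ and comparing representatives then gives $\sigma_g(z)=\gamma(g)\,F(z+\alpha_g)/F(z)$ for some $\gamma:\Gamma\to S^1$, and the cocycle identity for $\sigma$ forces $\gamma$ to be a homomorphism, i.e.\ $\gamma\in\hat{\Gamma}$ --- which is $(i)$.

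The main obstacle, I expect, is making that last integration rigorous. Since $\bar\omega$ is defined only modulo constants, a Borel lift $\omega:Z\to\mathcal{F}$ satisfies the cocycle identity only up to a measurable $S^1$-valued $2$-cocycle $\rho$, which contaminates the naive computation $\Phi(z)=\int_Z\omega(s)(z)\,ds$ when one substitutes to relate $\Phi(z+u)$ to $\Phi(z)$. One must therefore either trivialize $\rho$ --- exploiting that $\bar\omega$ genuinely lifts to the $\mathcal{F}$-valued cocycle $\sigma$ on the dense subgroup $\alpha(\Gamma)$, and/or normalizing the lift so that $\omega(-s)=\overline{T_{-s}\omega(s)}$ --- or else build the integral of a continuous $\mathcal{A}$-valued function directly inside the complete (but non-linear) group $\mathcal{A}$. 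Everything else reduces to bookkeeping with the cocycle identity, $L^2$-continuity of translation, and the measurable selection results already cited in the paper.
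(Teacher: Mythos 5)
The paper does not prove this lemma; it is cited directly from \cite[Proposition 7.13]{abb}, so your attempt can only be assessed on its own terms. Your cycle $(i)\Rightarrow(ii)\Rightarrow(iii)\Rightarrow(iv)\Rightarrow(i)$, with the easy implications dispatched quickly, is the natural skeleton, and the treatments of $(i)\Rightarrow(ii)$, $(ii)\Rightarrow(iii)$, and the side observation $(iv)\Rightarrow(ii)$ are correct. Both hard implications, however, contain genuine gaps.

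In $(iii)\Rightarrow(iv)$ you establish that $B=\{t\in Z : z\mapsto\sigma_g(z+t)/\sigma_g(z)~\text{is a coboundary}\}$ is an open finite-index subgroup and then reach for a kernel-averaging device that you concede is unresolved. The missing point is that in fact $B=Z$: the cocycle identity for $\sigma$ gives
\begin{equation*}
	\frac{\sigma_g(z+\alpha_h)}{\sigma_g(z)} = \frac{\sigma_h(z+\alpha_g)}{\sigma_h(z)} = \Delta_g \sigma_h(z),
\end{equation*}
so $\alpha(\Gamma)\subseteq B$; since $B$ is open (hence closed) and $\alpha(\Gamma)$ is dense by ergodicity, $B=Z$. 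With that, dominated convergence and the Fourier expansion give $\int_Z \left\| \sigma_{g_n}(\cdot+t)-\sigma_{g_n} \right\|_{L^2(Z)}^2 dt = 2\sum_{\chi\ne 1}|\hat{\sigma}_{g_n}(\chi)|^2 \to 0$, so $\sigma_{g_n}-\hat{\sigma}_{g_n}(1)\to 0$ in $L^2$ with $|\hat{\sigma}_{g_n}(1)|\to 1$, and $c_n = \overline{\hat{\sigma}_{g_n}(1)}/|\hat{\sigma}_{g_n}(1)|$ already works; no kernel is needed. If $B$ were allowed to be a proper finite-index subgroup, the same computation would push $\sigma_{g_n}$ only onto the span of the finite dual $B^\perp$, not onto constants, and $(iv)$ could simply fail --- so the step you flagged as needing care was not a finessing issue but the heart of the matter, and the cure is the identity $B=Z$, not a cleverer kernel.

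For $(iv)\Rightarrow(i)$ you have correctly located the obstruction: a Borel lift $\omega$ of the continuous $\mathcal{A}$-valued cocycle $\bar\omega$ (where $\mathcal{A}=\mathcal{F}/S^1$ is not a linear space) only satisfies the cocycle identity up to an $S^1$-valued $2$-cocycle, so the naive $\Phi(z)=\int_Z\omega(s)(z)\,ds$ does not transform correctly under translation. Neither of the fixes you float --- normalizing $\omega(-s)=\overline{T_{-s}\omega(s)}$, or building the integral ``inside $\mathcal{A}$'' --- is carried far enough to show the $2$-cocycle actually trivializes, so this direction is not a proof. A route that stays inside the toolbox already in the paper is to apply Lemma~\ref{lem: coboundary} (valid, as the remark there notes, over non-ergodic Kronecker systems) to the $S^1$-valued cocycle $\tau_g(z_1,z_2)=\sigma_g(z_1)\overline{\sigma_g(z_2)}$ on $(Z\times Z,\alpha\times\alpha)$: condition $(iv)$ gives $\tau_{g_n}=(c_n\sigma_{g_n})\otimes\overline{(c_n\sigma_{g_n})}\to 1$ whenever $\alpha_{g_n}\to 0$, hence $\tau=\Delta G$ for some $G:Z^2\to S^1$, after which one extracts a transfer function and a character from $G$. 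That last extraction still needs an argument, but it is a cocycle computation in $L^2(Z^2)$ rather than a bespoke integration theory on the projective space $\mathcal{A}$.
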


\begin{lem}[\cite{abb}, Proposition 7.15] \label{lem: quasi-affine}
	Let $\bfZ = (Z, \alpha)$ be an ergodic Kronecker system and $\sigma : \Gamma \times Z \to S^1$ a cocycle.
	The following are equivalent:
	\begin{enumerate}[(i)]
		\item	$\sigma$ is quasi-affine;
		\item	there is a Borel set $A \subseteq Z$ with $m_Z(A) > 0$ such that
			\begin{equation*}
				\frac{\sigma_g(z+t)}{\sigma_g(z)}
			\end{equation*}
			is cohomologous to a character for every $t \in A$;
		\item	for any sequence $(g_n)_{n \in \N}$ in $\Gamma$ with $\alpha_{g_n} \to 0$ in $Z$,
			there are sequences $(c_n)_{n \in \N}$ in $S^1$ and $(\lambda_n)_{n \in \N}$ in $\hat{Z}$ such that
			$c_n \lambda_n(z) \sigma_{g_n}(z) \to 1$ in $L^2(Z)$.
	\end{enumerate}
\end{lem}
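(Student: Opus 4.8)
The plan is to deduce all three equivalences from the corresponding dichotomy for cocycles cohomologous to a character (Lemma \ref{lem: cohomologous to character}), which already converts between a spectral $L^2$-approximation condition along sequences $(g_n)$ with $\alpha_{g_n} \to 0$ and the cohomology condition itself. For $t \in Z$, write $\partial_t\sigma_g(z) := \sigma_g(z+t)/\sigma_g(z)$; this is again a cocycle over $\bfZ$, and by Definition \ref{defn: quasi-affine} the cocycle $\sigma$ is quasi-affine exactly when $\partial_t\sigma$ is cohomologous to a character for every $t \in Z$. Two elementary observations will be used throughout: the cocycle identity $\partial_{s+t}\sigma_g(z) = \partial_s\sigma_g(z+t) \cdot \partial_t\sigma_g(z)$, and the fact that being cohomologous to a character is preserved under the rotation $z \mapsto z+t$ (replace the transfer function $F$ by $F(\cdot+t)$) and under forming pointwise products and inverses (multiply, resp.\ invert, the transfer functions and characters). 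The implication (i) $\Rightarrow$ (ii) is trivial, taking $A = Z$.

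For (iii) $\Rightarrow$ (i), fix $t \in Z$; I verify condition (iv) of Lemma \ref{lem: cohomologous to character} for the cocycle $\partial_t\sigma$. Given a sequence $(g_n)$ with $\alpha_{g_n}\to 0$, take $c_n \in S^1$ and $\lambda_n \in \hat Z$ from (iii), so $c_n\lambda_n\sigma_{g_n} \to 1$ in $L^2(Z)$; translating the argument by $t$ gives $c_n\lambda_n(\cdot+t)\sigma_{g_n}(\cdot+t) \to 1$ as well, and dividing these two $S^1$-valued sequences (each converging to $1$ in $L^2$) while using $\lambda_n(z+t) = \lambda_n(t)\lambda_n(z)$ yields $\lambda_n(t)\,\partial_t\sigma_{g_n}(z) \to 1$ in $L^2(Z)$. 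This is precisely the approximation required in Lemma \ref{lem: cohomologous to character}(iv), with unimodular constants $\lambda_n(t)$, so $\partial_t\sigma$ is cohomologous to a character; since $t$ was arbitrary, $\sigma$ is quasi-affine.

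For (i) $\Rightarrow$ (iii), fix $(g_n)$ with $\alpha_{g_n}\to 0$ and set $f_n := \sigma_{g_n}$, a unit vector in $L^2(Z)$. For each $t$, Lemma \ref{lem: cohomologous to character}(iv) applied to $\partial_t\sigma$ gives $\inf_{c\in S^1}\norm{L^2(Z)}{c\,f_n(\cdot+t) - f_n} \to 0$; since this infimum equals $\bigl(2 - 2\lvert\innprod{f_n(\cdot+t)}{f_n}\rvert\bigr)^{1/2}$, the autocorrelation $\phi_n(t) := \innprod{f_n(\cdot+t)}{f_n}$ satisfies $\lvert\phi_n(t)\rvert \to 1$ for every $t$, whence $\int_Z \lvert\phi_n\rvert^2\,dm_Z \to 1$ by dominated convergence ($\lvert\phi_n\rvert \le 1$, and $t \mapsto \phi_n(t)$ is continuous). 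Expanding $f_n = \sum_{\lambda\in\hat Z}a_{n,\lambda}\lambda$ in the orthonormal basis of characters gives $\phi_n = \sum_\lambda\lvert a_{n,\lambda}\rvert^2\lambda$, so by Parseval $\sum_\lambda\lvert a_{n,\lambda}\rvert^4 \to 1$; combined with $\sum_\lambda\lvert a_{n,\lambda}\rvert^2 = 1$ this forces $\max_\lambda\lvert a_{n,\lambda}\rvert \to 1$. Choosing $\lambda_n$ to (nearly) attain the maximum and $c_n \in S^1$ to correct the phase of $a_{n,\lambda_n}$, a final application of Parseval yields $\norm{L^2(Z)}{c_n\overline{\lambda_n}(z)\,f_n(z) - 1} \to 0$, which is (iii) (with the character $\overline{\lambda_n} \in \hat Z$).

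It remains to prove (ii) $\Rightarrow$ (i), which I expect to be the crux of the argument. Put $B := \{t \in Z : \partial_t\sigma \text{ is cohomologous to a character}\}$. The two observations above show that $B$ is a subgroup of $Z$; moreover, for $s,t$ in the Borel set $A$ supplied by (ii), a rotation of $\partial_{s-t}\sigma$ equals the quotient $\partial_s\sigma / \partial_t\sigma$ and is therefore cohomologous to a character, so $A - A \subseteq B$. By the Steinhaus--Weil theorem $A - A$ contains an open neighborhood of $0$ (here $m_Z(A) > 0$ is used), hence so does $B$; being a subgroup, $B$ is then open, and therefore closed. Finally, $\alpha(\Gamma) \subseteq B$: since $T_g$ acts on $Z$ by the rotation $z \mapsto z + \alpha_g$, the cocycle equation gives $\partial_{\alpha_g}\sigma_h(z) = \sigma_h(T_gz)/\sigma_h(z) = \sigma_g(T_hz)/\sigma_g(z)$, which as a cocycle in $h$ is the coboundary of the function $\sigma_g$ and in particular cohomologous to the trivial character. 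Thus $B$ is a closed subgroup containing the dense subgroup $\alpha(\Gamma)$, so $B = Z$; that is, $\sigma$ is quasi-affine. The heart of this last step --- and the main obstacle overall --- is the passage from ``some positive-measure set of good parameters $t$'' to ``every $t$'': the key points are that $A - A \subseteq B$ (pure cocycle bookkeeping) and that $\alpha(\Gamma) \subseteq B$ because the corresponding derivative cocycles are coboundaries, so that the positive-measure hypothesis is exactly what promotes the a priori merely dense subgroup $B$ to a clopen, hence closed, one.
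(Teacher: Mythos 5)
Your proof is correct and complete. Since the statement is cited from \cite{abb} (Proposition 7.15) rather than proved in this paper, I can only compare against the parallel structure of Lemma~\ref{lem: cohomologous to character}: your overall template --- reducing the quasi-affine characterizations to the cohomologous-to-a-character characterizations applied to the derivative cocycles $\partial_t\sigma$ --- is exactly what one expects, and your three implications are organized cleanly around that reduction. A few checks on the nontrivial steps. In (iii)~$\Rightarrow$~(i), dividing two $S^1$-valued sequences each converging to $1$ in $L^2$ is licit precisely because $|a_n/b_n - 1| = |a_n - b_n|$ when $|b_n|=1$, so the triangle inequality closes the estimate; the constant $\lambda_n(t)$ you extract has modulus $1$, so Lemma~\ref{lem: cohomologous to character}(iv) applies. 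In (i)~$\Rightarrow$~(iii), the computation $\inf_{c}\norm{L^2(Z)}{c\,f_n(\cdot+t) - f_n}^2 = 2 - 2|\phi_n(t)|$ uses $|f_n|=1$; then $\phi_n = \sum_\lambda |a_{n,\lambda}|^2\lambda$ by translation of characters, Parseval gives $\sum_\lambda|a_{n,\lambda}|^4 \to 1$, and since $\sum_\lambda|a_{n,\lambda}|^4 \le \max_\lambda|a_{n,\lambda}|^2$ this forces the dominant-coefficient conclusion. (The maximum is actually attained, since $(|a_{n,\lambda}|^2)_\lambda \in \ell^1$ with sum $1$, so your ``nearly'' is over-cautious but harmless.) In (ii)~$\Rightarrow$~(i), the crucial identity $\partial_{\alpha_g}\sigma_h(z) = \sigma_g(T_h z)/\sigma_g(z) = \Delta_h\sigma_g(z)$ follows from symmetrizing the cocycle equation $\sigma_{g+h}(z) = \sigma_g(T_h z)\sigma_h(z) = \sigma_h(T_g z)\sigma_g(z)$, and it is exactly this that puts $\alpha(\Gamma) \subseteq B$. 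With $A - A \subseteq B$ (cocycle bookkeeping), Steinhaus--Weil forces $B$ to be open, hence a clopen subgroup containing the dense $\alpha(\Gamma)$, hence all of $Z$. The argument is self-contained and would serve as a proof of the cited proposition; you have correctly identified (ii)~$\Rightarrow$~(i) as the only step requiring a genuine idea beyond soft manipulations.
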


%%%%%%%%%%%%%%%%%%%%%%%%%%%%%%%%%%%%%%%%%%%%%%%%%%%%%%%%

\subsection{Mackey groups} \label{sec: Mackey prelim}

Mackey groups play an essential role in this paper.
We review the definition and the basic properties of the Mackey group in this section.

Let $\bfX = \left( X, \mX, \mu, (T_g)_{g \in \Gamma} \right)$ be an ergodic $\Gamma$-system,
$H$ a compact abelian group, and $\sigma : \Gamma \times X \to H$ a cocycle.
The \emph{range} of $\sigma$ is the closed subgroup $G_{\sigma}$ generated by $\{\sigma_g(x) : g \in \Gamma, x \in X\}$.
The cocycle $\sigma$ is \emph{minimal} if there is no cohomologous cocycle $\tau$ with $G_{\tau} \subsetneq G_{\sigma}$.

\begin{prop}[cf. \cite{glasner}, Theorem 3.25]
	Any cocycle $\sigma : \Gamma \times X \to H$ is cohomologous to a minimal cocycle.
	Moreover, if two minimal cocycles are cohomologous, then they have the same range.
\end{prop}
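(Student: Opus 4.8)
The plan is to read everything off from the ergodic decomposition of the group extension $\bfX \times_\sigma H$. Equip $X \times H$ with the $\Gamma$-action $(T^\sigma_g)_{g\in\Gamma}$ and with the commuting $H$-action by fiber translations $R_h(x,y) = (x, y+h)$; both preserve $\nu := \mu \times m_H$. Since $R_h$ permutes the ergodic components of $\nu$, the compact group $H$ acts on the standard Borel space $\Omega$ of ergodic components, preserving the measure of the ergodic decomposition.

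First I would prove existence. A short argument shows the $H$-action on $\Omega$ is ergodic: averaging over an $H$-invariant set of ergodic components of intermediate measure produces an $H$-invariant, $\Gamma$-invariant probability measure absolutely continuous with respect to $\nu$; by $H$-invariance such a measure has the form $(c\cdot\mu)\times m_H$, and $\Gamma$-invariance together with ergodicity of $\bfX$ forces $c$ to be constant, hence the measure equals $\nu$, a contradiction. An ergodic measure-preserving action of a compact group is (essentially) transitive, so $\Omega \cong H/G$ for a closed subgroup $G \le H$, and — $H$ being abelian — every ergodic component of $\nu$ has stabilizer exactly $G$. Fix one such component $\hat{\mu}$ and disintegrate it over $X$ as $\hat{\mu} = \int_X \delta_x \otimes \rho_x \, d\mu(x)$; its $G$-invariance forces each $\rho_x$ to be Haar measure on a coset $F(x) + G$, and the identity $(T^\sigma_g)_* \hat{\mu} = \hat{\mu}$ becomes $\sigma_g(x) + F(x) - F(T_g x) \in G$ for every $g$ and a.e.\ $x$. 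Choosing $F : X \to H$ measurable via a Borel section of $H \to H/G$, the cocycle $\tau_g(x) := \sigma_g(x) - \Delta_g F(x)$ is cohomologous to $\sigma$ and, after discarding a null set, takes values in $G$; the change of variables $(x,y) \mapsto (x, y - F(x))$ conjugates $(T^\sigma_g)$ to $(T^\tau_g)$ and carries $\hat{\mu}$ to $\mu \times m_G$, exhibiting $\bfX \times_\tau G$ as an ergodic system. If $G_\tau$ were a proper closed subgroup of $G$, then $(x,y) \mapsto y + G_\tau \in G/G_\tau$ would be a nonconstant invariant function on $\bfX \times_\tau G$, which is impossible; so $G_\tau = G$, and running the same argument for an arbitrary cocycle cohomologous to $\sigma$ shows no range can lie properly inside $G$, whence $\tau$ is minimal.

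For uniqueness of the range I would first record the characterization that a cocycle $\tau \sim \sigma$ is minimal if and only if $\bfX \times_\tau G_\tau$ is ergodic: the forward direction is obtained by rerunning the construction above with $\tau$ in place of $\sigma$ (a proper subgroup would contradict minimality), and the reverse direction is the quotient argument just used. Now suppose $\tau$ and $\tau'$ are both minimal and cohomologous, say $\tau' = \tau - \Delta F$. Then $(x,y) \mapsto (x, y + F(x))$ is an $H$-equivariant isomorphism from $\bfX \times_\tau H$ to $\bfX \times_{\tau'} H$ intertwining the $\Gamma$-actions. On the two sides the ergodic components are indexed by $H/G_\tau$ and $H/G_{\tau'}$ respectively (each component being a copy of the ergodic system $\bfX \times_\tau G_\tau$, resp.\ $\bfX \times_{\tau'} G_{\tau'}$), with $H$ acting by translation; an $H$-equivariant isomorphism respecting the ergodic decompositions induces an $H$-equivariant Borel isomorphism $H/G_\tau \to H/G_{\tau'}$, which forces $G_\tau = G_{\tau'}$.

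The hard part will be the appeal to (essential) transitivity of an ergodic measure-preserving action of the compact group $H$, together with the measurable-selection bookkeeping: choosing the disintegration $x \mapsto \rho_x$ and the lift $F$ measurably, and arranging the cocycle identity for $\tau$ to hold off a single null set (using countability of $\Gamma$). These ingredients are standard and are precisely what is packaged in \cite[Theorem 3.25]{glasner}; the remaining steps are routine manipulations with cocycles and coboundaries.
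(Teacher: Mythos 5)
Your proposal is correct and is essentially the standard Zimmer/Mackey argument via the ergodic decomposition of $\bfX \times_\sigma H$ under the commuting $H$-action by fiber translations, which is exactly the route taken in the cited reference \cite[Theorem 3.25]{glasner} (the paper itself gives no proof and simply points to Glasner). One small slip: in the uniqueness paragraph, with $\tau' = \tau - \Delta F$ the conjugating map intertwining $(T^\tau_g)$ with $(T^{\tau'}_g)$ is $(x,y)\mapsto(x,y-F(x))$ (as you correctly wrote in the existence part), not $(x,y)\mapsto(x,y+F(x))$; this does not affect the rest of the argument.
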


The \emph{Mackey group associated to $\sigma$} is defined to be the range of a minimal cocycle cohomologous to $\sigma$.
Several important properties of Mackey groups are collected in the following proposition:

\begin{prop} \label{prop: Mackey properties}~
	\begin{enumerate}[(1)]
		\item	If $M$ is the Mackey group associated to $\sigma$, then the annihilator of $M$ is
			\begin{equation*}
				M^{\perp} = \left\{ \chi \in \hat{H} : \chi \circ \sigma~\text{is a coboundary} \right\}.
			\end{equation*}
		\item	If $\sigma$ and $\tau$ are cohomologous, then their Mackey groups are equal.
		\item	The system $\bfX \times_{\sigma} H$ is ergodic if and only if $\sigma$ is minimal with range $G_{\sigma} = H$.
		\item	If $f : X \times H \to \C$ is $(T^{\sigma}_g)_{g \in \Gamma}$-invariant,
			then $f(x, y+m) = f(x, y)$ for every $m \in M$ and $(\mu \times m_H)$-a.e. $(x,y) \in X \times H$.
	\end{enumerate}
\end{prop}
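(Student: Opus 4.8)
The plan is to obtain item (1) first and then deduce the remaining three items from it, together with the structural facts about minimal cocycles recorded just above and a Fourier decomposition in the fibre $H$. Item (2) is in fact immediate: if $\sigma \sim \tau$, then any minimal cocycle cohomologous to $\sigma$ is also cohomologous to $\tau$, so by the uniqueness of the range of a minimal cocycle the Mackey groups of $\sigma$ and $\tau$ coincide.

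For item (1), fix a minimal cocycle $\tau$ cohomologous to $\sigma$, so that $M = G_\tau$, and note that $\chi \circ \tau \sim \chi \circ \sigma$ for every $\chi \in \hat{H}$. If $\chi \in M^\perp$, then $\chi$ annihilates the range $G_\tau$ of $\tau$, hence $\chi \circ \tau \equiv 1$, and therefore $\chi \circ \sigma$, being cohomologous to a trivial cocycle, is a coboundary. For the reverse inclusion, suppose $\chi \circ \sigma$ is a coboundary but $\chi \notin M^\perp$; we derive a contradiction with the minimality of $\tau$ by exhibiting a cohomologous cocycle of strictly smaller range. Since $\chi \circ \tau \sim \chi \circ \sigma$, it is a coboundary, say $\chi \circ \tau = \Delta G$ with $G : X \to S^1$ measurable. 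As $\Delta G = \chi \circ \tau$ takes values in the closed subgroup $\chi(M) \subseteq S^1$, the reduction of $G$ modulo $\chi(M)$ is $(T_g)_{g \in \Gamma}$-invariant, hence constant by ergodicity of $\bfX$; multiplying $G$ by a suitable constant, we may assume $G$ takes values in $\chi(M)$. Picking a Borel section $s : \chi(M) \to M$ of $\chi|_M$ and setting $\tilde{G} = s \circ G$, the cocycle $\tau' := \tau - \Delta\tilde{G}$ is cohomologous to $\tau$, takes values in $M$, and satisfies $\chi \circ \tau' = (\chi \circ \tau) \cdot \overline{\Delta(\chi \circ \tilde{G})} \equiv 1$. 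Thus $G_{\tau'} \subseteq \ker\chi \cap M$, which is a proper subgroup of $M$ since $\chi|_M$ is nontrivial, contradicting the minimality of $\tau$. (When $\chi$ is trivial both sides of the claimed identity hold trivially.)

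Items (3) and (4) then follow by Fourier analysis in the fibre. Expand $f \in L^2(X \times H)$ as $f(x,y) = \sum_{\chi \in \hat{H}} f_\chi(x)\,\chi(y)$; then $f$ is $(T^{\sigma}_g)_{g \in \Gamma}$-invariant if and only if $f_\chi(T_gx)\,\chi(\sigma_g(x)) = f_\chi(x)$ for every $\chi$ and every $g$. Taking absolute values and invoking ergodicity of $\bfX$, any $f_\chi \neq 0$ has constant modulus, and normalizing it to modulus one exhibits $\chi \circ \sigma$ as a coboundary, so $\chi \in M^\perp$ by item (1). For item (4), such $\chi$ satisfy $\chi(m) = 1$ for all $m \in M$, whence $f(x, y+m) = \sum_\chi f_\chi(x)\,\chi(y)\,\chi(m) = f(x,y)$. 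For item (3), $\bfX \times_\sigma H$ is ergodic exactly when the only invariant $f$ are constant, which by the previous observation happens exactly when the only $\chi$ with $\chi \circ \sigma$ a coboundary is trivial (for the converse one uses that an invariant $f$ with $f_\chi = 0$ for all $\chi \neq 1$ reduces to an $\bfX$-invariant, hence constant, function of $x$ alone), i.e.\ when $M^\perp = \{1\}$, i.e.\ when $M = H$; a short argument with characters that kill $G_\sigma$, respectively the range of a cocycle cohomologous to $\sigma$, upgrades the statement ``$M = H$'' to ``$\sigma$ is minimal with range $H$''.

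The main obstacle is the reverse inclusion in item (1): making the range-reduction argument rigorous requires both the ergodicity normalization that forces $G$ into $\chi(M)$ and a Borel measurable section for the surjection $\chi|_M : M \to \chi(M)$ of compact metrizable groups. Everything else is bookkeeping with Fourier coefficients and the elementary properties of coboundaries and minimal cocycles; the relevant background can be found in \cite{glasner}.
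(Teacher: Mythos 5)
Your proof is correct, but it takes a more self-contained route than the paper. For items (1), (2), and (3) the paper simply cites Host--Kra \cite{hk-CL} (for (1)) and Glasner \cite{glasner} (for (2) and (3)), and proves only item (4) from scratch; you instead prove (1) directly and then derive (3) and (4) from it via the fibre Fourier decomposition. Your argument for (4) is essentially identical to the paper's (expand $f$ in characters of $H$, observe the functional equation $c_\chi(x) = c_\chi(T_gx)\,\chi(\sigma_g(x))$, use ergodicity to normalize $|c_\chi|$ and conclude $\chi\in M^\perp$). The genuinely new content is your proof of the nontrivial inclusion in (1): the normalization of the transfer function $G$ modulo $\chi(M)$ by ergodicity, followed by a Borel section $s:\chi(M)\to M$ to build the cohomologous cocycle $\tau' = \tau - \Delta(s\circ G)$ with range inside $\ker\chi\cap M\subsetneq M$, cleanly contradicting minimality of $\tau$. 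One small point worth making explicit in (3): the implication ``ergodic $\Rightarrow M^\perp = \{1\}$'' requires producing a nonconstant invariant function from a nontrivial $\chi\in M^\perp$, namely $f(x,y)=\overline{G(x)}\chi(y)$ where $\chi\circ\sigma = \Delta G$; you state the equivalence but only spell out the other direction. That said, this is a standard construction and the gap is cosmetic. Overall the argument buys self-containment at the cost of one measurable-selection step (which the paper itself uses elsewhere, so the technology is already in scope).
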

\begin{proof}
	For property (1), see \cite[Proposition 2.5]{hk-CL}.
	Properties (2) and (3) are proved in \cite[Theorem 3.25]{glasner}.
	
	In the case $\Gamma = \Z$, property (4) appears in \cite[Proposition 2.4]{hk-CL} (see also \cite[Proposition 7.1]{fw}).
	We give a quick proof for general $\Gamma$.
	Suppose $f : X \times H \to \C$ is $\left( T^{\sigma}_g \right)_{g \in \Gamma}$-invariant.
	We may expand $f$ as a Fourier series:
	\begin{equation*}
		f(x,y) = \sum_{\chi \in \hat{H}}{c_{\chi}(x) \chi(y)}.
	\end{equation*}
	Then
	\begin{equation*}
		\left( T^{\sigma}_g f \right) (x,y) = \sum_{\chi \in \hat{H}}{c_{\chi}(T_gx) \chi(y + \sigma_g(x))} = \sum_{\chi \in \hat{H}}{c_{\chi}(T_gx) \chi(\sigma_g(x)) \chi(y)}.
	\end{equation*}
	The assumption that $T^{\sigma}_gf = f$ then implies
	\begin{equation} \label{eq: chi cohomology}
		c_{\chi}(x) = c_{\chi}(T_gx) \chi(\sigma_g(x))
	\end{equation}
	for every $\chi \in \hat{H}$.
	Hence, if $c_{\chi} \ne 0$, then \eqref{eq: chi cohomology} expresses $\chi \circ \sigma$ as a coboundary.
	Thus, $c_{\chi} = 0$ for $\chi \notin M^{\perp}$.
	For any $m \in M$, we therefore have
	\begin{equation*}
		f(x,y+m) = \sum_{\chi \in M^{\perp}}{c_{\chi}(x) \chi(y + m)} = \sum_{\chi \in M^{\perp}}{c_{\chi}(x) \chi(y)} = f(x,y)
	\end{equation*}
	as claimed.
\end{proof}

For a compact abelian group $K$, let $\M(Z, K)$ be the space of measurable functions $Z \to K$
with the topology of convergence in measure.
One can show that a sequence $(f_n)_{n \in \N}$ in $\M(Z,K)$ converges
if and only if $\left( \chi \circ f_n \right)_{n \in \N}$ converges in $L^2(Z)$ for every $\chi \in \hat{K}$;
see, e.g., \cite[Lemma 7.28]{abb}.
The following is an easy consequence of Corollary \ref{cor: coboundary continuous extension}
combined with the description of the Mackey group in item (1) of Proposition \ref{prop: Mackey properties}:

\begin{prop} \label{prop: continuous extension mod Mackey}
	Let $\bfZ = (Z, \alpha)$ be an ergodic Kronecker system, $H$ a compact abelian group,
	and $\sigma : \Gamma \times Z \to H$ a cocycle with Mackey group $M \subseteq H$.
	Then there is a function $\omega : Z \times Z \to H/M$ such that $t \mapsto \omega(t, \cdot)$ is a continuous
	map from $Z$ to $\M(Z, H/M)$ and
	\begin{equation*}
		\omega(\alpha_g, z) \equiv \sigma_g(z) \pmod{M}.
	\end{equation*}
\end{prop}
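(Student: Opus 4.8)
The plan is to push the cocycle $\sigma$ down to the quotient group $H/M$, observe that the resulting quotient cocycle is a coboundary, and then produce $\omega$ exactly as in the proof of Corollary \ref{cor: coboundary continuous extension} (with $S^1$ replaced by $H/M$). This is precisely the ``easy consequence'' alluded to in the statement, with Proposition \ref{prop: Mackey properties}(1) used to verify the coboundary hypothesis.

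First I would let $q : H \to H/M$ be the quotient homomorphism and form the cocycle $\bar\sigma := q \circ \sigma : \Gamma \times Z \to H/M$. Under the Pontryagin duality identification $\widehat{H/M} \cong M^{\perp} = \{\chi \in \hat H : \chi|_M \equiv 1\}$, every $\bar\chi \in \widehat{H/M}$ satisfies $\bar\chi \circ \bar\sigma = \chi \circ \sigma$ for the corresponding $\chi \in M^{\perp}$, and by Proposition \ref{prop: Mackey properties}(1) this is a coboundary. Hence the annihilator of the Mackey group $M_{\bar\sigma}$ of $\bar\sigma$ contains all of $\widehat{H/M}$, so $M_{\bar\sigma}$ is trivial. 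By definition of the Mackey group, $\bar\sigma$ is then cohomologous to a minimal cocycle whose range is $\{0\}$, i.e.\ to the zero cocycle; thus $\bar\sigma$ is a coboundary, and we may write $\bar\sigma_g(z) = F(z + \alpha_g) - F(z)$ for some measurable $F : Z \to H/M$ (here ergodicity of $\bfZ$ is what makes the Mackey-group machinery available).

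Next I would simply set $\omega(t,z) := F(z + t) - F(z)$. Then $\omega(\alpha_g, z) = F(z+\alpha_g) - F(z) = \bar\sigma_g(z) \equiv \sigma_g(z) \pmod{M}$, which is the required identity. Continuity of $t \mapsto \omega(t, \cdot)$ from $Z$ to $\M(Z, H/M)$ reduces to continuity of the translation map $t \mapsto F(\,\cdot + t\,)$ in the topology of convergence in measure; this is standard (verify it on continuous functions $Z \to H/M$ via uniform continuity and dominated convergence, then pass to general $F$ by density using Lusin's theorem), and is exactly the ingredient used in the proof of Corollary \ref{cor: coboundary continuous extension}.

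The argument is essentially routine. The only points requiring a little care are the bookkeeping with Pontryagin duality --- namely that $\widehat{H/M}$ is identified with $M^{\perp}$ and that an annihilator equal to the whole dual group forces the Mackey group to be trivial --- together with the (standard, but worth stating) fact that over an ergodic base a cocycle with trivial Mackey group is a coboundary. Everything else is a direct transcription of the proof of Corollary \ref{cor: coboundary continuous extension}. A more laborious alternative, closer to a literal reading of the hint, would be to apply Corollary \ref{cor: coboundary continuous extension} to each coboundary $\chi \circ \sigma$ with $\chi \in M^{\perp}$, obtain $S^1$-valued continuous extensions $\omega_\chi$, use ergodicity to get uniqueness and hence multiplicativity in $\chi$, and then reassemble these into an $H/M$-valued function via duality; the quotient-cocycle route above sidesteps this reassembly.
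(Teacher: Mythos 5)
Your argument is correct and is exactly the "easy consequence" the paper alludes to: push $\sigma$ to $H/M$, use Proposition \ref{prop: Mackey properties}(1) together with the identification $\widehat{H/M}\cong M^{\perp}$ to see that the quotient cocycle has trivial Mackey group and hence is a coboundary over the ergodic base, and then run the construction from Corollary \ref{cor: coboundary continuous extension} verbatim with $H/M$ in place of $S^1$, checking continuity in $\M(Z, H/M)$ characterwise via the criterion the paper states just before the proposition. This matches the paper's intent; no gaps.
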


\begin{rem}
	If the cocycle $\sigma$ is minimal, then it takes values in the Mackey group $M$, in which case $\omega = 0$.
\end{rem}

The Mackey group plays an important role in the analysis of ergodic averages, as demonstrated by the following result:

\begin{prop}[cf. \cite{abb}, Proposition 7.7] \label{prop: Mackey ET}
	Let $\bfX = \left( X, \mX, \mu, (T_g)_{g \in \Gamma} \right)$ be an ergodic $\Gamma$-system,
	$H$ a compact abelian group, and $\sigma : \Gamma \times X \to H$ a cocycle.
	Let $M$ be the Mackey group associated to $\sigma$.
	Let $f \in L^2(\mu \times m_H)$.
	If for every $\chi \in M^{\perp}$ and $\mu$-a.e. $x \in X$, one has
	\begin{equation*}
		\int_H{f(x,y) \chi(y)~dy} = 0,
	\end{equation*}
	then
	\begin{equation*}
		\UClim_{g \in \Gamma}{T^{\sigma}_gf} = 0
	\end{equation*}
	in $L^2(\mu \times m_H)$.
\end{prop}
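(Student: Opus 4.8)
The plan is to identify the uniform Ces\`aro limit $\UClim_{g\in\Gamma}T^\sigma_g f$ with an orthogonal projection of $f$ and then show that this projection vanishes, using Proposition \ref{prop: Mackey properties}(4) to constrain its Fourier expansion in the fiber coordinate.

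First I would record that the limit exists and is an invariant function. Writing $U_g := T^\sigma_g$ for the Koopman unitaries of $\bfX \times_\sigma H$ on $\Hil := L^2(\mu \times m_H)$ and using that $\Gamma$ is amenable, the mean ergodic theorem gives, along every F\o{}lner sequence $(\Phi_N)_{N\in\N}$ in $\Gamma$, convergence of $\frac{1}{|\Phi_N|}\sum_{g\in\Phi_N}U_g f$ in $\Hil$ to the orthogonal projection $Pf$ of $f$ onto the subspace of $(U_g)_{g\in\Gamma}$-fixed vectors, with a limit independent of the F\o{}lner sequence. Hence $\UClim_{g\in\Gamma}T^\sigma_g f = f_0$, where $f_0 := Pf$ is $(T^\sigma_g)_{g\in\Gamma}$-invariant (equivalently, $f_0 = \E{f}{\mI}$ for $\mI$ the invariant $\sigma$-algebra of $\bfX\times_\sigma H$). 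It remains to prove $f_0 = 0$.

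Next I would apply the Mackey machinery. Since $f_0$ is $(T^\sigma_g)_{g\in\Gamma}$-invariant, Proposition \ref{prop: Mackey properties}(4) gives $f_0(x,y+m) = f_0(x,y)$ for every $m\in M$ and $(\mu\times m_H)$-a.e.\ $(x,y)$. Expanding in a Fourier series in the fiber variable, $f_0(x,y) = \sum_{\chi\in\hat H}d_\chi(x)\chi(y)$, and comparing coefficients in the identity $f_0(x,y+m)=f_0(x,y)$ shows that $d_\chi\equiv 0$ whenever $\chi(m)\neq 1$ for some $m\in M$, i.e.\ whenever $\chi\notin M^\perp$ (here one uses that $\hat H$ is countable, so the exceptional null sets can be taken simultaneously over all relevant $\chi$). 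Thus $f_0(x,\cdot)$ has Fourier support contained in $M^\perp$ for a.e.\ $x$. On the other hand, $M^\perp$ is a subgroup of $\hat H$, hence invariant under $\chi\mapsto\overline\chi$, so the hypothesis $\int_H f(x,y)\chi(y)\,dy = 0$ for all $\chi\in M^\perp$ says exactly that $f(x,\cdot)$ has Fourier support disjoint from $M^\perp$ for a.e.\ $x$. Consequently $\innprod{f_0(x,\cdot)}{f(x,\cdot)}_{L^2(H)} = 0$ for a.e.\ $x$, and integrating in $x$ gives $\innprod{f_0}{f}_{\Hil} = 0$. Finally, since $P$ is a self-adjoint projection with $Pf = f_0$ and $Pf_0 = f_0$, we get $\|f_0\|_{\Hil}^2 = \innprod{f_0}{Pf}_{\Hil} = \innprod{Pf_0}{f}_{\Hil} = \innprod{f_0}{f}_{\Hil} = 0$, whence $f_0 = 0$ and $\UClim_{g\in\Gamma}T^\sigma_g f = 0$ in $L^2(\mu\times m_H)$.

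The expected main obstacle is really only bookkeeping: one must invoke the mean ergodic theorem in the version valid uniformly over all F\o{}lner sequences (so that the resulting limit is a genuine uniform Ces\`aro limit), and one must be careful about the conjugation $\chi\mapsto\overline\chi$ when converting the hypothesis into a statement about Fourier supports. The genuinely substantive input, Proposition \ref{prop: Mackey properties}(4), is already available, so no serious difficulty arises. Alternatively, one could bypass the mean ergodic theorem and argue directly via the van der Corput lemma, Lemma \ref{lem: vdC}, after decomposing $\Hil$ into the $T^\sigma$-invariant fiber-Fourier subspaces $\mathcal H_\chi = \{c(x)\chi(y):c\in L^2(\mu)\}$: the hypothesis kills the contribution of each $\chi\in M^\perp$, while for $\chi\notin M^\perp$ the cocycle $\chi\circ\sigma$ is not a coboundary (by Proposition \ref{prop: Mackey properties}(1)), which forces the corresponding averages to vanish.
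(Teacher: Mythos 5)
Your proof is correct. It shares the paper's overall skeleton (invoke the mean ergodic theorem to identify $\UClim_g T^\sigma_g f$ with the projection $f_0=Pf$ onto the invariant subspace, then use Proposition \ref{prop: Mackey properties}(4) to confine the fiberwise Fourier support of $f_0$ to $M^\perp$), but the closing step is genuinely different. The paper proceeds character by character: for $\chi\notin M^\perp$ it kills the coefficient exactly as you do, but for $\chi\in M^\perp$ it invokes Proposition \ref{prop: Mackey properties}(1) to write $\chi\circ\sigma$ as a coboundary $F(T_g x)\overline F(x)$ and then carries out a direct calculation showing $\E{\tilde f\cdot(\ind\otimes\chi)}{\mX}=0$. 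You bypass that coboundary manipulation altogether: you observe that the hypothesis says $f(x,\cdot)$ has Fourier support disjoint from $M^\perp$ (using, as you correctly note, that $M^\perp$ is closed under $\chi\mapsto\overline\chi$), so $\innprod{f_0}{f}=0$, and then close with the projection identity $\norm{}{f_0}^2=\innprod{Pf}{f_0}=\innprod{f}{Pf_0}=\innprod{f}{f_0}=0$. This is slightly cleaner: it uses only Proposition \ref{prop: Mackey properties}(4) and the self-adjointness of the ergodic projection, whereas the paper's argument additionally needs Proposition \ref{prop: Mackey properties}(1). The trade-off is that the paper's version gives the stronger pointwise conclusion that every fiber Fourier coefficient of $\tilde f$ vanishes, while yours reaches $\tilde f=0$ in one global Hilbert-space step.
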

\begin{proof}
	By the mean ergodic theorem, let $\tilde{f} = \UClim_{g \in \Gamma}{T^{\sigma}_gf}$.
	We want to show $\tilde{f} = 0$.
	Since $L^2(H, m_H)$ is spanned by characters, it suffices to show
	\begin{equation*}
		\int_H{\tilde{f}(x,y) \chi(y)~dy} = 0
	\end{equation*}
	for every $\chi \in \hat{H}$ and $\mu$-a.e. $x \in X$.
	We will prove this identity in cases, depending on whether or not $\chi$ annihilates the Mackey group $M$. \\
	
	\underline{Case 1}: $\chi \notin M^{\perp}$.
	
	By item (4) of Proposition \ref{prop: Mackey properties},
	for any $m \in M$ and $(\mu \times m_H)$-a.e. $(x,y) \in X \times H$, we have $\tilde{f}(x,y+m) = \tilde{f}(x,y)$.
	Therefore, for $\mu$-a.e. $x \in X$,
	\begin{equation*}
		\int_H{\tilde{f}(x,y) \chi(y)~dy} = \int_H{\tilde{f}(x,y+m) \chi(y+m)~dy} = \chi(m) \int_H{\tilde{f}(x,y) \chi(y)~dy}.
	\end{equation*}
	Taking $m \in M$ such that $\chi(m) \ne 1$, this implies $\int_H{\tilde{f}(x,y) \chi(y)~dy} = 0$ as claimed. \\
	
	\underline{Case 2}: $\chi \in M^{\perp}$.
	
	Note that
	\begin{equation*}
		\int_H{\tilde{f}(x,y) \chi(y)~dy} = \E{\tilde{f} \cdot (\ind \otimes \chi)}{\mX}(x).
	\end{equation*}
	By item (1) of Proposition \ref{prop: Mackey properties}, there is a measurable function $F : X \to S^1$ such that
	\begin{equation*}
		\chi(\sigma_g(x)) = F(T_gx) \overline{F}(x)
	\end{equation*}
	for $\mu$-a.e. $x \in X$.
	We then compute directly:
	\begin{align*}
		\E{\tilde{f} \cdot (\ind \otimes \chi)}{\mX}
		 & = \UClim_{g \in \Gamma}{\E{\left( T^{\sigma}_gf \right) \cdot (\ind \otimes \chi)}{\mX}} \\
		 & = \UClim_{g \in \Gamma}{\left( \overline{\chi} \circ \sigma_g \right)
		 \cdot \E{T^{\sigma}_g \left( f \cdot (\ind \otimes \chi) \right)}{\mX}} \\
		 & = F \cdot \UClim_{g \in \Gamma}{T_g \left( \overline{F} \cdot \E{f \cdot (\ind \otimes \chi)}{\mX} \right)} \\
		 & = 0.
	\end{align*}
	In the last step, we have used the hypothesis $\E{f \cdot (\ind \otimes \chi)}{\mX} = 0$.
\end{proof}

\begin{cor} \label{cor: Kronecker limit formula}
	Let $\bfZ = (Z, \alpha)$ be an ergodic Kronecker system,
	$H$ a compact abelian group, and $\sigma : \Gamma \times Z \to H$ a cocycle.
	Let $M$ be the Mackey group associated to $\sigma$, and let $\omega : Z \times Z \to H$
	be a measurable map such that $t \mapsto \omega(t, \cdot) + M$ is a continuous map $Z \to \M(Z, H/M)$
	and $\omega(\alpha_g, z) \equiv \sigma_g(z) \pmod{M}$.
	Then for any $f \in L^2(Z \times H)$,
	\begin{equation} \label{eq: Mackey limit formula}
		\UClim_{g \in \Gamma}{f(z + \alpha_g, x + \sigma_g(z))} = \int_{Z \times M}{f(z + t, x + m + \omega(t,z))~dt~dm}
	\end{equation}
	in $L^2(Z \times H)$.
\end{cor}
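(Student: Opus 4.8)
The statement is a limit formula for the averages $\UClim_g f(z+\alpha_g, x+\sigma_g(z))$ over the skew-product system $\bfZ \times_\sigma H$. The natural strategy is to decompose $f$ according to the Mackey group $M$: write $f = f_0 + f_1$, where $f_1$ is the projection onto the functions satisfying the hypothesis of Proposition \ref{prop: Mackey ET} (i.e.\ $\int_H f_1(z,x)\chi(x)\,dx = 0$ for a.e.\ $z$ and every $\chi \in M^\perp$), and $f_0$ is the part that is ``$M$-invariant'' in the second coordinate in the appropriate sense, namely $f_0(z,x) = \int_M f(z, x+m)\,dm$. By Proposition \ref{prop: Mackey ET}, the average of $T^\sigma_g f_1$ tends to $0$ in $L^2$, and one checks that the right-hand side of \eqref{eq: Mackey limit formula} also vanishes for $f_1$ (since the inner integral over $m \in M$ kills exactly those Fourier modes), so it suffices to prove the formula for $f = f_0$.

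For $f = f_0$, the key point is that $f$ only depends on $x$ through its coset $x + M \in H/M$, so we may regard $f$ as a function on $Z \times (H/M)$. Now I would apply Lemma \ref{lem: Hil-valued ET} (unique ergodicity for Hilbert-space-valued functions). The relevant topological system is the Kronecker system $\bfZ = (Z,\alpha)$ itself, which is uniquely ergodic with unique invariant measure $m_Z$. Define $F : Z \to L^2(Z \times H)$ by $F(t) = \big[(z,x) \mapsto f_0(z+t, x+\omega(t,z))\big]$; continuity of $t \mapsto \omega(t,\cdot)+M$ as a map $Z \to \M(Z,H/M)$ together with the fact that $f_0$ factors through $H/M$ ensures that $F$ is a continuous $L^2$-valued function. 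Observe that $F(\alpha_g)$ evaluated at $(z,x)$ is $f_0(z+\alpha_g, x+\omega(\alpha_g,z))$, and since $\omega(\alpha_g,z) \equiv \sigma_g(z) \pmod M$ and $f_0$ is $M$-invariant in the second slot, this equals $f_0(z+\alpha_g, x+\sigma_g(z)) = (T^\sigma_g f_0)(z,x)$ after a change of variables in $x$ (or, more carefully, the functions $g \mapsto T^\sigma_g f_0$ and $g \mapsto F(\alpha_g)$ agree; one must be slightly careful that $F(\alpha_g)(z,x) = f_0(z+\alpha_g,x+\sigma_g(z))$ versus $(T^\sigma_g f_0)(z,x) = f_0(z+\alpha_g, x+\sigma_g(z))$ — these coincide). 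Then Lemma \ref{lem: Hil-valued ET} gives
\begin{equation*}
	\UClim_{g \in \Gamma}{F(\alpha_g)} = \int_Z F(t)\,dt
\end{equation*}
in $L^2(Z \times H)$, provided the image $\{\alpha_g : g \in \Gamma\}$ is dense in $Z$ (which it is, by definition of a Kronecker system) so that the orbit closure of any point is all of $Z$ and unique ergodicity applies along the orbit $t \mapsto t + \alpha_g$.

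Unwinding the right-hand side: $\int_Z F(t)\,dt$ evaluated at $(z,x)$ is $\int_Z f_0(z+t, x+\omega(t,z))\,dt$, and expanding $f_0(z+t,\cdot) = \int_M f(z+t, \cdot + m)\,dm$ recovers exactly $\int_{Z\times M} f(z+t, x+m+\omega(t,z))\,dt\,dm$, which is the claimed right-hand side of \eqref{eq: Mackey limit formula}. Combining with the vanishing for $f_1$ completes the proof. The main obstacle I anticipate is the bookkeeping around the $\omega$ map: one must verify that $F$ is genuinely continuous into $L^2(Z\times H)$ — this uses that $f_0$ depends on the second coordinate only through $H/M$ and that convergence in $\M(Z,H/M)$ of $\omega(t_n,\cdot)$ lifts to $L^2$-convergence of the composed functions $f_0(\cdot\,, \cdot + \omega(t_n,\cdot))$, which in turn relies on the characterization of convergence in $\M(Z,H/M)$ via characters of $H/M$ and an approximation of $f_0$ by finite character sums — plus checking that $F(\alpha_g)$ really equals $T^\sigma_g f_0$ rather than merely being cohomologous to it. A secondary subtlety is justifying that unique ergodicity of $\bfZ$ can be invoked even though we only know $\{\alpha_g\}$ is dense (not that every orbit is dense as a $\Gamma$-action on $Z$), but this is exactly the content of the Kronecker system structure: $\Gamma$ acts by rotations with dense image, so $(Z, (z \mapsto z+\alpha_g)_{g\in\Gamma})$ is minimal and uniquely ergodic.
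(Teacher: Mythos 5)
Your proof is correct and follows essentially the same route as the paper: split $f$ according to the Mackey group, kill one piece with Proposition \ref{prop: Mackey ET}, and handle the other with the Hilbert-valued unique ergodicity lemma (Lemma \ref{lem: Hil-valued ET}). The paper organizes the splitting by first reducing to single Fourier modes $f = h \otimes \chi$ and then casing on $\chi \in M^\perp$ versus $\chi \notin M^\perp$, which makes the continuity of the auxiliary function $F$ an immediate consequence of Proposition \ref{prop: continuous extension mod Mackey}; your orthogonal decomposition $f = f_0 + f_1$ is equivalent (it packages all $M^\perp$-modes into $f_0$ at once) but leaves the continuity of $t \mapsto F(t)$ in $L^2(Z \times H)$ as a slightly more involved approximation step, as you correctly flag.
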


\begin{rem}
	Proposition \ref{prop: continuous extension mod Mackey} provides a function $\tilde{\omega} : Z \times Z \to H/M$
	corresponding to the cocycle $\sigma : \Gamma \times Z \to H$.
	This can be lifted to a measurable function $\omega : Z \times Z \to H$
	satisfying the conditions in the statement of Corollary \ref{cor: Kronecker limit formula}
	by the Kuratowski and Ryll-Nardewski measurable selection theorem (see \cite[Section 5.2]{srivastava}).
\end{rem}

\begin{proof}[Proof of Corollary \ref{cor: Kronecker limit formula}]
	By linearity, we may assume $f$ is of the form $f = h \otimes \chi$ with $\chi \in \hat{H}$.
	Then the right hand side of \eqref{eq: Mackey limit formula} is equal to
	\begin{equation} \label{eq: right hand side 1}
		\chi(x) \left( \int_Z{h(z + t) \chi(\omega(t,z))~dt} \right) \left( \int_M{\chi(m)~dm} \right).
	\end{equation}
	If $\chi \notin M^{\perp}$, then $\UClim_{g \in \Gamma}{T^{\sigma}_gf} = 0$ by Proposition \ref{prop: Mackey ET}.
	Moreover, \eqref{eq: right hand side 1} is clearly equal to zero.
	
	Suppose $\chi \in M^{\perp}$.
	Then \eqref{eq: right hand side 1} reduces to
	\begin{equation*}
		\chi(x) \left( \int_Z{h(z+t) \chi(\omega(t,z))~dt} \right).
	\end{equation*}
	The left hand side of \eqref{eq: Mackey limit formula} is equal to
	\begin{equation*}
		\chi(x) \cdot \UClim_{g \in \Gamma}{h(z + \alpha_g) \chi(\sigma_g(z))}.
	\end{equation*}
	Define $F : Z \times Z \to S^1$ by $F(t, z) := h(z + t) \chi(\omega(t,z))$.
	Then by Proposition \ref{prop: continuous extension mod Mackey},
	$t \mapsto F(t, \cdot)$ is a continuous map from $Z$ to $L^2(Z)$
	and $F(\alpha_g, z) = h(z + \alpha_g) \chi(\sigma_g(z))$.
	Since $(Z, \alpha)$ is uniquely ergodic, it follows by Lemma \ref{lem: Hil-valued ET} that
	\begin{equation*}
		\UClim_{g \in \Gamma}{F(\alpha_g, z)} = \int_Z{F(t, z)~dt}
	\end{equation*}
	in $L^2(Z)$.
	This completes the proof.
\end{proof}

%%%%%%%%%%%%%%%%%%%%%%%%%%%%%%%%%%%%%%%%%%%%%%%%%%%%%%%%
% ---- EXTENSION ---- %
%%%%%%%%%%%%%%%%%%%%%%%%%%%%%%%%%%%%%%%%%%%%%%%%%%%%%%%%

\section{Extensions} \label{sec: extensions}

\begin{defn}
	Let $\Phi$ be a family of endomorphisms of $\Gamma$.
	A group of characters $\Lambda \subseteq \hat{\Gamma}$ is
	\begin{itemize}
		\item	\emph{$\Phi$-complete} if $\lambda \circ \varphi \in \Lambda$
			for every $\lambda \in \Lambda$ and every $\varphi \in \Phi$.
		\item	\emph{$\Phi$-divisible} if for every $\lambda \in \Lambda$ and every $\varphi \in \Phi$,
			there exists $\lambda' \in \Lambda$ such that $\lambda' \circ \varphi = \lambda$.
	\end{itemize}
\end{defn}

The main result of this section is the following extension theorem:

\begin{thm} \label{thm: complete divisible extension}
	Let $\Phi$ be any countable family of endomorphisms of $\Gamma$,
	and let $\Psi$ be a countable family of injective endomorphisms of $\Gamma$.
	Then for any ergodic $\Gamma$-system $\bfX = \left( X, \mX, \mu, (T_g)_{g \in \Gamma} \right)$,
	there exists an ergodic extension $\tilde{\bfX}$ of $\bfX$ such that the discrete spectrum of $\tilde{\bfX}$
	is $\Phi$-complete and $\Psi$-divisible.
\end{thm}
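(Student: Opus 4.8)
The plan is to build $\tilde{\bfX}$ as a (possibly transfinite, but in fact countable) tower of extensions, at each stage enlarging the discrete spectrum by adjoining one new eigenvalue demanded by $\Phi$-completeness or $\Psi$-divisibility, and then taking an inverse limit. The basic move is: given an ergodic system $\bfY$ with Kronecker factor $\bfZ = (Z,\alpha)$ and discrete spectrum $\Lambda = \{\lambda \circ \alpha : \lambda \in \hat{Z}\}$, and given a target character $\gamma \in \hat{\Gamma}$ that we wish to add to the spectrum, we produce an ergodic extension of $\bfY$ whose spectrum is $\langle \Lambda, \gamma\rangle$. Concretely, one first adjoins $\gamma$ to the Kronecker system: let $\tilde{\bfZ}$ be the ergodic rotational system with discrete spectrum $\langle \Lambda, \gamma \rangle$ (such a system exists and is unique up to isomorphism since ergodic Kronecker systems are classified by their spectrum — this is recorded in the bullet on $\mZ^1$ in the excerpt). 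Then $\tilde{\bfZ}$ is an order $1$ extension of $\bfZ = \bfZ^1(\bfY)$, so by Theorem \ref{thm: HK extension} (with $k=1$) there is an ergodic extension $\tilde{\bfY}$ of $\bfY$ having $\tilde{\bfZ}$ as its Kronecker factor; in particular the discrete spectrum of $\tilde{\bfY}$ is exactly $\langle \Lambda, \gamma\rangle$.

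Next I set up the bookkeeping. Fix the original system $\bfX$ with Kronecker spectrum $\Lambda_0$. Enumerate all "demands": for $\Phi$-completeness, every pair $(\lambda, \varphi)$ with $\lambda$ in the current spectrum and $\varphi \in \Phi$ demands that $\lambda \circ \varphi$ lie in the spectrum; for $\Psi$-divisibility, every pair $(\lambda, \psi)$ with $\lambda$ in the current spectrum and $\psi \in \Psi$ demands a character $\lambda'$ with $\lambda' \circ \psi = \lambda$. Here I must check that such a $\lambda'$ exists in $\hat{\Gamma}$ to be adjoined at all: since $\psi$ is injective, $\psi(\Gamma)$ is a subgroup of $\Gamma$ and $\lambda$ restricted to $\psi(\Gamma)$, transported via the isomorphism $\psi : \Gamma \to \psi(\Gamma)$, gives a character of $\Gamma$; this is the required $\lambda'$ (injectivity of $\psi$ is precisely what makes $\lambda \mapsto \lambda \circ \psi$ surjective onto $\hat{\Gamma}$ — this uses that $\hat{\Gamma}$ is divisible as a Pontryagin dual of a subgroup inclusion, or more elementarily that characters extend, and here no extension is even needed since we are pulling back along a bijection $\Gamma \to \psi(\Gamma)$). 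Since $\Phi$, $\Psi$, and each spectrum encountered are countable, the set of all demands generated throughout the process is countable; interleave them into a single sequence $(d_n)_{n \in \N}$ so that each demand that ever arises is eventually served. Starting from $\bfX_0 = \bfX$, at stage $n$ apply the basic move above to $\bfX_n$ with the character called for by $d_n$ (if it is already in the spectrum, set $\bfX_{n+1} = \bfX_n$), obtaining an ergodic extension $\bfX_{n+1}$ of $\bfX_n$ whose spectrum is the previous spectrum together with that one character.

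Finally, let $\tilde{\bfX}$ be the inverse limit of the tower $\bfX_0 \leftarrow \bfX_1 \leftarrow \bfX_2 \leftarrow \cdots$; an inverse limit of ergodic systems along factor maps is ergodic, and $\tilde{\bfX}$ extends $\bfX$. Its discrete spectrum is $\Lambda_\infty := \bigcup_n \Lambda_n$, the increasing union of the spectra $\Lambda_n$ of the $\bfX_n$ (every eigenfunction of the inverse limit is measurable with respect to some finite stage, since eigenfunctions generate the Kronecker factor and the Kronecker factors form an increasing sequence with dense union). It remains to verify that $\Lambda_\infty$ is $\Phi$-complete and $\Psi$-divisible: if $\lambda \in \Lambda_\infty$ then $\lambda \in \Lambda_n$ for some $n$, so for each $\varphi \in \Phi$ the demand "$\lambda \circ \varphi \in$ spectrum" is among the $d_m$'s and was served at some stage $m \ge n$, whence $\lambda \circ \varphi \in \Lambda_{m+1} \subseteq \Lambda_\infty$; the argument for $\Psi$-divisibility is identical, using the $\lambda'$ constructed above.

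The main obstacle — and the only step requiring genuine care rather than bookkeeping — is the basic move: passing from an extension of the Kronecker factor to an extension of the whole system while controlling the new Kronecker factor. This is exactly what Theorem \ref{thm: HK extension} supplies (the relatively independent joining $\bfX \times_{\bfZ^1} \tilde{\bfZ}^1$ together with the verification that its order-$1$ Host--Kra factor is $\tilde{\bfZ}^1$), so modulo that theorem the proof is a routine iteration-and-limit argument. A secondary point to be careful about is the existence and uniqueness of the enlarged ergodic Kronecker system with a prescribed larger spectrum, which follows from the standard classification of ergodic rotational systems by their groups of eigenvalues; one should note that an arbitrary subgroup of $\hat{\Gamma}$ is the spectrum of such a system, so adjoining $\gamma$ to $\Lambda$ is always legitimate.
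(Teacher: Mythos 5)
Your overall strategy is sound and the theorem is true by your argument, but you take a slightly different bookkeeping route than the paper, and there is one concrete error worth flagging.

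The paper does not build a tower of dynamical systems. Instead it first works entirely inside $\hat{\Gamma}$: starting from the spectrum $\Lambda_0$ of $\bfX$, it inductively enlarges to a countable chain $\Lambda_0 \subseteq \Lambda_1 \subseteq \cdots$ (alternating a $\Phi$-completeness step with a $\Psi$-divisibility step, with an explicit choice-function subinduction to keep the $\Psi$-divisibility step countable), sets $\Lambda_\infty = \bigcup_j \Lambda_j$, verifies abstractly that $\Lambda_\infty$ is $\Phi$-complete and $\Psi$-divisible, and then applies Theorem \ref{thm: extension} \emph{once} to realize $\Lambda_\infty$ as the discrete spectrum of a single ergodic extension. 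Your version builds the corresponding tower of actual systems $\bfX_0 \leftarrow \bfX_1 \leftarrow \cdots$ and passes to an inverse limit. That works, but it forces you to justify two extra facts (that an inverse limit of ergodic systems along factor maps is ergodic, and that the Kronecker factor and discrete spectrum of the inverse limit is the increasing union of those of the stages — the latter via a martingale/conditional-expectation argument on eigenfunctions), both of which you assert somewhat quickly. The paper's route sidesteps both by doing all the iteration purely at the level of countable subgroups of $\hat{\Gamma}$ and invoking the extension theorem a single time at the end; this is the cleaner choice, though yours is not wrong.

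The one genuine error is in your parenthetical justification of the $\Psi$-divisibility step. You define $\lambda'_0$ on $\psi(\Gamma)$ by $\lambda'_0(\psi(g)) = \lambda(g)$ and then claim that ``no extension is even needed since we are pulling back along a bijection $\Gamma \to \psi(\Gamma)$.'' This is incorrect: pulling back $\lambda$ along $\psi^{-1}$ produces a character of the subgroup $\psi(\Gamma)$, not of $\Gamma$. When $\psi(\Gamma) \subsetneq \Gamma$ (which is allowed — $\psi$ is only assumed injective, not surjective), you do need to extend $\lambda'_0$ from $\psi(\Gamma)$ to all of $\Gamma$, and this is exactly where the Pontryagin-duality extension theorem (\cite[Theorem 2.1.4]{rudin} in the paper) is invoked. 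Your earlier remark that ``characters extend'' is the correct tool; the later claim that no extension is needed contradicts it and should be deleted.
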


We will derive Theorem \ref{thm: complete divisible extension} as a consequence of the following general result:

\begin{thm} \label{thm: extension}
	Let $\bfX = \left( X, \mX, \mu, (T_g)_{g \in \Gamma} \right)$ be an ergodic $\Gamma$-system,
	and let $\Lambda$ be the discrete spectrum of $\bfX$.
	For any countable set $C \subseteq \hat{\Gamma}$, there is an ergodic extension $\tilde{\bfX}$ of $\bfX$
	such that the discrete spectrum $\tilde{\Lambda}$ of $\tilde{\bfX}$ is equal to the group generated by $\Lambda$ and $C$.
\end{thm}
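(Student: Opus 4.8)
The plan is to obtain the required extension in one move from Theorem \ref{thm: HK extension}, after first building a suitable extension of the Kronecker factor. Since $\bfX$ lives on a standard probability space, $L^2(\mu)$ is separable, so $\Lambda$ is countable; together with the countability of $C$, this shows that $L := \langle \Lambda \cup C \rangle \subseteq \hat{\Gamma}$, the subgroup of $\hat{\Gamma}$ generated by $\Lambda$ and $C$, is a countable discrete abelian group containing $\Lambda$. Let $W$ be the (compact, metrizable) Pontryagin dual of $L$, and define $\beta : \Gamma \to W$ by letting $\beta(g)$ be the character $\lambda \mapsto \lambda(g)$ of $L$.

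First I would check that $(W, \beta)$ is an ergodic Kronecker system with discrete spectrum exactly $L$. The image $\beta(\Gamma)$ is dense in $W$: a character of $W$, i.e.\ an element $\lambda \in L$, annihilates $\beta(\Gamma)$ only if $\lambda(g) = 1$ for all $g \in \Gamma$, so only if $\lambda$ is trivial. And under the canonical pairing $\lambda \circ \beta$ is just $\lambda$ viewed as an element of $\hat{\Gamma}$, so the discrete spectrum $\{\lambda \circ \beta : \lambda \in \hat{W}\}$ of $(W, \beta)$ is precisely $L$. Next, dualizing the inclusion $\Lambda \hookrightarrow L$ yields a continuous surjective homomorphism $p : W \to \hat{\Lambda}$; identifying the Kronecker factor $\bfZ^1$ of $\bfX$ with the rotational system on $Z = \hat{\Lambda}$ whose rotation homomorphism $\alpha : \Gamma \to Z$ is $g \mapsto (\lambda \mapsto \lambda(g))$, one verifies directly that $p \circ \beta = \alpha$, so $p$ is a factor map exhibiting $(W, \beta)$ as an extension of $\bfZ^1$. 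A rotational system is its own Kronecker factor, hence a system of order $1$ (see Proposition \ref{prop: order k factors}), so $(W, \beta)$ is an ergodic order-$1$ extension of $\bfZ^1$.

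Now I apply Theorem \ref{thm: HK extension} with $k = 1$: there is an ergodic extension $\tilde{\bfX}$ of $\bfX$ whose Host--Kra factor of order $1$ — that is, its Kronecker factor — is $(W, \beta)$. The discrete spectrum of any ergodic system coincides with that of its Kronecker factor, so the discrete spectrum of $\tilde{\bfX}$ equals the discrete spectrum of $(W, \beta)$, namely $L = \langle \Lambda \cup C \rangle$, as desired.

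The step needing the most care is the verification that $(W, \beta)$ is genuinely an extension of the Kronecker factor of $\bfX$, and not merely some abstract Kronecker system with the prescribed group of eigenvalues: one must fix the identification of $\bfZ^1$ with the dual system on $\hat{\Lambda}$ and check that the rotations are intertwined by $p$. An alternative, more hands-on route would adjoin generators of $C$ one at a time by forming explicit group extensions of $\bfX$ by $S^1$ (when the new character has infinite order modulo the current spectrum) or by finite cyclic groups (the finite-order case), and then iterating with an inverse limit; the argument above sidesteps that bookkeeping by packaging everything into the single Kronecker extension $(W, \beta)$. Once the identification above is in place, the rest is formal, resting on Theorem \ref{thm: HK extension} and the equality of the eigenvalue groups of an ergodic system and its Kronecker factor.
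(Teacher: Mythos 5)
Your proof is correct and takes essentially the same approach as the paper: both reduce to Theorem \ref{thm: HK extension} with $k=1$ applied to an ergodic Kronecker extension of $\bfZ^1$ with discrete spectrum $\langle \Lambda, C\rangle$. The paper simply asserts the existence of such a Kronecker extension, while you carry out the dual-group construction of $(W,\beta)$ and verify the factor map $p \colon W \to \hat{\Lambda}$ explicitly, which is a worthwhile but routine elaboration of the same idea.
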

\begin{proof}
	This is a special case $(k=1)$ of Theorem \ref{thm: HK extension}.
	Indeed, if $\tilde{\bfZ}$ is an ergodic Kronecker system with discrete spectrum
	$\tilde{\Lambda} = \left\langle \Lambda, C \right\rangle$, then $\tilde{\bfZ}$ is an order 1 extension of $\bfZ$,
	and so there exists by Theorem \ref{thm: HK extension} an ergodic extension $\tilde{\bfX}$ of $\bfX$
	whose Kronecker factor is isomorphic to $\tilde{\bfZ}$.
\end{proof}

\begin{rem}
	(1) A similar statement to Theorem \ref{thm: extension} is proved in \cite[Theorem 4.3]{abs}.
	However, in \cite[Theorem 4.3]{abs}, only one inclusion is established
	(namely, that $\tilde{\Lambda}$ contains $\Lambda$ and $C$).
	It is nevertheless true that the construction in the proof of \cite[Theorem 4.3]{abs}
	produces a system with the correct discrete spectrum, as can be seen with some additional work.
	(In fact, though differently-phrased, the construction is equivalent to taking the relatively independent joining
	of $\bfX$ with the appropriate ergodic Kronecker system $\tilde{\bfZ}$).
	
	(2) The extension $\tilde{\bfX}$ produced via Theorem \ref{thm: extension} is not unique.
	For instance, any weakly mixing extension of $\tilde{\bfX}$ will again satisfy the conclusion of the theorem.
	The minimal such extension (i.e., the one appearing as a factor of any such extension)
	is precisely the relatively independent joining $\bfX \times_{\bfZ} \tilde{\bfZ}$.
\end{rem}

Now we can prove Theorem \ref{thm: complete divisible extension}.

\begin{proof}[Proof of Theorem \ref{thm: complete divisible extension}]
	Let $\Lambda$ be the discrete spectrum of $\bfX$.
	We extend $\Lambda$ in stages, alternating between $\Phi$-completeness and $\Psi$-divisibility.
	
	First, we make a couple of convenient reductions to make the notation less cumbersome.
	Replacing $\Phi$ by $\Phi \cup \Psi$, we may assume that $\Psi \subseteq \Phi$.
	Let $\tilde{\Phi}$ be the semigroup
	\begin{align*}
		\tilde{\Phi} & = \left\{ \varphi_1 \circ \dots \circ \varphi_k : k \ge 0, \varphi_1, \dots, \varphi_k \in \Phi \right\},
	\intertext{and let $\tilde{\Psi}$ be the semigroup}
		\tilde{\Psi} & = \left\{ \psi_1 \circ \dots \circ \psi_k : k \ge 0, \psi_1, \dots, \psi_k \in \Psi \right\}.
	\end{align*}
	Since $\Phi$ and $\Psi$ are countable, $\tilde{\Phi}$ and $\tilde{\Psi}$ are also countable.
	Moreover, for $\tilde{\psi} = \psi_1 \circ \dots \circ \psi_k \in \tilde{\Psi}$, we have that $\tilde{\psi}$ is injective,
	since it is a composition of injective maps.
	Thus, replacing $\Phi$ and $\Psi$ with $\tilde{\Phi}$ and $\tilde{\Psi}$,
	we may assume without loss of generality that $\Phi$ and $\Psi$ contain the identity map
	and are closed under composition.
	
	Now we set up the induction process.
	Let $\Lambda_0 := \Lambda$.
	Suppose we have defined $\Lambda_0 \subseteq \dots \subseteq \Lambda_{2j}$ for some $j \ge 0$.
	Let
	\begin{equation*}
		\Lambda_{2j+1} := \left\langle \lambda \circ \varphi : \lambda \in \Lambda_{2j}, \varphi \in \Phi \right\rangle.
	\end{equation*}
	By the induction hypothesis, $\Lambda_{2j}$ is a countable group, so $\Lambda_{2j+1}$ is also a countable group.
	Moreover, since $\Phi$ is a semigroup, $\Lambda_{2j+1}$ is $\Phi$-complete.
	
	We now perform subinduction to define $\Lambda_{2j+2}$.
	Put $S_0 := \Lambda_{2j+1}$.
	Suppose we have defined $S_k$ for some $k \ge 0$.
	For $\lambda \in S_k$ and $\psi \in \Psi$, there exists $\lambda' \in \hat{\Gamma}$
	such that $\lambda' \circ \psi = \lambda$.
	To see this, first define $\lambda'_0 : \psi(\Gamma) \to S^1$ by $\lambda'_0(\psi(g)) = \lambda(g)$.
	This is well-defined since $\psi$ is injective.
	Then by \cite[Theorem 2.1.4]{rudin}, $\lambda'_0$ extends to a character $\lambda' \in \hat{\Gamma}$,
	and we have $\lambda' \circ \psi = \lambda$.
	Define a choice function $\gamma_k : S_k \times \Psi \to \hat{\Gamma}$
	so that $\gamma_k(\lambda, \psi) \circ \psi = \lambda$ for every $\lambda \in S_k$ and $\psi \in \Psi$.
	Then let
	\begin{equation*}
		S_{k+1} := \left\{ \gamma_k(\lambda, \psi) : \lambda \in S_k, \psi \in \Psi \right\}.
	\end{equation*}
	By induction, the set $S_{k+1}$ is countable.
	Therefore, $S = \bigcup_{k \ge 0}{S_k}$ is countable and hence generates a countable group
	$\Lambda_{2j+2} := \left\langle S \right\rangle$.
	
	We claim that $\Lambda_{2j+2}$ is $\Psi$-divisible.
	Let $\lambda \in \Lambda_{2j+2}$, and let $\psi \in \Psi$.
	We may write $\lambda = \prod_{i=1}^r{\gamma_{k_i}^{\eps_i}(\lambda_i, \psi_i)}$
	with $\lambda_i \in S_{k_i}$, $\psi_i \in \Psi$, and $\eps_i \in \{-1,1\}$.
	Let $\lambda'_i = \gamma_{k_i+1} \left( \gamma_{k_i}(\lambda_i, \psi_i), \psi \right) \in S$,
	and let $\lambda' = \prod_{i=1}^r{(\lambda'_i)^{\eps_i}} \in \Lambda_{2j+2}$.
	Then
	\begin{equation*}
		\lambda' \circ \psi = \prod_{i=1}^r{(\lambda'_i \circ \psi)^{\eps_i}} = \prod_{i=1}^r{\left( \gamma_{k_i}(\lambda_i, \psi_i) \right)^{\eps_i}} = \lambda.
	\end{equation*}
	Thus, $\Lambda_{2j+2}$ is $\Psi$-divisible as claimed.
	
	By induction, we have constructed an infinite sequence of countable groups
	$\Lambda_0 \subseteq \Lambda_1 \subseteq \Lambda_2 \subseteq \dots$ such that
	$\Lambda_{2j+1}$ is $\Phi$-complete and $\Lambda_{2j+2}$ is $\Psi$-divisible for $j \ge 0$.
	Let $\Lambda_{\infty} := \bigcup_{j=0}^{\infty}{\Lambda_j}$.
	Then $\Lambda_{\infty}$ is a countable group that is $\Phi$-complete and $\Psi$-divisible.
	
	Finally, we apply Theorem \ref{thm: extension} to obtain an ergodic extension $\tilde{\bfX}$ of $\bfX$
	such that the discrete spectrum $\tilde{\Lambda}$ of $\tilde{\bfX}$ is equal to $\Lambda_{\infty}$.
\end{proof}

\begin{rem}
	The subinduction and use of the choice functions $\gamma_k$ in the construction of the group $\Lambda_{2j+2}$
	is solely used to ensure that $\Lambda_{2j+2}$ is countable.
	A $\Psi$-divisible group can be defined more directly, namely
	\begin{equation*}
		D_{2j+2} := \left\langle \lambda \in \hat{\Gamma} : \lambda \circ \psi \in \Lambda_{2j+1}~\text{for some}~\psi \in \Psi \right\rangle.
	\end{equation*}
	In general, $D_{2j+2}$ is uncountable.
	However, if $\psi(\Gamma)$ is a finite index subgroup of $\Gamma$ for each $\psi \in \Psi$,
	then the set $\{\lambda \in \hat{\Gamma} : \lambda \circ \psi = \lambda_0\}$
	has cardinality $[\Gamma : \psi(\Gamma)] < \infty$ for each $\lambda_0 \in \Lambda_{2j+1}$ and $\psi \in \Psi$.
	Thus, in this case, $D_{2j+2}$ is countable, so one may take $\Lambda_{2j+2} = D_{2j+2}$
	rather than using the more complicated construction
	appearing in the proof of Theorem \ref{thm: complete divisible extension}.
	For the group $\Gamma = \Z^d$, an endomorphism is
	injective if and only if it has finite index image if and only if the corresponding matrix is nonsingular.
	Therefore, the simpler construction $\Lambda_{2j+2} = D_{2j+2}$
	can always be used when dealing with the group $\Gamma = \Z^d$.
\end{rem}

%%%%%%%%%%%%%%%%%%%%%%%%%%%%%%%%%%%%%%%%%%%%%%%%%%%%%%%%
% ---- MACKEY ---- %
%%%%%%%%%%%%%%%%%%%%%%%%%%%%%%%%%%%%%%%%%%%%%%%%%%%%%%%%

\section{Mackey group associated with $\{\varphi, \psi\}$} \label{sec: Mackey}

Let $\varphi, \psi \in \End(\Gamma)$ such that $(\psi - \varphi)(\Gamma)$ has finite index in $\Gamma$,
and suppose $\theta_1, \theta_2 \in \End(\Gamma)$ are such that $\theta_1 \circ \varphi + \theta_2 \circ \psi$ is injective.

Let $\bfX$ be an ergodic quasi-affine $\Gamma$-system, and write $\bfX = \bfZ \times_{\sigma} H$.
Assume that the discrete spectrum of $\bfX$ is $\{\varphi, \psi, \theta_1, \theta_2\}$-complete
and $(\theta_1 \circ \varphi + \theta_2 \circ \psi)$-divisible.
In this section, we consider a variant of the notion of Mackey groups as discussed in Section \ref{sec: Mackey prelim} that is tailored to analyzing ergodic averages of the form
\begin{equation*}
	\UClim_{g \in \Gamma} T_{\varphi(g)} f_1 \cdot T_{\psi(g)} f_2
\end{equation*}
for $f_1, f_2 \in L^{\infty}(Z \times H)$.

Since the discrete spectrum $\Lambda$ of $\bfX$ is $\{\varphi, \psi, \theta_1, \theta_2\}$-complete,
we have induced continuous endomorphisms of $Z = \hat{\Lambda}$,
which we denote by $\hat{\varphi}$, $\hat{\psi}$, $\hat{\theta}_1$, and $\hat{\theta}_2$.
To see this, view $Z$ as the dual group, expressed additively as the group of homomorphisms $z : \Lambda \to \T = \R/\Z$.
For $f \in \{\varphi, \psi, \theta_1, \theta_2\}$, the map $\hat{f} : Z \to Z$ is then given by
\begin{equation*}
	\hat{f}(z) : \lambda \mapsto z \left( \lambda \circ f \right).
\end{equation*}

Since $\Lambda$ is $(\theta_1 \circ \varphi + \theta_2 \circ \psi)$-divisible,
we claim that $\hat{\theta}_1 \circ \hat{\varphi} + \hat{\theta}_2 \circ \hat{\psi}$ is injective.
Indeed, suppose $(\hat{\theta}_1 \circ \hat{\varphi} + \hat{\theta}_2 \circ \hat{\psi})(z) = 0$.
Then for every $\lambda \in \Lambda$,
\begin{equation*}
	z \left( \lambda \circ (\theta_1 \circ \varphi + \theta_2 \circ \psi) \right) = 0.
\end{equation*}
But for any $\lambda \in \Lambda$, there exists $\lambda' \in \Lambda$
with $\lambda' \circ (\theta_1 \circ \varphi + \theta_2 \circ \psi) = \lambda$, so
\begin{equation*}
	z(\lambda) = z \left( \lambda' \circ (\theta_1 \circ \varphi + \theta_2 \circ \psi) \right) = 0.
\end{equation*}
That is, $z = 0$. \\

Let
\begin{equation*}
	W = \left\{ (z + \hat{\varphi}(t), z + \hat{\psi}(t)) : z, t \in Z \right\},
\end{equation*}
and let $\tilde{\alpha} : \Gamma \to W$ be the homomorphism
\begin{equation*}
	\tilde{\alpha}_g = \left( \alpha_{\varphi(g)}, \alpha_{\psi(g)} \right).
\end{equation*}
Let $\tilde{\sigma} : \Gamma \times W \to H^2$ be the cocycle
\begin{equation*}
	\tilde{\sigma}_g(w) = \left( \sigma_{\varphi(g)}(w_1), \sigma_{\psi(g)}(w_2) \right).
\end{equation*}

Now we define the Mackey group to be the closed subgroup $M \le H^2$ with annihilator
\begin{equation*}
	M^{\perp} = \left\{ \tilde{\chi} \in \hat{H^2} : \tilde{\chi} \circ \tilde{\sigma}~\text{is a coboundary over}~(W, \tilde{\alpha}) \right\}.
\end{equation*}

The Kronecker system $\bfW = (W, \tilde{\alpha})$ is not necessarily ergodic.
However, its ergodic decomposition is easy to describe and interacts well with the Mackey group $M$.
Namely, we may express $W$ as the union of the subsets
\begin{equation*}
	W_z = \left\{ (z + \hat{\varphi}(t), z + \hat{\psi}(t)) : t \in Z \right\} = (z,z) + W_0,
\end{equation*}
each supporting a Haar measure $m_z$.
The system $(W_0, \tilde{\alpha})$ is uniquely ergodic, and each of the systems $(W_z, \tilde{\alpha})$ for $z \in Z$
is an isomorphic copy.
It is easily verified that the Haar measure on $W$ decomposes as $m_W = \int_Z{m_z~dz}$.
An important property of this ergodic decomposition is that,
letting $M_z$ denote the Mackey group corresponding to the ergodic component $(W_z, \tilde{\alpha})$,
one has $M_z = M$ for a.e. $z \in Z$ (see \cite[Proposition 7.9]{abb}).

We now seek to describe the structure of the Mackey group $M$.
A classical fact in ergodic theory is that, given two measure-preserving systems $\bfX_1$ and $\bfX_2$,
invariant functions for the product system $\bfX_1 \times \bfX_2$ are formed from functions of the form $f_1 \otimes f_2$,
where $f_1$ is an eigenfunction of $\bfX_1$, $f_2$ is an eigenfunction of $\bfX_2$, and the corresponding eigenvalues
are conjugates of one another.
The following result describes the Mackey group $M$ in an analogous manner:

\begin{thm} \label{thm: Mackey gluing}
	Let $M$ be the Mackey group as defined above.
	Then
	\begin{equation*}
		M^{\perp} = \left\{ \chi_1 \otimes \chi_2 \in \hat{H^2} : \exists \gamma \in \hat{\Gamma}, \chi_1(\sigma_{\varphi(g)}(w_1)) \sim \gamma(g)~\text{and}~\chi_2(\sigma_{\psi(g)}(w_2)) \sim \overline{\gamma}(g)~\text{over}~(W, \tilde{\alpha}) \right\}.
	\end{equation*}
\end{thm}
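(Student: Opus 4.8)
The inclusion $\supseteq$ is immediate: if $\chi_1(\sigma_{\varphi(g)}(w_1)) \sim \gamma(g)$ and $\chi_2(\sigma_{\psi(g)}(w_2)) \sim \overline{\gamma}(g)$ over $(W,\tilde{\alpha})$, then multiplying the two coboundary representations shows that $(\chi_1 \otimes \chi_2) \circ \tilde{\sigma}$ is cohomologous to the constant cocycle $\gamma(g)\overline{\gamma}(g) = 1$, hence is a coboundary over $(W,\tilde{\alpha})$, so $\chi_1 \otimes \chi_2 \in M^{\perp}$. For the reverse inclusion, recall that every character of $H^2$ has the form $\chi_1 \otimes \chi_2$, and that $\chi_1 \otimes \chi_2 \in M^{\perp}$ says precisely that $g \mapsto \chi_1(\sigma_{\varphi(g)}(w_1))\,\chi_2(\sigma_{\psi(g)}(w_2))$ is a coboundary over $(W,\tilde{\alpha})$; in particular $\chi_1 \circ \sigma_{\varphi(\cdot)}$ and the complex conjugate of $\chi_2 \circ \sigma_{\psi(\cdot)}$ are cohomologous over $(W,\tilde{\alpha})$, so it is enough to produce a single $\gamma \in \hat{\Gamma}$ with $\chi_1(\sigma_{\varphi(g)}(w_1)) \sim \gamma(g)$; the relation $\chi_2(\sigma_{\psi(g)}(w_2)) \sim \overline{\gamma}(g)$ is then automatic.

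The plan is to apply the sequential coboundary criterion of Lemma~\ref{lem: coboundary} (which does not require ergodicity of $(W,\tilde{\alpha})$). I first record two structural facts. Since $(\theta_1 \circ \varphi + \theta_2 \circ \psi)$-divisibility makes $\hat{\theta}_1 \circ \hat{\varphi} + \hat{\theta}_2 \circ \hat{\psi}$ injective (as shown above), we have $\ker \hat{\varphi} \cap \ker \hat{\psi} = \{0\}$, so $t \mapsto (\hat{\varphi}(t),\hat{\psi}(t))$ is a homeomorphism of $Z$ onto $W_0$; thus $(W_0,\tilde{\alpha}) \cong \bfZ = (Z,\alpha)$ and, crucially, $\tilde{\alpha}_{g_n} \to 0$ in $W$ \emph{if and only if} $\alpha_{g_n} \to 0$ in $Z$. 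Now assume $\chi_1 \otimes \chi_2 \in M^{\perp}$. By Lemma~\ref{lem: coboundary}, for every sequence $(g_n)$ with $\alpha_{g_n} \to 0$ we have $\chi_1(\sigma_{\varphi(g_n)}(w_1))\,\chi_2(\sigma_{\psi(g_n)}(w_2)) \to 1$ in $L^2(W)$; note in particular $\alpha_{\varphi(g_n)} \to 0$ and $\alpha_{\psi(g_n)} \to 0$ in $Z$. Because $\bfX = \bfZ \times_{\sigma} H$ is quasi-affine, $\chi_1 \circ \sigma$ and $\chi_2 \circ \sigma$ are quasi-affine cocycles over $\bfZ$, and Lemma~\ref{lem: quasi-affine}(iii) applied to $\chi_1 \circ \sigma$ along $(\varphi(g_n))$ and to $\chi_2 \circ \sigma$ along $(\psi(g_n))$ furnishes constants $c_n, d_n \in S^1$ and ``drift'' characters $\lambda_n, \kappa_n \in \hat{Z}$ with $c_n \lambda_n(z)\,\chi_1(\sigma_{\varphi(g_n)}(z)) \to 1$ and $d_n \kappa_n(z)\,\chi_2(\sigma_{\psi(g_n)}(z)) \to 1$ in $L^2(Z)$.

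Pulling both approximations back to $W$ along the coordinate projections $\pi_1, \pi_2 \colon W \to Z$ (each of which pushes the Haar measure of $W$ to that of $Z$), using $\{\varphi,\psi\}$-completeness of $\Lambda$ to regard $\lambda_n \circ \hat{\varphi}$ and $\kappa_n \circ \hat{\psi}$ again as characters of $Z$, and combining with the coboundary hypothesis, one finds after separating the ``first'' and ``second'' circle directions that, for all large $n$,
\[
	c_n d_n \to 1, \qquad \kappa_n = \overline{\lambda_n}, \qquad \lambda_n \circ (\psi - \varphi) = 1.
\]
The last identity forces $\lambda_n$ into the group $\{\lambda \in \Lambda : \lambda \circ (\psi - \varphi) = 1\}$, which is \emph{finite} because $(\psi - \varphi)(\Gamma)$ has finite index in $\Gamma$. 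The remaining — and principal — step is to improve ``$\lambda_n$ lies in a fixed finite group'' to ``$\lambda_n = 1$ for $n$ large'': granting this, $c_n \chi_1(\sigma_{\varphi(g_n)}(w_1)) \to 1$ in $L^2(W)$ for every $(g_n)$ with $\tilde{\alpha}_{g_n} \to 0$, which by Lemma~\ref{lem: cohomologous to character} (used over the ergodic component $W_0 \cong \bfZ$) exhibits $\chi_1 \circ \sigma_{\varphi(\cdot)}$ as cohomologous to some character $\gamma$ over $(W,\tilde{\alpha})$, finishing the proof.

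To carry out this last step I would differentiate the coboundary relation for $(\chi_1 \otimes \chi_2) \circ \tilde{\sigma}$ in the $W$-translation variable and invoke quasi-affineness: letting $\beta_1,\beta_2 \colon Z \to \hat{\Gamma}$ denote the (Borel, homomorphic) derivative-character maps of $\chi_1 \circ \sigma$ and $\chi_2 \circ \sigma$, one checks that the drift along $(\varphi(g_n))$ is $\lambda_n(s) = \overline{(\beta_1(s) \circ \varphi)(g_n)}$ and that $(\beta_1(r_1) \circ \varphi)(\beta_2(r_2) \circ \psi)$ is a coboundary over $(W,\tilde{\alpha})$ — hence lies in $\Lambda$ — for every $(r_1,r_2) \in W$. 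Since $\tilde{\alpha}_{g_n} \to 0$ actually forces $\alpha_{g_n} \to 0$ — this is exactly where condition (iii) is indispensable — any character of the form $\mu \circ \varphi$ with $\mu \in \Lambda$ satisfies $(\mu \circ \varphi)(g_n) \to 1$; feeding this together with the identities above into the finiteness of the ambient group of admissible drifts should pin $\lambda_n(s)$ to $1$ for every $s$, i.e.\ $\lambda_n = 1$ eventually. I expect this reconciliation of the derivative information carried by $\beta_1,\beta_2$ with the drift characters — squeezing out triviality using only the finite-index hypothesis on $\psi - \varphi$ together with the divisibility and injectivity encoded in (i)--(iv) — to be the most delicate point of the argument; everything preceding it is a fairly direct combination of Lemmas~\ref{lem: coboundary}, \ref{lem: cohomologous to character}, and~\ref{lem: quasi-affine} with the structural description of $W$ recorded in Section~\ref{sec: Mackey}.
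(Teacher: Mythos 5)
The first half of your argument is sound and, in fact, slightly streamlines the paper's computation. You correctly reduce to producing a single $\gamma$ with $\chi_1\circ\sigma_{\varphi(\cdot)}\sim\gamma$, correctly derive (via Lemmas~\ref{lem: coboundary} and~\ref{lem: quasi-affine}) the quasi-affine approximants $c_n\lambda_n\chi_1(\sigma_{\varphi(g_n)})\to 1$ and $d_n\kappa_n\chi_2(\sigma_{\psi(g_n)})\to 1$, and by pulling both back to $W$ and dividing into the coboundary hypothesis you obtain $c_nd_n\,(\lambda_n\otimes\kappa_n)|_W\to 1$ in $L^2(W)$. Orthogonality of characters then forces $\lambda_n\otimes\kappa_n\in W^{\perp}$ eventually, which gives $\kappa_n=\overline{\lambda_n}$ and $\lambda_n\in\bigl((\hat\psi-\hat\varphi)(Z)\bigr)^{\perp}$ at once. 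This is cleaner than the paper's change-of-coordinates $\zeta_u$, which only yields $\lambda_{1,n},\lambda_{2,n}\in\bigl((\hat\psi-\hat\varphi)(Z)\bigr)^{\perp}$ separately; both routes land in the same place.

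The gap is in the final step. You want to upgrade ``$\lambda_n$ lies in a fixed finite group'' to ``$\lambda_n=1$ eventually'' and propose to do this by relating the drift to derivative characters $\beta_1,\beta_2$ of the quasi-affine cocycles. This is the wrong target: showing $\lambda_n=1$ is strictly stronger than what is needed, your proposed identity $\lambda_n(s)=\overline{(\beta_1(s)\circ\varphi)(g_n)}$ is not established and does not obviously hold (the drift produced by Lemma~\ref{lem: quasi-affine}(iii) is only determined up to a tail of the sequence and is not a priori computable from $\beta_1$), and you acknowledge the step is not carried out. The tool you are missing is Lemma~\ref{lem: cohomologous to character}(iii). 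Set $A=(\hat\psi-\hat\varphi)(Z)$, which has positive Haar measure precisely because $(\psi-\varphi)(\Gamma)$ has finite index. Take $A_1=\{w\in W:w_1\in A\}$. For $t=(t_1,t_2)\in A_1$ and any $(g_n)$ with $\tilde\alpha_{g_n}\to0$, the fact that $\lambda_n\in A^{\perp}$ and $t_1\in A$ gives $\lambda_n(w_1+t_1)=\lambda_n(w_1)$, so
\[
	\frac{\chi_1\bigl(\sigma_{\varphi(g_n)}(w_1+t_1)\bigr)}{\chi_1\bigl(\sigma_{\varphi(g_n)}(w_1)\bigr)}
	=\frac{c_n\lambda_n(w_1+t_1)\chi_1\bigl(\sigma_{\varphi(g_n)}(w_1+t_1)\bigr)}{c_n\lambda_n(w_1)\chi_1\bigl(\sigma_{\varphi(g_n)}(w_1)\bigr)}\to 1
\]
in $L^2(W)$. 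By Lemma~\ref{lem: coboundary} this ratio is a coboundary for every $t_1\in A$, and then Lemma~\ref{lem: cohomologous to character}(iii) immediately exhibits $\chi_1\circ\sigma_{\varphi(\cdot)}$ as cohomologous to a character over $(W,\tilde\alpha)$, with no need to trivialize $\lambda_n$. That positive-measure criterion is what carries the final step; without it, your outline does not close.
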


\begin{rem}
	Throughout this section, we treat $g \in \Gamma$ and $w = (w_1, w_2) \in W$ as variables and write expressions of the form $\rho(g,w_1,w_2) \sim \tau(g,w_1,w_2)$ as shorthand for the statement that the cocycles $(g, (w_1,w_2)) \mapsto \rho(g, w_1, w_2)$ and $(g, (w_1, w_2)) \mapsto \tau(g, w_1, w_2)$ are cohomologous.
	For example, the notation $\chi_1(\sigma_{\varphi(g)}(w_1)) \sim \gamma(g)$ means that there is a measurable function $F : W \to S^1$ such that
	\begin{equation*}
		\chi_1(\sigma_{\varphi(g)}(w_1)) = \gamma(g) \frac{F\left( w_1 + \alpha_{\varphi(g)}, w_2 + \alpha_{\psi(g)} \right)}{F(w_1, w_2)}
	\end{equation*}
	for every $g \in \Gamma$ and almost every $w = (w_1, w_2) \in W$.
\end{rem}

\begin{proof}
	Suppose $\chi_1, \chi_2 \in \hat{H}$ and $\gamma \in \hat{\Gamma}$ such that
	$\chi_1(\sigma_{\varphi(g)}(w_1)) \sim \gamma(g)$ and $\chi_2(\sigma_{\psi(g)}(w_2)) \sim \overline{\gamma}(g)$.
	Then
	\begin{equation*}
		\left( \chi_1 \otimes \chi_2 \right)(\tilde{\sigma}_g(w)) = \chi_1(\sigma_{\varphi(g)}(w_1)) \chi_2(\sigma_{\psi(g)}(w_2)) \sim \gamma(g) \overline{\gamma}(g) = 1
	\end{equation*}
	so $\chi_1 \otimes \chi_2 \in M^{\perp}$. \\
	
	Conversely, suppose $\chi_1 \otimes \chi_2 \in M^{\perp}$.
	Let $(g_n)_{n \in \N}$ be a sequence in $\Gamma$ such that $\tilde{\alpha}_{g_n} \to 0$ in $W$.
	By Lemma \ref{lem: coboundary},
	\begin{equation*}
		(\chi_1 \otimes \chi_2) \circ \tilde{\sigma}_{g_n} \to 1
	\end{equation*}
	in $L^2(W)$.
	That is,
	\begin{equation} \label{eq: Z x Z conv}
		\chi_1 \left( \sigma_{\varphi(g_n)} \left( z + \hat{\varphi}(t) \right) \right) \chi_2 \left( \sigma_{\psi(g_n)} \left( z + \hat{\psi}(t) \right) \right) \to 1
	\end{equation}
	in $L^2(Z \times Z)$.
	
	The cocycle $\sigma$ is quasi-affine, so by Lemma \ref{lem: quasi-affine},
	there are sequences $(c_{i,n})_{n \in \N}$ in $S^1$ and $(\lambda_{i,n})_{n \in \N}$ in $\hat{Z}$ for $i = 1, 2$ such that
	\begin{equation} \label{eq: QA convergence}
		c_{1,n} \lambda_{1,n}(z) \chi_1 \left( \sigma_{\varphi(g_n)}(z) \right) \to 1 \qquad \text{and} \qquad
		c_{2,n} \lambda_{2,n}(z) \chi_2 \left( \sigma_{\psi(g_n)}(z) \right) \to 1
	\end{equation}
	in $L^2(Z)$.
	We will combine \eqref{eq: Z x Z conv} and \eqref{eq: QA convergence} to show that
	$\chi_1(\sigma_{\varphi(g)}(z)) \sim \gamma(g)$ and $\chi_2(\sigma_{\psi(g)}(z)) \sim \overline{\gamma}(g)$
	for some $\gamma \in \hat{\Gamma}$.
	
	For convenience, let $\mu_n = \chi_1 \circ \sigma_{\varphi(g_n)}$ and $\nu_n = \chi_2 \circ \sigma_{\psi(g_n)}$.
	Now we perform a change of coordinates.
	Define $\eta : Z^2 \to Z^2$ by
	\begin{equation*}
		\eta(z,t) = \left( z + \hat{\varphi}(t), z + \hat{\psi}(t) \right)
	\end{equation*}
	and, for $u \in Z$, let $\zeta_u : Z^2 \to Z^2$ be the map
	\begin{equation*}
		\zeta_u(z,t) = \left( u + \hat{\psi}(z) - \hat{\varphi}(t), t - z \right).
	\end{equation*}
	Note that \eqref{eq: Z x Z conv} is equivalent to
	\begin{equation} \label{eq: Z^2 conv equiv}
		(\mu_n \otimes \nu_n) \circ \eta \to 1
	\end{equation}
	in $L^2 \left( Z^2 \right)$.
	
	We claim
	\begin{equation} \label{eq: conv claim}
		(\mu_n \otimes \nu_n) \circ \eta \circ \zeta_u \to 1
	\end{equation}
	in $L^2 \left( Z^2 \right)$.
	Let $\zeta := \zeta_0$.
	Fix $u \in Z$, and let $f_n(z,t) = \left( \left( \mu_n \otimes \nu_n \right) \circ \eta \right) (z+u,t)$.
	Then $\left( \mu_n \otimes \nu_n \right) \circ \eta \circ \zeta_u = f_n \circ \zeta$.
	We then want to show $f_n \circ \zeta \to 1$ in $L^2 \left( Z^2 \right)$.
	Since the Haar measure on $Z^2$ is invariant under shifting by $(u,0)$, we have $f_n \to 1$ in $L^2 \left( Z^2 \right)$
	by \eqref{eq: Z^2 conv equiv}.
	It therefore suffices to show that $\zeta \left( Z^2 \right)$ has positive measure (equivalently, finite index) in $Z^2$.
	By assumption, $(\psi - \varphi)(\Gamma)$ has finite index in $\Gamma$.
	Hence, $\left[ Z : \left( \hat{\psi} - \hat{\varphi} \right)(Z) \right] \le \left[ \Gamma : (\psi - \varphi)(\Gamma) \right] < \infty$.
	Let $F \subseteq Z$ be a finite set such that $\left( \hat{\psi} - \hat{\varphi} \right)(Z) + F = Z$.
	Let $(z,t) \in Z^2$ be given.
	Choose $x \in Z$ and $s \in F$ such that $\left( \hat{\psi} - \hat{\varphi} \right)(x) + s = z + \hat{\varphi}(t)$.
	Put $y = x + t$.
	Then
	\begin{equation*}
		\zeta(x,y) + (s,0) = \left( \hat{\psi}(x) - \hat{\varphi}(x) - \hat{\varphi}(t) + s, t \right) = (z,t).
	\end{equation*}
	This shows that $\zeta \left( Z^2 \right) + \left( F \times \{0\} \right) = Z^2$, so
	\begin{equation*}
		m_{Z^2} \left( \zeta \left( Z^2 \right) \right) \ge \frac{1}{|F|} > 0.
	\end{equation*}
	Thus, \eqref{eq: conv claim} holds.
	
	But
	\begin{equation*}
		\left( \eta \circ \zeta_u \right) (z,t) = \eta \left( u + \hat{\psi}(z) - \hat{\varphi}(t), t - z \right) = \left( u + \left( \hat{\psi} - \hat{\varphi} \right)(z), u + \left( \hat{\psi} - \hat{\varphi} \right)(t) \right).
	\end{equation*}
	We therefore deduce from \eqref{eq: conv claim} that
	\begin{equation} \label{eq: conv on u coset}
		\mu_n \otimes \nu_n \to 1
	\end{equation}
	in $L^2 \left( \left( u + \left( \hat{\psi} - \hat{\varphi} \right)(Z) \right)^2 \right)$.
	Taking a conjugate and multiplying by \eqref{eq: QA convergence}, we deduce
	\begin{equation*}
		c_{1,n} c_{2,n} (\lambda_{1,n} \otimes \lambda_{2,n}) \to 1
	\end{equation*}
	in $L^2 \left( \left( u + \left( \hat{\psi} - \hat{\varphi} \right)(Z) \right)^2 \right)$.
	That is,
	\begin{equation*}
		\int_{Z^2}{\left| c_{1,n} c_{2,n} \lambda_{1,n} \left( u + \left( \hat{\psi} - \hat{\varphi} \right)(z) \right) \lambda_{2,n} \left( u + \left( \hat{\psi} - \hat{\varphi} \right)(t) \right) - 1\right|^2~dz~dt} \to 0.
	\end{equation*}
	Multiplying by $\overline{\lambda}_{2,n}$ in the integrand and using the fact that each $\lambda_{i,n}$ is a homomorphism,
	\begin{equation*}
		\int_{Z^2}{\left| c_{1,n} c_{2,n} \lambda_{1,n}(u) \lambda_{1,n} \left( \left( \hat{\psi} - \hat{\varphi} \right)(z) \right) - \overline{\lambda}_{2,n}(u) \overline{\lambda}_{2,n} \left( \left( \hat{\psi} - \hat{\varphi} \right)(t) \right) \right|^2~dz~dt} \to 0.
	\end{equation*}
	It follows that for all sufficiently large $n$, we have
	$\lambda_{1,n}, \lambda_{2,n} \in \left( \left( \hat{\psi} - \hat{\varphi} \right)(Z) \right)^{\perp}$.
	
	Let $A = \left( \hat{\psi} - \hat{\varphi} \right)(Z)$, and put $A_1 = \left\{ (w_1, w_2) \in W : w_1 \in A \right\}$.
	Note that $A_1$ is a Borel set, and $m_W(A_1) = m_Z(A) > 0$.
	Fix $t = (t_1, t_2) \in A_1$ and let
	\begin{equation*}
		\rho_g(w) = \frac{\chi_1\left( \sigma_{\varphi(g)}(w_1 + t_1) \right)}{\chi_1 \left( \sigma_{\varphi(g)}(w_1) \right)}.
	\end{equation*}
	By Lemma \ref{lem: cohomologous to character}, our goal is to show that $\rho$ is a coboundary.
	Since $(g_n)_{n \in \N}$ is an arbitrary sequence in $\Gamma$ with $\tilde{\alpha}_{g_n} \to 0$,
	it suffices by Lemma \ref{lem: coboundary} to show $\rho_{g_n} \to 1$ in $L^2(W)$.
	We have already seen that
	\begin{equation*}
		c_{1,n} \lambda_{1,n}(z) \chi_1\left( \sigma_{\varphi(g_n)}(z) \right) \to 1
	\end{equation*}
	in $L^2(Z)$ and $\lambda_{1,n} \in A^{\perp}$.
	In particular, $\lambda_{1,n}(z+t_1) = \lambda_{1,n}(z)$, since $t_1 \in A$.
	Therefore,
	\begin{equation*}
		\rho_{g_n}(w) = \frac{\chi_1\left( \sigma_{\varphi(g_n)}(w_1 + t_1) \right)}{\chi_1 \left( \sigma_{\varphi(g_n)}(w_1) \right)} = \frac{c_{1,n} \lambda_{1,n}(w_1 + t_1) \chi_1\left( \sigma_{\varphi(g_n)}(w_1 + t_1) \right)}{c_{1,n} \lambda_{1,n}(w_1) \chi_1 \left( \sigma_{\varphi(g_n)}(w_1) \right)} \to 1
	\end{equation*}
	in $L^2(W)$.
	This proves that $\chi_1 \left( \sigma_{\varphi(g)}(w_1) \right)$ is cohomologous to a character.
	The same argument applies to $\chi_2 \left( \sigma_{\psi(g)}(w_2) \right)$,
	taking the set $A_2 = \left\{ (w_1, w_2) \in W : w_2 \in A \right\}$.
	
	Therefore, we may assume in the above that $\lambda_{i,n} = 1$ for every $n \in \N$ and $i = 1, 2$.
	We may also assume $c_{i,n} = \gamma_i(g_n)$, where $\chi_1 \left( \sigma_{\varphi(g)}(w_1) \right) \sim \gamma_1(g)$
	and $\chi_2 \left( \sigma_{\psi(g)}(w_2) \right) \sim \gamma_2(g)$.
	We have thus shown that for any sequence $(g_n)_{n \in \N}$ with $\tilde{\alpha}_{g_n} \to 0$, we have
	\begin{equation*}
		\left( \gamma_1 \gamma_2 \right)(g_n) \to 1.
	\end{equation*}
	Hence, by Lemma \ref{lem: coboundary}, $\gamma_1 \gamma_2 \sim 1$.
	By tweaking $\gamma_2$ up to cohomology, we may assume $\gamma_2 = \overline{\gamma}_1$, completing the proof.
\end{proof}

By passing to an extension of the original system, we will show that the Mackey group $M$ associated with $\{\varphi, \psi\}$
decomposes into the Cartesian product of Mackey groups associated with $\varphi$ and $\psi$ respectively:

\begin{thm} \label{thm: Mackey prod}
	Let $\varphi, \psi \in \End(\Gamma)$ such that $(\psi - \varphi)(\Gamma)$ has finite index in $\Gamma$,
	and suppose $\theta_1, \theta_2 \in \End(\Gamma)$
	such that $\theta_1 \circ \varphi + \theta_2 \circ \psi$ is injective.
	Let $\bfX = \bfZ \times_{\sigma} H$ be an ergodic quasi-affine $\Gamma$ system
	such that the discrete spectrum $\hat{Z}$ is $\{\varphi, \psi, \theta_1, \theta_2\}$-complete
	and $(\theta_1 \circ \varphi + \theta_2 \circ \psi)$-divisible.
	There is an ergodic quasi-affine extension $\bfX' = \bfZ' \times_{\sigma'} H$ of $\bfX$
	such that the Mackey group $M'$ decomposes as $M' = M'_{\varphi} \times M'_{\psi}$, where
	\begin{align*}
		\left( M'_{\varphi} \right)^{\perp} & = \left\{ \chi \in \hat{H} : \chi \left( \sigma'_{\varphi(g)}(w_1) \right)
		 ~\text{is a coboundary over}~\left( W', \tilde{\alpha}' \right) \right\}
	\intertext{and}
		\left( M'_{\psi} \right)^{\perp} & = \left\{ \chi \in \hat{H} : \chi \left( \sigma'_{\psi(g)}(w_2) \right)
		 ~\text{is a coboundary over}~\left( W', \tilde{\alpha}' \right) \right\}.
	\end{align*}
\end{thm}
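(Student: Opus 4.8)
The plan is to obtain $\bfX'$ as a relatively independent joining of $\bfX$ with a carefully chosen ergodic Kronecker extension $\bfZ'$ of $\bfZ$, designed so that every character that can arise as a ``gluing character'' in the description of the Mackey group from Theorem~\ref{thm: Mackey gluing} lies in the discrete spectrum of $\bfZ'$. The starting point is an auxiliary family of cocycles: for $\chi\in\hat H$ and $z_0\in Z$, let $\tau^{\chi,\varphi}_{z_0}$ be given by $(\tau^{\chi,\varphi}_{z_0})_g(s):=\chi\bigl(\sigma_{\varphi(g)}(z_0+\hat\varphi(s))\bigr)$. Using the identity $\alpha_{\varphi(g)}=\hat\varphi(\alpha_g)$ one checks that $\tau^{\chi,\varphi}_{z_0}$ is a cocycle over $\bfZ$, and, writing the quasi-affine structure of $\chi\circ\sigma$ as $\Delta_u(\chi\circ\sigma)\sim\gamma^\chi_u$ for characters $\gamma^\chi_u\in\hat\Gamma$ (chosen Borel in $u$, cf.\ the remark after Definition~\ref{defn: quasi-affine}), a direct computation yields the covariance relation $\tau^{\chi,\varphi}_{z_0+u}\sim\tau^{\chi,\varphi}_{z_0}\cdot(\gamma^\chi_u\circ\varphi)$ over $\bfZ$. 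In particular, whether $\tau^{\chi,\varphi}_{z_0}$ is cohomologous to a character over $\bfZ$ is independent of $z_0$; when it is, fix a representative $\delta^{\chi,\varphi}\in\hat\Gamma$ of the class of $\tau^{\chi,\varphi}_0$ modulo the discrete spectrum $\Lambda$, and define $\delta^{\chi,\psi}$ analogously. Let $C\le\hat\Gamma$ be the subgroup generated by all of the $\delta^{\chi,\varphi}$ and $\delta^{\chi,\psi}$; since $\hat H$ is countable, $C$ is countable.

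Next I would apply Theorem~\ref{thm: complete divisible extension} to obtain an ergodic Kronecker extension $\bfZ'=(Z',\alpha')$ of $\bfZ$ whose discrete spectrum $\Lambda'$ contains $\Lambda\cup C$ and is $\{\varphi,\psi,\theta_1,\theta_2\}$-complete and $(\theta_1\circ\varphi+\theta_2\circ\psi)$-divisible (this can be done with $\Lambda'$ countable, by the bookkeeping in the proof of that theorem), and put $\bfX':=\bfX\times_{\bfZ}\bfZ'$. By Theorem~\ref{thm: extension}, $\bfX'$ is an ergodic extension of $\bfX$ with Kronecker factor $\bfZ'$; identifying $\bfX'$ with $\bfZ'\times_{\sigma'}H$ for the pulled-back cocycle $(\sigma')_g(z'):=\sigma_g(p(z'))$ (with $p\colon Z'\to Z$ the factor map), one verifies that $\sigma'$ is again quasi-affine, so $\bfX'$ is a quasi-affine system satisfying all the hypotheses of Section~\ref{sec: Mackey}.

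It then remains to prove $(M')^{\perp}=(M'_\varphi)^{\perp}\oplus(M'_\psi)^{\perp}$ inside $\hat{H^2}=\hat H\oplus\hat H$ (whence $M'=M'_\varphi\times M'_\psi$). The inclusion $\supseteq$ is immediate from Theorem~\ref{thm: Mackey gluing} on taking the gluing character trivial. For the reverse inclusion, let $\chi_1\otimes\chi_2\in(M')^{\perp}$, so by Theorem~\ref{thm: Mackey gluing} there is $\gamma\in\hat\Gamma$ with $\chi_1(\sigma'_{\varphi(g)}(w_1))\sim\gamma(g)$ and $\chi_2(\sigma'_{\psi(g)}(w_2))\sim\overline\gamma(g)$ over $(W',\tilde\alpha')$; it suffices to show $\gamma\in\Lambda'$. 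Indeed, $\gamma\in\Lambda'$ implies $\gamma$ is a coboundary over $(W',\tilde\alpha')$ --- by Lemma~\ref{lem: coboundary}, using that $\tilde\alpha'_{g_n}\to0$ if and only if $\alpha'_{g_n}\to0$, which follows from injectivity of $\hat\theta_1'\circ\hat\varphi'+\hat\theta_2'\circ\hat\psi'$ together with compactness of $Z'$ --- so $\chi_1(\sigma'_{\varphi(g)}(w_1))\sim1$ and $\chi_2(\sigma'_{\psi(g)}(w_2))\sim1$ over $(W',\tilde\alpha')$, i.e.\ $\chi_1\in(M'_\varphi)^{\perp}$ and $\chi_2\in(M'_\psi)^{\perp}$. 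To show $\gamma\in\Lambda'$, I restrict the first cohomology to the ergodic components of $(W',\tilde\alpha')$: the map $t'\mapsto(\hat\varphi'(t'),\hat\psi'(t'))$ identifies each component with $(Z',\alpha')$, and under this identification the relevant cocycle becomes $p^*\tau^{\chi_1,\varphi}_{z_0}$ for the corresponding point $z_0=p(z)\in Z$, so $p^*\tau^{\chi_1,\varphi}_{z_0}\sim\gamma$ over $(Z',\alpha')$ for a.e.\ $z_0$. A relative-coboundary argument --- decomposing the cobounding function along the fibres of $p$ and using ergodicity of $(Z',\alpha')$ --- then shows that $\tau^{\chi_1,\varphi}_{z_0}$ is cohomologous over $\bfZ$ to a character congruent to $\gamma$ modulo $\Lambda'$, for a.e.\ $z_0$. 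Comparing two values of $z_0$ via the covariance relation (and the homomorphism property $\gamma^{\chi_1}_{u+v}\equiv\gamma^{\chi_1}_u\gamma^{\chi_1}_v\pmod\Lambda$) forces $\gamma^{\chi_1}_u\circ\varphi\in\Lambda'$ for a.e.\ $u$, whence the class of $\tau^{\chi_1,\varphi}_{z_0}$ modulo $\Lambda'$ is constant in $z_0$ and equal to $\delta^{\chi_1,\varphi}\bmod\Lambda'$. Therefore $\gamma\equiv\delta^{\chi_1,\varphi}\pmod{\Lambda'}$, and since $\delta^{\chi_1,\varphi}\in C\subseteq\Lambda'$ we conclude $\gamma\in\Lambda'$.

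The main difficulty is the final step. The family $\{\gamma^\chi_u:u\in Z\}$ attached to a quasi-affine cocycle is typically uncountable (already for the skew-product cocycle over an irrational rotation), so one cannot hope to place it all inside $\Lambda'$. The crux is that membership in $(M')^{\perp}$ is far more restrictive than it first appears: it forces the constraint $\gamma^{\chi_1}_u\circ\varphi\in\Lambda'$ for a.e.\ $u$, which collapses the dependence on $z_0$ and reduces everything to the countable set of ``base'' characters $\delta^{\chi,\varphi},\delta^{\chi,\psi}$. Extracting this constraint requires tracking cohomology classes simultaneously through the ergodic decomposition of $(W',\tilde\alpha')$ and through the extension $p\colon Z'\to Z$, together with Borel-measurable selection of the quasi-affine data in the variables $z_0$ and $u$; this bookkeeping, rather than any single hard step, is the substance of the argument.
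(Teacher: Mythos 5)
Your proposal is correct and follows the same high-level strategy as the paper --- isolate a countable set $C\subseteq\hat\Gamma$ of potential ``gluing characters'', pass to an extension whose discrete spectrum absorbs $C$ while preserving the completeness and divisibility hypotheses, and then show that every gluing character of the extension lies in $\Lambda'$ and hence is a coboundary over $(W',\tilde\alpha')$. Within that frame, however, you implement the two technical steps differently enough that the comparison is worth spelling out.

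For the definition of $C$ and the reduction step (the paper's Claim 4), the paper works directly with the cocycle $(g,w)\mapsto\chi(\sigma_{\varphi(g)}(w_1))$ over the non-ergodic Kronecker system $(W,\tilde\alpha)$ and defines $C_\varphi,C_\psi$ as sets of characters to which these cocycles are cohomologous; its proof of Claim 4 is abstract, chasing the function $\ind\otimes\chi$ through the chain of Kronecker factors $\bfW'\times_{\tilde\sigma'}H^2\to\bfW\times_{\tilde\sigma}H^2\to(Z,\alpha\circ\varphi)\times_{\sigma_\varphi}H$. You instead restrict to the ergodic components $W_z\cong(Z,\alpha)$, define the parametrized cocycles $\tau^{\chi,\varphi}_{z_0}$ over $\bfZ$, and exploit the quasi-affine covariance relation $\tau^{\chi,\varphi}_{z_0+u}\sim\tau^{\chi,\varphi}_{z_0}\cdot(\gamma^\chi_u\circ\varphi)$ plus an explicit Fourier decomposition of the cobounding function along the extension $p\colon Z'\to Z$. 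This ``relative-coboundary'' calculation is correct (one expands $G\colon Z'\to S^1$ with respect to coset representatives of $\Lambda'/\Lambda$ and uses ergodicity of $(Z,\alpha)$ to kill the moduli), and the covariance comparison does force $\gamma^{\chi_1}_u\circ\varphi\in\Lambda'$ for a.e.\ $u$, collapsing the $z_0$-dependence to the single countable datum $\delta^{\chi_1,\varphi}$. This is more explicit but also more delicate than the paper's abstract factor argument, and the measurability/selection issues you flag at the end are real --- the paper's route avoids them entirely by never unwinding the quasi-affine data in the variable $u$.

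For the final step (``$\gamma\in\Lambda'$ implies $\gamma$ is a coboundary over $(W',\tilde\alpha')$''), the paper constructs an explicit cobounding function $F=(\lambda_1\otimes\lambda_2)|_{W'}$ from the completeness and divisibility of $\Lambda'$, whereas you invoke Lemma~\ref{lem: coboundary} together with the observation that $\tilde\alpha'_{g_n}\to0$ forces $\alpha'_{g_n}\to0$ by injectivity of $\hat\theta'_1\circ\hat\varphi'+\hat\theta'_2\circ\hat\psi'$ on the compact group $Z'$. Both arguments ultimately rest on the same divisibility hypothesis (which is what makes that map injective), so the choice is a matter of taste; your topological version is slightly shorter, the paper's explicit version is more constructive. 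In short: a genuinely different implementation of Claim~4 and a cosmetically different final step, both sound, with the cohomological bookkeeping in your Claim~4 analogue being the place where a fully written-out proof would require care with Borel selection of the quasi-affine characters $\gamma^\chi_u$ --- a point you correctly identify as the substance of the argument.
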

\begin{proof}
	Let
	\begin{align*}
		C_{\varphi} & = \left\{ \gamma \in \hat{\Gamma} : \exists \chi \in \hat{H}, \chi(\sigma_{\varphi(g)}(w_1)) \sim \gamma(g)~\text{over}~(W, \tilde{\alpha}) \right\}
	\intertext{and}
		C_{\psi} & = \left\{ \gamma \in \hat{\Gamma} : \exists \chi \in \hat{H}, \chi(\sigma_{\psi(g)}(w_2)) \sim \gamma(g)~\text{over}~(W, \tilde{\alpha}) \right\}.
	\end{align*}
	Then let $C = C_{\varphi} \cap C_{\psi}$.
	Note that a character $\gamma \in \hat{\Gamma}$ is cohomologous to another character $\gamma' \in \hat{\Gamma}$
	if and only if $\overline{\gamma} \gamma'$ is an eigenvalue for the system $(W, \tilde{\alpha})$.
	That is,
	\begin{equation*}
		\overline{\gamma} \gamma' \in \left\{ (\lambda_1 \circ \varphi)(\lambda_2 \circ \psi) : \lambda_1, \lambda_2 \in \hat{Z} \right\} \subseteq \hat{Z}.
	\end{equation*}
	Moreover, the groups $\hat{Z}$ and $\hat{H}$ are countable, so $C$ is a countable set of characters.
	
	By Theorem \ref{thm: complete divisible extension}, let $\Lambda'$ be a countable subgroup of $\hat{\Gamma}$
	that is $\{\varphi, \psi, \theta_1, \theta_2\}$-complete, $(\theta_1 \circ \varphi + \theta_2 \circ \psi)$-divisible,
	and contains the group generated by $\Lambda = \hat{Z}$ and $C$, and let $Z' = \hat{\Lambda}'$.
	For $g \in \Gamma$, let $\alpha'_g \in Z'$ be the element such that $\alpha'_g(\lambda) = \lambda(g)$
	for every $\lambda \in \Lambda'$.
	Since $\Lambda \subseteq \Lambda'$,
	there is a surjective homomorphism $\pi : Z' \to Z$ such that $\pi(\alpha'_g) = \alpha_g$.
	Define $\sigma' : \Gamma \times Z' \to H$ by $\sigma'_g(z) = \sigma_g(\pi(z))$. \\
	
	\underline{Claim 1}: $\sigma'$ is a cocycle. \\
	
	Indeed, for any $g, h \in \Gamma$ and $z \in Z'$,
	\begin{align*}
		\sigma'_{g+h}(z) & = \sigma_{g+h}(\pi(z)) & (\text{definition of}~\sigma') \\
		 & = \sigma_g(\pi(z) + \alpha_h) + \sigma_h(\pi(z)) & (\sigma~\text{is a cocycle}) \\
		 & = \sigma_g(\pi(z + \alpha'_h)) + \sigma_h(\pi(z)) & (\pi~\text{is a factor map}) \\
		 & = \sigma'_g(z + \alpha'_h) + \sigma'_h(z).
	\end{align*}
	This proves the claim. \\
	
	\underline{Claim 2}: $\sigma'$ is quasi-affine. \\
	
	The cocycle $\sigma$ is quasi-affine, so there exist measurable functions $F : Z \times Z \to S^1$
	and $\gamma : Z \to \hat{\Gamma}$ such that
	\begin{equation*}
		\frac{\sigma_g(z + t)}{\sigma_g(z)} = \gamma(t,g) \frac{F(t,z + \alpha_g)}{F(t,z)}.
	\end{equation*}
	Define $F' : Z' \times Z' \to S^1$ by $F'(t,z) = F(\pi(t), \pi(z))$ and $\gamma' : Z' \to \hat{\Gamma}$ by $\gamma'(t, \cdot) = \gamma(\pi(t), \cdot)$.
	Then
	\begin{equation*}
		\frac{\sigma'_g(z + t)}{\sigma'_g(z)} = \frac{\sigma_g(\pi(z) + \pi(t))}{\sigma_g(\pi(z))} = \gamma(\pi(t),g) \frac{F(\pi(t), \pi(z) + \alpha_g)}{F(\pi(t), \pi(z))} = \gamma'(t,g) \frac{F'(t, z+\alpha'_g)}{F'(z)},
	\end{equation*}
	so $\sigma'$ is quasi-affine as claimed. \\
	
	\underline{Claim 3}: The quasi-affine system $\bfZ' \times_{\sigma'} H$ is an ergodic system
	with Kronecker factor $\bfZ' = (Z', \alpha')$. \\
	
	We need to check that $\sigma'$ is a weakly mixing cocycle.
	That is, for any $\chi \in \hat{H}$, if $\chi \circ \sigma'$ is cohomologous to a character, then $\chi = 1$
	(see \cite[Proposition 7.5]{abb}).
	Suppose $\chi \in \hat{H}$ and $\chi \circ \sigma' \sim \gamma \in \hat{\Gamma}$ over $\bfZ'$.
	Let $F : Z' \to S^1$ such that
	\begin{equation*}
		\chi(\sigma'_g(z)) = \gamma(g) \frac{F(z + \alpha'_g)}{F(z)}
	\end{equation*}
	for every $g \in \Gamma$ and almost every $z \in Z'$.
	Define $G : Z \times H \to S^1$ by $G(z,x) = \overline{F}(z) \chi(x)$.
	Then
	\begin{equation*}
		G \left( z + \alpha'_g, x + \sigma'_g(z) \right) = \overline{F} \left( z + \alpha'_g \right) \chi \left( \sigma'_g(z) \right) \chi(x) = \gamma(g) \overline{F}(z) \chi(x) = \gamma(g) G(z,x).
	\end{equation*}
	Hence, $G$ is an eigenfunction for the system $\bfZ' \times_{\sigma'} H$ (with eigenvalue $\gamma$).
	The function $F \otimes \ind_H$ is measurable with respect to the Kronecker factor of $\bfZ' \times_{\sigma'} H$,
	since $\bfZ'$ is a Kronecker system and therefore contained in the Kronecker factor.
	Therefore, $\ind_{Z'} \otimes \chi = (F \otimes \ind_H)G$ is measurable with respect to the Kronecker factor.
	
	Now, the projection of $\ind_{Z'} \otimes \chi$ under the factor map $\bfZ' \times_{\sigma'} H \to \bfZ \times_{\sigma} H$
	is the function $\ind_Z \otimes \chi$.
	Thus, $\ind_Z \otimes \chi$ is measurable with respect to the Kronecker factor of $\bfZ \times_{\sigma} H$,
	but by assumption, the Kronecker factor of $\bfZ \times_{\sigma} H$ is $\bfZ$.
	It follows that the function $(z,x) \mapsto \chi(x) = (\ind_Z \otimes \chi)(z,x)$ does not depend on $x$.
	That is, $\chi = 1$. \\
	
	As a brief aside, it is worth remarking that the system $\bfZ' \times_{\sigma'} H$ is (isomorphic to)
	the relatively independent joining of $\bfZ \times_{\sigma} H$ and $\bfZ'$
	with respect to the common factor $\bfZ$.
	We therefore could have shown the previous three claims by establishing this isomorphism
	and referring to Theorem \ref{thm: HK extension}.
	However, in order to prove that $\bfX' = \bfZ' \times_{\sigma'} H$ is the desired extension of $\bfX$,
	it is more convenient to work with the system written explicitly as a group extension over its Kronecker factor
	rather than appealing to general abstract statements about Host--Kra factors. \\
	
	It remains to show that the ergodic quasi-affine system $\bfZ' \times_{\sigma'} H$ is the desired extension.
	Let us introduce some notation.
	We define a system $(W', \tilde{\alpha}')$ by
	\begin{equation*}
		W' = \left\{ \left( z + \varphi(t), z + \psi(t) \right) : z, t \in Z' \right\}
	\end{equation*}
	and
	\begin{equation*}
		\tilde{\alpha}'_g = \left( \alpha'_{\varphi(g)}, \alpha'_{\psi(g)} \right).
	\end{equation*}
	We then define the cocycle $\tilde{\sigma}' : \Gamma \times W' \to H^2$ by
	\begin{equation*}
		\tilde{\sigma}'_g(w) = \left( \sigma'_{\varphi(g)}(w_1), \sigma'_{\psi(g)}(w_2) \right)
	\end{equation*}
	and associate a Mackey group $M'$ with annihilator
	\begin{equation*}
		(M')^{\perp} = \left\{ \tilde{\chi} \in \hat{H^2} : \tilde{\chi} \circ \tilde{\sigma}'~\text{is a coboundary over}~(W', \tilde{\alpha}') \right\}.
	\end{equation*}
	We want to show $M' = M'_{\varphi} \times M'_{\psi}$.
	To this end, we prove one more claim: \\
	
	\underline{Claim 4}: Let $\chi \in \hat{H}$ and $\gamma \in \hat{\Gamma}$.
	\begin{enumerate}[(a)]
		\item	If $\chi \left( \sigma'_{\varphi(g)}(w_1) \right) \sim \gamma(g)$ over $(W', \tilde{\alpha}')$,
			then there exists $\gamma' \in C_{\varphi}$ such that $\gamma \sim \gamma'$.
		\item	If $\chi \left( \sigma'_{\psi(g)}(w_2) \right) \sim \gamma(g)$ over $(W', \tilde{\alpha}')$,
			then there exists $\gamma' \in C_{\psi}$ such that $\gamma \sim \gamma'$.
	\end{enumerate}
	
	The proofs of items (a) and (b) are the same, so we prove only (a).
	Suppose $\chi \left( \sigma'_{\varphi(g)}(w_1) \right) \sim \gamma(g)$ over $\bfW' = (W', \tilde{\alpha}')$.
	Arguing as in the proof of Claim 3 above, the function $\ind_{W'} \otimes \chi \otimes \ind_H : W' \times H^2 \to S^1$
	is measurable with respect to the Kronecker factor of $\bfW' \times_{\tilde{\sigma}'} H^2$.
	Projecting onto the factor $\bfW \times_{\tilde{\sigma}} H^2$, it follows that $\ind_W \otimes \chi \otimes \ind_H$
	is measurable with respect to the Kronecker factor of $\bfW \times_{\tilde{\sigma}} H^2$.
	We then project again to the factor $(Z, \alpha \circ \varphi) \times_{\sigma_{\varphi}} H$ 
	(with the action of $\Gamma$ given by $g \cdot (z,x) = \left( z + \alpha_{\varphi(g)}, x + \sigma_{\varphi(g)}(z) \right)$)
	to conclude that $\ind_Z \otimes \chi$ is measurable with respect to the Kronecker factor $\bfZ_{\varphi}$
	of $(Z, \alpha \circ \varphi) \times_{\sigma_{\varphi}} H$.
	Noting that $L^2(\mZ_{\varphi})$ is spanned by functions of the form $F(z) \zeta(x)$
	with $\zeta \in \hat{H}$ such that $\zeta \circ \sigma_{\varphi}$ is cohomologous to a character
	(see the proof of \cite[Proposition 7.5(2)]{abb}), it follows that $\chi \circ \sigma_{\varphi}$ is cohomologous
	to a character $\gamma' \in \hat{\Gamma}$ over $(Z, \alpha \circ \varphi)$.
	Therefore, $\gamma' \in C_{\varphi}$ and $\gamma \sim \gamma'$. \\
	
	Let $\chi_1 \otimes \chi_2 \in (M')^{\perp}$.
	By Theorem \ref{thm: Mackey gluing}, there exists $\gamma \in \hat{\Gamma}$ such that
	$\chi_1(\sigma'_{\varphi(g)}(w_1)) \sim \gamma(g)$ and
	$\chi_2(\sigma'_{\psi(g)}(w_2)) \sim \overline{\gamma}(g)$ over $(W', \tilde{\alpha}')$.
	By Claim 4, we may assume $\gamma \in C_{\varphi} \cap C_{\psi} = C$.
	We constructed the extension $\bfZ' \times_{\sigma'} H$ so that $\Lambda' \supseteq C$.
	Therefore, $\gamma \in \Lambda'$.
	Also by construction, $\Lambda'$ is $\{\varphi, \psi, \theta_1, \theta_2\}$-complete
	and ($\theta_1 \circ \varphi + \theta_2 \circ \psi)$-divisible.
	Using the divisibility condition, let $\lambda \in \Lambda'$
	such that $\lambda \circ (\theta_1 \circ \varphi + \theta_2 \circ \psi) = \gamma$.
	Let $\lambda_1 = \lambda \circ \theta_1$ and $\lambda_2 = \lambda \circ \theta_2$.
	By the completeness condition, $\lambda_1, \lambda_2 \in \Lambda'$.
	Moreover, $(\lambda_1 \circ \varphi)(\lambda_2 \circ \psi) = \gamma$.
	So, taking $F = \left. \left( \lambda_1 \otimes \lambda_2 \right) \right|_{W'} : W' \to S^1$,
	we have $\gamma(g) = \Delta_gF$, so $\gamma \sim 1$.
	Thus,
	\begin{equation*}
		\chi_1 \left( \sigma'_{\varphi(g)}(w_1) \right) \sim \chi_2 \left( \sigma'_{\psi(g)}(w_2) \right) \sim 1.
	\end{equation*}
	That is, $\chi_1 \otimes \chi_2 \in \left( M'_{\varphi} \right)^{\perp} \times \left( M'_{\psi} \right)^{\perp}$ as desired.
\end{proof}

%%%%%%%%%%%%%%%%%%%%%%%%%%%%%%%%%%%%%%%%%%%%%%%%%%%%%%%%
% ---- LIMIT FORMULA ---- %
%%%%%%%%%%%%%%%%%%%%%%%%%%%%%%%%%%%%%%%%%%%%%%%%%%%%%%%%

\section{Limit formula} \label{sec: limit}

In this section, we use the Mackey group $M$ defined in the previous section in order to derive a limit formula
for double ergodic averages over quasi-affine systems:

\begin{thm} \label{thm: limit}
	Let $\varphi, \psi, \theta_1, \theta_2 \in \End(\Gamma)$ such that $\psi - \varphi$ has finite index image in $\Gamma$
	and $\theta_1 \circ \varphi + \theta_2 \circ \psi$ is injective.
	Let $\bfX = \bfZ \times_{\sigma} H$ be an ergodic quasi-affine $\Gamma$-system
	whose discrete spectrum is $\{\varphi, \psi, \theta_1, \theta_2\}$-complete
	and $(\theta_1 \circ \varphi + \theta_2 \circ \psi)$-divisible.
	Let $M \le H^2$ be the Mackey group associated with $\{\varphi, \psi\}$.
	Then there is a measurable function $\omega : Z \times Z \to H^2$ such that
	\begin{enumerate}[(1)]
		\item	$\omega(0,z) \in M$ for all $z \in Z$
			and $t \mapsto \omega(t,\cdot) + M$ is a continuous function from $Z$ to $\M(Z, H^2/M)$; and
		\item	for any $f_1, f_2 \in L^{\infty}(Z \times H)$, we have
			\begin{align} \label{eq: limit}
				\UClim_{g \in \Gamma}&~{f_1(T_{\varphi(g)}(z,x)) f_2(T_{\psi(g)}(z,x))} \nonumber \\
				 & = \int_{Z \times M}
				 {f_1 \left( z + \hat{\varphi}(t), x + u + \omega_1(t,z) \right)
				 f_2 \left( z + \hat{\psi}(t), x + v + \omega_2(t,z) \right)~dm_Z(t)~dm_M(u,v)}
			\end{align}
			in $L^2(Z \times H)$, where $\omega = (\omega_1, \omega_2)$.
	\end{enumerate}
\end{thm}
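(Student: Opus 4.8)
The plan is to follow the proof of Corollary \ref{cor: Kronecker limit formula}, with the rôle of the ergodic Kronecker base now played by the system $\bfW = (W, \tilde\alpha)$ and with everything evaluated along the ``diagonal section'' $z \mapsto \big((z,z),(x,x)\big)$ carrying $Z \times H$ into $W \times H^2$. Using $T_{\varphi(g)}(z,x) = \big(z + \hat\varphi(\alpha_g),\, x + \sigma_{\varphi(g)}(z)\big)$ (and the analogous identity for $\psi$, recalling $\alpha_{\varphi(g)} = \hat\varphi(\alpha_g)$), I would first expand $f_1$ and $f_2$ in Fourier series along the $H$-fibre; by linearity and an $L^2$-approximation argument exploiting $f_i \in L^\infty$, matters reduce to the case $f_i = h_i \otimes \chi_i$ with $h_i \in L^\infty(Z)$ and $\chi_i \in \hat H$. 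Writing $\tilde\chi = \chi_1 \otimes \chi_2 \in \hat{H^2}$, the $x$-dependence factors out as $(\chi_1\chi_2)(x)$ and \eqref{eq: limit} becomes the assertion that, in $L^2(Z)$,
\begin{align*}
	\UClim_{g \in \Gamma} &\; h_1(z + \alpha_{\varphi(g)})\, h_2(z + \alpha_{\psi(g)})\; \tilde\chi\big(\tilde\sigma_g((z,z))\big) \\
	&= \left( \int_M \tilde\chi \; dm_M \right) \int_Z h_1(z + \hat\varphi(t))\, h_2(z + \hat\psi(t))\; \tilde\chi(\omega(t,z)) \; dm_Z(t),
\end{align*}
for a suitable $\omega = (\omega_1, \omega_2)$ still to be produced.

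To construct $\omega$: for a.e.\ $z \in Z$ the component $(W_z, \tilde\alpha)$ is an ergodic (in fact uniquely ergodic) Kronecker system, isomorphic to $(Z, \alpha)$ via $s \mapsto (z + \hat\varphi(s),\, z + \hat\psi(s))$; under this identification the restriction of $\tilde\sigma$ becomes the cocycle $g \mapsto \big(\sigma_{\varphi(g)}(z + \hat\varphi(s)),\, \sigma_{\psi(g)}(z + \hat\psi(s))\big)$ over $(Z, \alpha)$, whose Mackey group is $M_z$, and $M_z = M$ for a.e.\ $z$. I would apply Proposition \ref{prop: continuous extension mod Mackey} in each such fibre --- choosing the resulting data jointly measurably in $z$ by a measurable selection --- and then lift to $H^2$ and evaluate at the basepoint $(z,z)$, obtaining $\omega : Z \times Z \to H^2$ with $\omega(0,z) \in M$ for all $z$ and $\omega(\alpha_g, z) \equiv \tilde\sigma_g((z,z)) \pmod{M}$. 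Continuity of $t \mapsto \omega(t,\cdot)+M$ in each fibre, together with dominated convergence over $z$ (using compactness of $H^2/M$), then yields that $t \mapsto \omega(t, \cdot) + M$ is continuous $Z \to \M(Z, H^2/M)$, which is part (1).

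With $\omega$ in hand, I would establish the reduced identity fibrewise. If $\tilde\chi \notin M^\perp = M_z^\perp$, both sides vanish: the right side because $\int_M \tilde\chi \, dm_M = 0$, and the left side in $L^2(Z)$ by a van der Corput argument (Lemma \ref{lem: vdC}) run inside the components $(W_z, \tilde\alpha)$ along the lines of the vanishing case of Corollary \ref{cor: Kronecker limit formula} / Proposition \ref{prop: Mackey ET}. If $\tilde\chi \in M^\perp$, then $\int_M \tilde\chi \, dm_M = 1$ and $\tilde\chi \circ \tilde\sigma$ is a coboundary over $(W_z, \tilde\alpha)$ (by Proposition \ref{prop: Mackey properties}(1), for a.e.\ $z$); choosing the transfer function so that $\tau \mapsto \tilde\chi(\omega(\tau, \cdot))$ is realised continuously and restricting to the basepoint, the summand on the left equals $\Psi(\alpha_g, z)$, where $\Psi(\tau, z) := h_1(z + \hat\varphi(\tau))\, h_2(z + \hat\psi(\tau))\, \tilde\chi(\omega(\tau, z))$, and $\tau \mapsto \Psi(\tau, \cdot)$ is continuous $Z \to L^2(Z)$ (strong continuity of translation on $L^2(Z)$ for the $h_i$ factors, part (1) for the $\omega$ factor). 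Since $(Z, \alpha)$ is uniquely ergodic, Lemma \ref{lem: Hil-valued ET} gives $\UClim_g \Psi(\alpha_g, z) = \int_Z \Psi(\tau, z) \, d\tau$ in $L^2(Z)$, which is precisely the right side. Undoing the Fourier reduction recovers \eqref{eq: limit}.

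I expect the main obstacle to be the interplay between the non-ergodicity of $\bfW$ and the diagonal section: the orbit of a point $\big((z,z),(x,x)\big)$ equidistributes only within the component $W_z \times H^2$, not in all of $W \times H^2$, so one cannot simply quote Corollary \ref{cor: Kronecker limit formula} and must rerun its proof inside each component. The delicate points are (i) arranging the fibrewise continuous-extension data --- and, in the coboundary case, the transfer functions --- to depend measurably on $z$, so that the single function $\omega$ enjoys the continuity demanded in part (1); and (ii) ensuring that the reduction to a statement about functions on the genuinely (uniquely) ergodic base $(Z, \alpha)$ survives the passage to the basepoint. Both of these rest crucially on the uniformity $M_z = M$ for a.e.\ $z$, which is what allows a single formula, with a single Mackey group $M$, to describe the limit.
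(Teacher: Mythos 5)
Your proposal is correct and follows essentially the same route as the paper: reduce to $f_i = h_i \otimes \chi_i$, build $\omega$ from Proposition \ref{prop: continuous extension mod Mackey} applied on the (uniquely ergodic) ergodic components of $\bfW$ using the isomorphism $W_z \cong (Z,\alpha)$, dispose of $\tilde\chi \notin M^\perp$ by Mackey orthogonality, and handle $\tilde\chi \in M^\perp$ by unique ergodicity of $(Z,\alpha)$ via Lemma \ref{lem: Hil-valued ET}. The only substantive point you gloss over is that the vanishing case is not literally a van der Corput ``inside a component'': the paper isolates it as Proposition \ref{prop: Mackey orthogonality}, where van der Corput is run in $L^2(Z \times H)$ and the computation of $\xi_h$ uses the finite-index hypothesis on $(\psi - \varphi)(\Gamma)$ (so that $\mI_{\psi-\varphi} \subseteq \mZ$) to pass to an inner product in $L^2(W \times H^2)$, after which Proposition \ref{prop: Mackey ET} is applied fibrewise over the ergodic decomposition of $W$.
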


Before proving Theorem \ref{thm: limit}, we note an immediate corollary:

\begin{cor} \label{cor: twisted limit}
	In the setup of Theorem \ref{thm: limit}, for any $f_0, f_1, f_2 \in L^{\infty}(Z \times H)$
	and any continuous function $\kappa : Z \to \C$, one has
	\begin{equation} \begin{split} \label{eq: twisted limit}
		\UClim_{g \in \Gamma}&~{\kappa(\alpha_g)
		 \int_{Z \times H}{f_0 \cdot T_{\varphi(g)} f_1 \cdot T_{\psi(g)} f_2~d(m_Z \times m_H)}} \\
		 = &~\int_{Z^2 \times H \times M}{\kappa(t)~f_0(z,x)~f_1 \left( z + \hat{\varphi}(t), x + u + \omega_1(t,z) \right)} \\
		 & \qquad \qquad {~f_2 \left( z + \hat{\psi}(t), x + v + \omega_2(t,z) \right)~dm_Z(z)~dm_Z(t)~dm_H(x)~dm_M(u,v)}.
	\end{split} \end{equation}
\end{cor}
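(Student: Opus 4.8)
The plan is to deduce Corollary~\ref{cor: twisted limit} from Theorem~\ref{thm: limit} by absorbing the weight $\kappa(\alpha_g)$ into the three functions, after first reducing to the case where $\kappa$ is a character. For that reduction, recall that trigonometric polynomials on the compact abelian group $Z$ are uniformly dense in $C(Z)$ by Stone--Weierstrass; if $\norm{\infty}{\kappa - p} \le \eps$ then $|\kappa(\alpha_g) - p(\alpha_g)| \le \eps$ for \emph{every} $g \in \Gamma$, so the Ces\`{a}ro averages on the left of \eqref{eq: twisted limit} attached to $\kappa$ and to $p$ differ by at most $\eps \,\norm{\infty}{f_0}\norm{\infty}{f_1}\norm{\infty}{f_2}$, uniformly over F{\o}lner sequences, while the right-hand side changes by at most $\eps \,\norm{\infty}{f_0}\norm{\infty}{f_1}\norm{\infty}{f_2}$ since the measures appearing there are probability measures. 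A routine Cauchy-sequence argument then shows that the uniform Ces\`{a}ro limit on the left exists for every continuous $\kappa$ and that it suffices to prove \eqref{eq: twisted limit} when $\kappa$ is a character of $Z$. Fix such a $\kappa$, let $\lambda = \kappa \circ \alpha \in \Lambda$ be the corresponding eigenvalue (so $\kappa(\alpha_g) = \lambda(g)$), and for $\chi \in \Lambda$ write $\hat{\chi} \in \hat{Z}$ for the dual character $z \mapsto z(\chi)$, so that $\hat{\lambda} = \kappa$.

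The heart of the argument is to split $\lambda$ through $\varphi$ and $\psi$; this is the only place the hypotheses on the discrete spectrum are used. By $(\theta_1 \circ \varphi + \theta_2 \circ \psi)$-divisibility, choose $\mu \in \Lambda$ with $\mu \circ (\theta_1 \circ \varphi + \theta_2 \circ \psi) = \lambda$, and set $\nu_1 = \mu \circ \theta_1$ and $\nu_2 = \mu \circ \theta_2$; these lie in $\Lambda$ by $\{\theta_1,\theta_2\}$-completeness, and $\lambda = (\nu_1 \circ \varphi)(\nu_2 \circ \psi)$. Consequently $\kappa(\alpha_g) = \lambda(g) = \hat{\nu}_1(\alpha_{\varphi(g)})\, \hat{\nu}_2(\alpha_{\psi(g)})$.

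Now absorb these phases by setting $\tilde{f}_1 = (\hat{\nu}_1 \otimes \ind_H) f_1$, $\tilde{f}_2 = (\hat{\nu}_2 \otimes \ind_H) f_2$, and $\tilde{f}_0 = (\overline{\hat{\nu}_1 \hat{\nu}_2} \otimes \ind_H) f_0$. From $T_{\varphi(g)} \tilde{f}_1 = \hat{\nu}_1(\alpha_{\varphi(g)}) \cdot (\hat{\nu}_1 \otimes \ind_H) \cdot T_{\varphi(g)} f_1$ and the analogue for $\tilde{f}_2$, one gets the pointwise identity
\begin{equation*}
	\kappa(\alpha_g)\, f_0 \cdot T_{\varphi(g)} f_1 \cdot T_{\psi(g)} f_2 = \tilde{f}_0 \cdot T_{\varphi(g)} \tilde{f}_1 \cdot T_{\psi(g)} \tilde{f}_2
\end{equation*}
on $Z \times H$. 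Integrating over $Z \times H$ and applying Theorem~\ref{thm: limit} to the pair $\tilde{f}_1, \tilde{f}_2$ (its conclusion \eqref{eq: limit} holds in $L^2(Z \times H)$, hence may be paired with $\tilde{f}_0 \in L^{\infty}(Z \times H) \subseteq L^2(Z \times H)$) expresses the left-hand side of \eqref{eq: twisted limit} as the integral of $\tilde{f}_0$ against the right-hand side of \eqref{eq: limit}. It then remains to unwind the $\hat{\nu}_i$: using $\hat{\nu}_1 \circ \hat{\varphi} = \hat{\nu_1 \circ \varphi}$, $\hat{\nu}_2 \circ \hat{\psi} = \hat{\nu_2 \circ \psi}$, and $(\nu_1 \circ \varphi)(\nu_2 \circ \psi) = \lambda$, the $z$-dependent factors $\hat{\nu}_i(z)$ cancel against those carried by $\tilde{f}_0$, and the surviving phase is exactly $\hat{\nu_1 \circ \varphi}(t)\, \hat{\nu_2 \circ \psi}(t) = \hat{\lambda}(t) = \kappa(t)$, which produces precisely the right-hand side of \eqref{eq: twisted limit}.

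I do not expect any real obstacle. The only step with genuine content is the character-splitting, which is exactly where the injectivity of $\theta_1 \circ \varphi + \theta_2 \circ \psi$ enters --- via the $(\theta_1 \circ \varphi + \theta_2 \circ \psi)$-divisibility of the discrete spectrum --- and where one must be careful with the identification $\hat{Z} = \Lambda$ and with the induced endomorphisms $\hat{\varphi}, \hat{\psi}$; the remaining manipulations are bookkeeping with the defining integral formulas.
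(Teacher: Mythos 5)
Your proposal is correct and is essentially the same argument as the paper's: reduce to characters via Stone--Weierstrass, use the $(\theta_1 \circ \varphi + \theta_2 \circ \psi)$-divisibility of the discrete spectrum to split the character $\kappa$ through $\varphi$ and $\psi$, twist the three functions by the resulting phases, and apply Theorem \ref{thm: limit}. The only differences are cosmetic --- you work with $\mu, \nu_1, \nu_2 \in \Lambda$ and their duals $\hat{\nu}_i \in \hat{Z}$, where the paper uses $\kappa' \in \hat{Z}$ together with the induced maps $\hat{\theta}_i$ directly; under the canonical identification $\Lambda \cong \hat{Z}$ these are the same twisting characters, and you have also fleshed out the Stone--Weierstrass reduction a bit more explicitly.
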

\begin{proof}
	By the Stone--Weierstrass theorem, we may assume $\kappa$ is a character on $Z$.
	Since the discrete spectrum $\Lambda \cong \hat{Z}$ is $(\theta_1 \circ \varphi + \theta_2 \circ \psi)$-divisible,
	there exists $\kappa' \in \hat{Z}$ such that
	$\kappa' \circ \left( \hat{\theta}_1 \circ \hat{\varphi} + \hat{\theta}_2 \circ \hat{\psi} \right) = \kappa$.
	Define functions $\tilde{f}_i \in L^{\infty}(Z \times H)$ by
	\begin{align*}
		\tilde{f}_0(z,x) & = \overline{\kappa' \left( \hat{\theta}_1(z) + \hat{\theta}_2(z) \right)} f_0(z,x), \\
		\tilde{f}_1(z,x) & = \kappa' \left( \hat{\theta}_1(z) \right) f_1(z,x), \\
		\tilde{f}_2(z,x) & = \kappa' \left( \hat{\theta}_2(z) \right) f_2(z,x).
	\end{align*}
	
	The left hand side of \eqref{eq: twisted limit} is equal to
	\begin{equation} \label{eq: LHS}
		\UClim_{g \in \Gamma}{\int_{Z \times H}{\tilde{f}_0 \cdot T_{\varphi(g)} \tilde{f}_1 \cdot T_{\psi(g)} \tilde{f_2}}},
	\end{equation}
	while the right hand side of \eqref{eq: twisted limit} is equal to
	\begin{equation} \begin{split} \label{eq: RHS}
		\int_{Z^2 \times H \times M}&{\tilde{f}_0(z,x)~\tilde{f}_1 \left( z + \hat{\varphi}(t), x + u + \omega_1(t,z) \right)} \\
		 &~{\tilde{f}_2 \left( z + \hat{\psi}(t), x + v + \omega_2(t,z) \right)~dm_Z(z)~dm_Z(t)~dm_H(x)~dm_M(u,v)}.
	\end{split} \end{equation}
	The quantities in \eqref{eq: LHS} and \eqref{eq: RHS} are equal by Theorem \ref{thm: limit}.
\end{proof}

The first step in the proof of Theorem \ref{thm: limit} is the following enhancement of Proposition \ref{prop: Mackey ET}:

\begin{prop} \label{prop: Mackey orthogonality}
	Let $f_1, f_2 \in L^{\infty}(Z \times H)$.
	Suppose that for every $\tilde{\chi} \in M^{\perp}$, one has
	\begin{equation} \label{eq: orthogonal to chi}
		\int_{H^2}{f_1(w_1, x_1) f_2(w_2, x_2) \tilde{\chi}(x)~dx} = 0
	\end{equation}
	for a.e. $w = (w_1, w_2) \in W$.
	Then
	\begin{equation*}
		\UClim_{g \in \Gamma}{f_1(z + \alpha_{\varphi(g)}, x + \sigma_{\varphi(g)}(z)) f_2(z + \alpha_{\psi(g)}, x + \sigma_{\psi(g)}(z))} = 0
	\end{equation*}
	in $L^2(Z \times H)$.
\end{prop}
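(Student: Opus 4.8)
The plan is to reduce to simple tensors and then apply the van der Corput lemma (Lemma~\ref{lem: vdC}) twice. First I would Fourier-expand $f_i=\sum_{\chi\in\hat H}\hat f_i(\cdot,\chi)\chi$ ($i=1,2$); a routine $L^2$-approximation (truncate to finitely many characters of $H$, controlling the $L^2$-tail of $f_2$ against $\|f_1\|_\infty$ and then that of $f_1$ against the $L^\infty$-norm of a fixed finite truncation of $f_2$) reduces the statement to the case $f_1(z,x)=c_1(z)\chi_1(x)$, $f_2(z,x)=c_2(z)\chi_2(x)$ with $c_i\in L^\infty(Z)$, $\chi_i\in\hat H$. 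For such $f_i$ the hypothesis becomes the dichotomy: \emph{either} $\chi_1\otimes\chi_2\notin M^\perp$, \emph{or} $(c_1\otimes c_2)|_W=0$ a.e. Since
\[
f_1(T_{\varphi(g)}(z,x))\,f_2(T_{\psi(g)}(z,x))=p_g(z)\,(\chi_1\chi_2)(x),\qquad p_g(z):=(c_1\otimes c_2)\bigl((z,z)+\tilde\alpha_g\bigr)\,(\chi_1\otimes\chi_2)\bigl(\tilde\sigma_g(z,z)\bigr),
\]
it suffices to prove $\UClim_g p_g=0$ in $L^2(Z)$; this is trivial when $(c_1\otimes c_2)|_W=0$, so from now on assume $\chi_1\otimes\chi_2\notin M^\perp$.

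Now I would apply Lemma~\ref{lem: vdC} to $(p_g)$ in $\Hil=L^2(Z)$. Using the cocycle identity, $\langle p_{g+h},p_g\rangle=\int_Z\Delta_{\varphi(h)}f_1(z)\,\Delta_{\psi(h)}f_2\bigl(z+\alpha_{(\psi-\varphi)(g)}\bigr)\,dm_Z(z)$ (here $\Delta_{\varphi(h)}f_1$ and $\Delta_{\psi(h)}f_2$ depend only on the $Z$-coordinate), a continuous function of $\alpha_{(\psi-\varphi)(g)}$. Since $(\psi-\varphi)(\Gamma)$ has finite index, $g\mapsto\alpha_{(\psi-\varphi)(g)}$ has dense image in the closed subgroup $\bar Z:=(\hat\psi-\hat\varphi)(Z)$ and makes $(\bar Z,\text{rotation})$ a uniquely ergodic Kronecker system; hence $\xi_h:=\UClim_g\langle p_{g+h},p_g\rangle$ exists, and — as $(z,v)\mapsto(z,z+v)$ is a measure-preserving isomorphism $Z\times\bar Z\to W$ — it equals
\[
\xi_h=\int_W\overline{\hat\Xi(w)}\,\hat\Xi(w+\tilde\alpha_h)\,\bigl((\chi_1\otimes\chi_2)\circ\tilde\sigma_h\bigr)(w)\,dm_W(w),\qquad \hat\Xi:=(c_1\otimes c_2)|_W.
\]
This identity is the heart of the matter: $p_g$, and (before the $g$-average) the inner product $\langle p_{g+h},p_g\rangle$, lives naturally on the null diagonal of the product system $\bfW\times_{\tilde\sigma}H^2$, where the Mackey-group ergodic theorems (giving only $L^2$-convergence) are useless; a single van der Corput step in $g$, powered precisely by the finite-index hypothesis on $(\psi-\varphi)(\Gamma)$, replaces that diagonal by an honest integral against the Haar measure $m_W$.

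To finish I would show $\UClim_h\xi_h=0$. Reduce $c_1,c_2$ to trigonometric polynomials (legitimate since $\xi_h$ depends $L^2(W)$-continuously and bilinearly on $\hat\Xi$, and the hypothesis is no longer needed), so that $\hat\Xi=\sum_\mu a_\mu\mu$ is a finite sum over $\mu\in\hat W$ and $\xi_h=\sum_{\mu,\mu'}\overline{a_\mu}a_{\mu'}\,\mu'(\tilde\alpha_h)\,\widehat{\beta_h}(\mu\overline{\mu'})$, where $\beta_h:=(\chi_1\otimes\chi_2)\circ\tilde\sigma_h$ and $\widehat{\beta_h}(\nu):=\int_W\beta_h\,\overline\nu\,dm_W$. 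Fix $\mu,\mu'$, put $\nu=\mu\overline{\mu'}$ and $\beta':=\beta\cdot(\mu'\circ\tilde\alpha)$, a cocycle over $(W,\tilde\alpha)$; integrating over the ergodic decomposition $m_W=\int_Z m_z\,dm_Z(z)$ and using the mean ergodic theorem in the group extension $W_z\times_{\beta'}S^1$ gives $\UClim_h\bigl[\mu'(\tilde\alpha_h)\widehat{\beta_h}(\nu)\bigr]=\int_Z\langle P^{(z)}_{\mathrm{inv}}(s),\,\nu(w)s\rangle\,dm_Z(z)$, with $P^{(z)}_{\mathrm{inv}}$ the projection onto invariant functions. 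By Proposition~\ref{prop: Mackey properties}(4) the $s$-Fourier support of $P^{(z)}_{\mathrm{inv}}(s)$ lies in the annihilator of the Mackey group of $\beta'$ over $(W_z,\tilde\alpha)$, so this pairing vanishes unless $\beta'$ is a coboundary over $(W_z,\tilde\alpha)$. But $\mu'\circ\tilde\alpha$ is an eigenvalue of $(W_z,\tilde\alpha)$, hence a coboundary there, so $\beta'$ being a coboundary would force $(\chi_1\otimes\chi_2)\circ\tilde\sigma$ to be a coboundary over $(W_z,\tilde\alpha)$; since $M_z=M$ for a.e.\ $z$ by \cite[Proposition~7.9]{abb}, this would put $\chi_1\otimes\chi_2$ in $M^\perp$, a contradiction. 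Hence every term vanishes, $\UClim_h\xi_h=0$, and Lemma~\ref{lem: vdC} yields $\UClim_g p_g=0$, as required.

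I expect the step demanding the most care to be the passage from $\langle p_{g+h},p_g\rangle$ to the Haar integral over $W$ and the surrounding bookkeeping of measure classes — keeping track of \emph{where} each average lives (the diagonal of $\bfW\times_{\tilde\sigma}H^2$ before differencing, all of $W$ afterward) and checking that the orthogonality encoded by $\chi_1\otimes\chi_2\notin M^\perp$ genuinely survives to the last line; by contrast the two approximation reductions and the final Fourier computation on $W$ are routine.
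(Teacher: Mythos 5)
Your proof is correct, and its architecture is the same as the paper's: a single van der Corput step converts the diagonal average into an honest integral over $W$, and the Mackey group together with the fact that $M_z=M$ for a.e.\ $z$ then forces the averaged correlation $\xi_h$ to vanish. You have correctly identified where the finite-index hypothesis on $(\psi-\varphi)(\Gamma)$ is used, and the formula you obtain for $\xi_h$ after one application of Lemma~\ref{lem: vdC} is equivalent to the one in the paper (the paper writes it as $\innprod{\tilde T_h F}{F}_{L^2(W\times H^2)}$ with $F(w,x)=f_1(w_1,x_1)f_2(w_2,x_2)$, without passing to Fourier modes in $H$).

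The one genuine stylistic difference is in the last step. The paper does not decompose $f_1,f_2$ into simple tensors over $\hat H$, nor does it expand $\hat\Xi$ into trigonometric polynomials over $\hat W$. Instead it keeps $F$ intact, observes that the hypothesis \eqref{eq: orthogonal to chi} is exactly the hypothesis of Proposition~\ref{prop: Mackey ET} applied to $F$ on each ergodic fiber $(W_z,\tilde\alpha)\times_{\tilde\sigma}H^2$ with $z\in E_1\cap E_2\cap E_3$, and then integrates over $z$ by dominated convergence to get $\UClim_h\tilde T_h F=0$ in $L^2(W\times H^2)$; that is precisely the statement $\UClim_h\xi_h=0$. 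Your Fourier expansion followed by the mean-ergodic-theorem computation in $W_z\times_{\beta'}S^1$ amounts to re-deriving the special case of Proposition~\ref{prop: Mackey ET} you need, so you pay for not quoting that lemma with two extra approximation layers (the $L^2$-truncations in $\hat H$ and $\hat W$) and some bookkeeping. Both are valid; the paper's route is leaner because it reuses the general Mackey ergodic theorem that was already proved in Section~\ref{sec: prelim}, while your route makes the role of each character $\mu'\in\hat W$ fully explicit.

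One small point worth flagging: when you say $\xi_h$ "depends $L^2(W)$-continuously and bilinearly on $\hat\Xi$," you should note that this holds with a constant depending on $\|(\chi_1\otimes\chi_2)\circ\tilde\sigma_h\|_\infty=1$, uniformly in $h$, so the approximation indeed commutes with $\UClim_h$. This is fine, but deserves a word since you are interchanging a limit with an approximation.
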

\begin{proof}
	The proof is very much in the spirit of \cite[Proposition 7.10]{abb}.
	Define $\tilde{T}_g : W \times H^2 \to W \times H^2$ by
	\begin{equation*}
		\tilde{T}_g(w, x) = \left( w + \tilde{\alpha}_g, x + \tilde{\sigma}_g(w) \right).
	\end{equation*}
	Set $F(w, x) := f_1(w_1, x_1) f_2(w_2, x_2)$ for $w = (w_1, w_2) \in W$ and $x = (x_1, x_2) \in H^2$.
	We claim $F$ is orthogonal to the space of $(\tilde{T}_g)_{g \in \Gamma}$-invariant functions in $L^2(W \times H^2)$.
	
	Let $E_1 := \{z \in Z : M_z = M\}$.
	As discussed in Section \ref{sec: Mackey}, $E_1$ has full measure.
	Let $E_2 := \{z \in Z : \eqref{eq: orthogonal to chi}~\text{holds for a.e.}~w = (w_1, w_2) \in W_z\}$,
	and let $E_3 := \{z \in Z : F \in L^{\infty} \left( W_z \times H^2 \right)\}$.
	By Fubini's theorem, both of the sets $E_2$ and $E_3$ have full measure in $Z$.
	Put $E := E_1 \cap E_2 \cap E_3$.
	
	Suppose $z \in E$, and let $\tilde{\chi} \in M_z^{\perp} = M^{\perp}$.
	Then
	\begin{equation*}
		\int_{H^2}{F(w, x) \tilde{\chi}(x)~dx} = 0
	\end{equation*}
	for a.e. $w \in W_z$.
	Therefore,
	\begin{equation*}
		\UClim_{g \in \Gamma}{\tilde{T}_gF} = 0
	\end{equation*}
	in $L^2 \left( W_z \times H^2 \right)$ by Proposition \ref{prop: Mackey ET}.
	
	Fix a F{\o}lner sequence $(\Phi_N)_{N \in \N}$ in $\Gamma$, and define the average
	\begin{equation*}
		A_N := \frac{1}{|\Phi_N|} \sum_{g \in \Phi_N}{\tilde{T}_g F} \in L^2 \left( W \times H^2 \right).
	\end{equation*}
	We want to show $A_N \to 0$ in $L^2 \left( W \times H^2 \right)$.
	Decompose the Haar measure on $W$ as $m_W = \int_Z{m_z~dz}$.
	Then
	\begin{align*}
		\norm{L^2 \left( W \times H^2 \right)}{A_N}^2 & = \int_W{\int_{H^2}{|A_N(w,x)|^2~dx}~dw} \\
		 & = \int_Z{\left( \int_{W_z}{\int_{H^2}{|A_N(w,x)|^2~dx}~dm_z} \right)~dz} \\
		 & = \int_Z{\norm{L^2 \left( W_z \times H^2 \right)}{A_N}^2~dz} \\
		 & \tendsto{N \to \infty} 0.
	\end{align*}
	
	To complete the proof, we apply the van der Corput trick.
	Let $u_g := T_{\varphi(g)}f_1 \cdot T_{\psi(g)}f_2 \in L^2(Z \times H)$.
	Then
	\begin{equation*}
		\innprod{u_{g+h}}{u_g} = \int_{Z \times H}{(\overline{f}_1 \cdot T_{\varphi(h)}f_1) \cdot T_{(\psi - \varphi)(g)}(\overline{f}_2 \cdot T_{\psi(h)}f_2)~d\mu}.
	\end{equation*}
	Since $(\psi - \varphi)(\Gamma)$ has finite index in $\Gamma$, we have $\mI_{\psi - \varphi} \subseteq \mZ$,
	so by the mean ergodic theorem,
	\begin{align*}
		\xi_h & := \UClim_{g \in \Gamma}{\innprod{u_{g+h}}{u_g}} \\
		 & = \int_{Z \times H}{(\overline{f}_1 \cdot T_{\varphi(h)}f_1)(z,x) \cdot
		 \left( \int_Z{\E{\overline{f}_2 \cdot T_{\psi(h)}f_2}{\mZ}
		 \left( z + \left( \hat{\psi} - \hat{\varphi} \right)(t) \right)}~dt \right)~dz~dx} \\
		 & = \int_W{\E{\overline{f}_1 \cdot T_{\varphi(h)}f_1}{\mZ}(w_1)
		 \cdot \E{\overline{f}_2 \cdot T_{\psi(h)}f_2}{\mZ}(w_2)~dw} \\
		 & = \int_{W \times H^2}{\left( \overline{f}_1 \cdot T_{\varphi(h)}f_1 \right)(w_1, x_1)
		 \left( \overline{f}_2 \cdot T_{\psi(h)}f_2 \right)(w_2, x_2)~dw~dx} \\
		 & = \innprod{\tilde{T}_hF}{F}_{L^2 \left( W \times H^2 \right)}.
	\end{align*}
	Now since $\UClim_{h \in \Gamma}{\tilde{T}_hF} = 0$ in $L^2(W \times H^2)$,
	we have $\UClim_{h \in \Gamma}{\xi_h} = 0$.
	By the van der Corput lemma (Lemma \ref{lem: vdC}),
	it follows that $\UClim_{g \in \Gamma}{u_g} = 0$ in $L^2(Z \times H)$ as desired.
\end{proof}

The next step is to construct the function $\omega$ appearing in Theorem \ref{thm: limit}.
Fix $z \in Z$ such that $M_z = M$.
Note that our assumptions on the discrete spectrum of $\bfX$ ensure that $(W_z, \tilde{\alpha})$ is isomorphic to $(Z, \alpha)$
under the map $(w_1, w_2) \mapsto \hat{\theta}_1(w_1 - z) + \hat{\theta}_2(w_2 - z)$.
Hence, by Proposition \ref{prop: continuous extension mod Mackey},
there is a function $\tilde{\omega} : Z \times Z \to H^2/M$
such that $t \mapsto \tilde{\omega}(t, \cdot)$ is a continuous map $Z \to \M(Z, H^2/M)$ and
\begin{equation*}
	\tilde{\omega}(\alpha_g, z) \equiv \tilde{\sigma}_g(z) \pmod{M}.
\end{equation*}
By the Kuratowski and Ryll-Nardewski measurable selection theorem (see \cite[Section 5.2]{srivastava}),
we can lift $\tilde{\omega}$ to a function $\omega : Z \times Z \to H^2$ such that $\omega + M = \tilde{\omega}$. \\

Now we put everything together to prove Theorem \ref{thm: limit}:

\begin{proof}[Proof of Theorem \ref{thm: limit}]
	By linearity, we may assume $f_i = h_i \otimes \chi_i$ with $\chi_i \in \hat{H}$ for $i = 1, 2$.
	Let $\tilde{\chi} = \chi_1 \otimes \chi_2 \in \hat{H^2}$.
	Then the right hand side of \eqref{eq: limit} is equal to
	\begin{equation} \label{eq: right hand side 2}
		\chi_1(x) \chi_2(x) \left( \int_Z{h_1 \left( z + \hat{\varphi}(t) \right) h_2 \left( z + \hat{\psi}(t) \right) \tilde{\chi} \left( \omega(t,z) \right)~dt} \right) \left( \int_M{\tilde{\chi}~dm_M} \right).
	\end{equation}
	We break the proof into two cases depending on whether or not $\tilde{\chi}$ belongs to $M^{\perp}$. \\
	
	\underline{Case 1}: $\tilde{\chi} \notin M^{\perp}$.
	
	In this case, the expression in \eqref{eq: right hand side 2} is equal to 0.
	Proposition \ref{prop: Mackey orthogonality} guarantees that the left hand side of \eqref{eq: limit} is also equal to zero. \\
	
	\underline{Case 2}: $\tilde{\chi} \in M^{\perp}$.
	
	Let $F : Z \times Z \to \C$ be given by
	\begin{equation*}
		F(t, z) = h_1 \left( z + \hat{\varphi}(t) \right) h_2 \left( z + \hat{\psi}(t) \right) \tilde{\chi} \left( \omega(t,z) \right).
	\end{equation*}
	Note that $t \mapsto F(t, \cdot)$ is a continuous function from $Z$ to $L^2(Z)$,
	and the expression in \eqref{eq: right hand side 2} simplifies to $\chi_1(x) \chi_2(x) \int_Z{F(t,z)~dt}$.
	
	Moving to the left hand side of \eqref{eq: limit}, for $g \in \Gamma$, we have
	\begin{equation*}
		f_1(T_{\varphi(g)}(z,x)) f_2(T_{\psi(g)}(z,x)) = \chi_1(x) \chi_2(x) F(\alpha_g, z)
	\end{equation*}
	Therefore, since $(Z, \alpha)$ is uniquely ergodic, we have
	\begin{equation*}
		\UClim_{g \in \Gamma}{F(\alpha_g, z)} = \int_Z{F(t,z)~dt}
	\end{equation*}
	in $L^2(Z)$ by Lemma \ref{lem: Hil-valued ET}.
	This completes the proof.
\end{proof}

%%%%%%%%%%%%%%%%%%%%%%%%%%%%%%%%%%%%%%%%%%%%%%%%%%%%%%%%
% ---- KHINTCHINE-TYPE RECURRENCE ---- %
%%%%%%%%%%%%%%%%%%%%%%%%%%%%%%%%%%%%%%%%%%%%%%%%%%%%%%%%

\section{Khintchine-type recurrence} \label{sec: proof}

In this section, we prove Theorem \ref{thm: main}.
We will first prove the following statement (corresponding to the case $\eta = \id_{\Gamma}$),
from which Theorem \ref{thm: main} can be quickly deduced:

\begin{thm} \label{thm: averaging form}
	Let $\Gamma$ be a countable discrete abelian group.
	Let $\varphi, \psi \in \End(\Gamma)$ such that $(\psi - \varphi)(\Gamma)$ is a finite index subgroup of $\Gamma$.
	Suppose there exist $\theta_1, \theta_2 \in \End(\Gamma)$ such that
	$\theta_1 \circ \varphi + \theta_2 \circ \psi$ is injective.
	Then for any ergodic measure-preserving $\Gamma$-system $\left( X, \mX, \mu, (T_g)_{g \in \Gamma} \right)$
	with Kronecker factor $\bfZ = (Z, \alpha)$, any $A \in \mX$, and any $\eps > 0$,
	there is a continuous function $\kappa : Z \to [0, \infty)$ with $\int_Z{\kappa(z)~dz} = 1$ such that
	\begin{equation*}
		\UClim_{g \in \Gamma}{\kappa(\alpha_g)~\mu \left( A \cap T_{\varphi(g)}^{-1}A \cap T_{\psi(g)}^{-1}A \right)} > \mu(A)^3 - \eps.
	\end{equation*}
\end{thm}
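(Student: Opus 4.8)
The plan is to feed the correlation sequence $g \mapsto \mu\bigl(A \cap T_{\varphi(g)}^{-1}A \cap T_{\psi(g)}^{-1}A\bigr)$ into the limit formula of Section~\ref{sec: limit}, after reducing to a well-behaved quasi-affine system, and then to take $\kappa$ to be an approximate identity supported near $0 \in Z$ so that only the value of the limit ``at $t = 0$'' survives; that value is $\ge \mu(A)^3$ by a convexity argument.

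\emph{Step 1: reduction.} The weight $g \mapsto \kappa(\alpha_g)$ is measurable with respect to the Kronecker factor, and the quasi-affine factor $\bfZ^2$ is characteristic for the double averages attached to $\{\varphi, \psi\}$ when $(\psi - \varphi)(\Gamma)$ has finite index (\cite{abs}); hence for every continuous $\kappa : Z \to \C$,
\begin{equation*}
	\UClim_{g \in \Gamma}{\kappa(\alpha_g)\, \mu\bigl(A \cap T_{\varphi(g)}^{-1}A \cap T_{\psi(g)}^{-1}A\bigr)} = \UClim_{g \in \Gamma}{\kappa(\alpha_g) \int_X{F \cdot T_{\varphi(g)}F \cdot T_{\psi(g)}F\, d\mu}},
\end{equation*}
where $F := \E{\ind_A}{\bfZ^2}$ satisfies $0 \le F \le 1$ and $\int_X F\, d\mu = \mu(A)$. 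Replacing $X$ by an extension alters neither the left-hand side of the desired inequality nor $\mu(A)$, so by Theorems~\ref{thm: complete divisible extension} and~\ref{thm: Mackey prod} we may reduce to the case that $\bfX$ is itself an ergodic quasi-affine system $\bfZ \times_{\sigma} H$ whose discrete spectrum is $\{\varphi, \psi, \theta_1, \theta_2\}$-complete and $(\theta_1 \circ \varphi + \theta_2 \circ \psi)$-divisible and whose Mackey group $M \le H^2$ associated with $\{\varphi, \psi\}$ splits as $M = M_{\varphi} \times M_{\psi}$; the (routine but slightly delicate) transfer of the final conclusion back to the original system uses only that the recurrence quantities are unchanged under extension.

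\emph{Step 2: the limit formula and an approximate identity.} By Corollary~\ref{cor: twisted limit} with $f_0 = f_1 = f_2 = F$, for any continuous $\kappa : Z \to [0, \infty)$ with $\int_Z \kappa\, dm_Z = 1$,
\begin{equation*}
	\UClim_{g \in \Gamma}{\kappa(\alpha_g) \int_X{F \cdot T_{\varphi(g)}F \cdot T_{\psi(g)}F\, d\mu}} = \int_Z{\kappa(t)\, \Psi(t)\, dm_Z(t)},
\end{equation*}
where
\begin{equation*}
	\Psi(t) := \int_{Z \times H \times M}{F(z,x)\, F\bigl(z + \hat\varphi(t),\, x + u + \omega_1(t,z)\bigr)\, F\bigl(z + \hat\psi(t),\, x + v + \omega_2(t,z)\bigr)\, dm_Z(z)\, dm_H(x)\, dm_M(u,v)}.
\end{equation*}
Since $(u,v)$ is integrated against Haar measure on $M$, the value $\Psi(t)$ depends on $\omega(t,\cdot)$ only modulo $M$; combined with property~(1) of Theorem~\ref{thm: limit}, this shows that $\Psi : Z \to [0,\infty)$ is continuous. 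Thus, letting $\kappa$ run through an approximate identity concentrated near $0 \in Z$, the quantity $\int_Z \kappa\, \Psi\, dm_Z$ converges to $\Psi(0)$, and it suffices to prove $\Psi(0) \ge \mu(A)^3$.

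\emph{Step 3: the value at the origin and the convexity estimate.} By property~(1) of Theorem~\ref{thm: limit} we have $\omega(0,z) = (\omega_1(0,z), \omega_2(0,z)) \in M$ for every $z$, so translating $(u,v) \mapsto (u,v) - \omega(0,z)$ inside the Haar integral over $M$ gives
\begin{equation*}
	\Psi(0) = \int_{Z \times H \times M}{F(z,x)\, F(z, x + u)\, F(z, x + v)\, dm_Z(z)\, dm_H(x)\, dm_M(u,v)}.
\end{equation*}
Because $M = M_{\varphi} \times M_{\psi}$, the integral over $M$ factors, and writing $G_{\varphi}(z,\cdot) := \E{F(z,\cdot)}{\B_{M_{\varphi}}}$ and $G_{\psi}(z,\cdot) := \E{F(z,\cdot)}{\B_{M_{\psi}}}$ for the conditional expectations onto the $\sigma$-algebras of cosets of $M_{\varphi}$, resp.\ $M_{\psi}$, in $H$, we obtain $\Psi(0) = \int_{Z}\bigl( \int_H F(z,x)\, G_{\varphi}(z,x)\, G_{\psi}(z,x)\, dm_H(x) \bigr) dm_Z(z)$. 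It remains to establish, for a.e.\ $z$, the pointwise inequality
\begin{equation*}
	\int_H{F(z,x)\, G_{\varphi}(z,x)\, G_{\psi}(z,x)\, dm_H(x)} \ge \left( \int_H{F(z,x)\, dm_H(x)} \right)^3 .
\end{equation*}
This is a convexity estimate in the spirit of \cite{bhk}: since $F(z,\cdot) \ge 0$, one first reduces modulo $M_{\varphi} \cap M_{\psi}$ and disintegrates over the cosets of the closed subgroup generated by $M_{\varphi}$ and $M_{\psi}$, after which the claim becomes the assertion that a nonnegative function on a direct product of two compact groups correlates with the product of its two marginals at least as strongly as the constant function with the same average, which is an elementary convexity fact. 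Applying Jensen's inequality once more in the $z$-variable gives $\Psi(0) \ge \bigl( \int_X F\, d\mu \bigr)^3 = \mu(A)^3$, completing the argument.

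\emph{Where the difficulty lies.} The genuine content is not in the argument above but in the ingredients it rests on: the characteristic factor theorem of \cite{abs}, and — crucially — the splitting $M = M_{\varphi} \times M_{\psi}$ proved in Section~\ref{sec: Mackey}. It is precisely this product structure that turns the $t = 0$ slice into a product of two conditional expectations of $F$, and hence into something controllable by a convexity estimate with exponent $3$; for an arbitrary closed subgroup $M \le H^2$ one has no such control, and it is unclear whether the conclusion of the theorem would even hold. Within Section~\ref{sec: proof} itself, the one point requiring a little care is the pointwise convexity inequality, together with the bookkeeping of the reduction in Step 1.
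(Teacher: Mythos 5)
There is a genuine gap in Step~1. You assert that $\mZ^2$ is a characteristic factor for the weighted double average and that therefore all three occurrences of $f = \ind_A$ may be replaced by $F := \E{\ind_A}{\mZ^2}$. This is not what the characteristic factor theorem from \cite{abs} gives. Theorem~4.10 of \cite{abs} (which the paper's proof invokes) replaces the two \emph{shifted} copies of $f$ by $\E{f}{\mZ^2 \lor \mI_{\varphi}}$ and $\E{f}{\mZ^2 \lor \mI_{\psi}}$ respectively, where $\mI_{\varphi}$ and $\mI_{\psi}$ are the $\sigma$-algebras of $\varphi(\Gamma)$- and $\psi(\Gamma)$-invariant sets, and leaves the unshifted copy $f_0 = f$ untouched. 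Under the hypotheses of Theorem~\ref{thm: averaging form} only $(\psi - \varphi)(\Gamma)$ is assumed to have finite index; the images $\varphi(\Gamma)$ and $\psi(\Gamma)$ may well have infinite index, and then $\mI_{\varphi}$, $\mI_{\psi}$ are large $\sigma$-algebras that are \emph{not} contained in $\mZ^2$. Dropping them, as you do, is not valid, and neither is replacing $f_0$ by $F$, since the resulting $\UClim$ is no longer a $\mZ^2$-measurable function.

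To see that your claimed equality in Step~1 genuinely fails, take $\Gamma = \Z^2$, $\varphi(a,b) = (a,0)$, $\psi(a,b) = (0,b)$ (so $\psi - \varphi$ is bijective and $\theta_1 = \theta_2 = \id$ gives $\theta_1\varphi + \theta_2\psi = \id$, so the hypotheses hold). Let $X = Y_1 \times Y_2$ be a product of two weakly mixing $\Z$-systems with $T_{(a,b)} = S_1^a \times S_2^b$, and $A = A_1 \times A_2$. The product is ergodic and weakly mixing, so $\mZ^2$ is trivial and $F \equiv \mu(A)$. Your Step~1 (with $\kappa \equiv 1$) would then give $\UClim_{g} \mu(A \cap T_{\varphi(g)}^{-1}A \cap T_{\psi(g)}^{-1}A) = \mu(A)^3$, but a direct computation gives
\[
\UClim_{(g_1,g_2) \in \Z^2} \mu_1\bigl(A_1 \cap S_1^{-g_1}A_1\bigr)\,\mu_2\bigl(A_2 \cap S_2^{-g_2}A_2\bigr) = \mu_1(A_1)^2 \mu_2(A_2)^2 = \mu(A)^2 \ne \mu(A)^3
\]
in general. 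The extra mass is exactly what $\mI_{\varphi}$ and $\mI_{\psi}$ capture: here $\E{\ind_A}{\mZ^2 \lor \mI_{\varphi}} = \mu_1(A_1)\ind_{A_2}$ and $\E{\ind_A}{\mZ^2 \lor \mI_{\psi}} = \ind_{A_1}\mu_2(A_2)$, and the paper's final integral $\int_X f\cdot\E{f}{\mW_{\varphi}\lor\mI_{\varphi}}\cdot\E{f}{\mW_{\psi}\lor\mI_{\psi}}\,d\mu$ correctly reproduces $\mu(A)^2$. The paper's actual proof keeps $f$ in the first slot, decomposes $\E{f}{\mZ^2 \lor \mI_{\varphi}}$ and $\E{f}{\mZ^2 \lor \mI_{\psi}}$ into sums of products $c_i h_i$, $d_j k_j$ with $c_i$, $d_j$ respectively $\varphi(\Gamma)$- and $\psi(\Gamma)$-invariant, moves the invariant parts into the $f_0$ slot (where they commute with $T_{\varphi(g)}$, $T_{\psi(g)}$), and only then applies the limit formula to the $\mZ^2$-measurable functions $h_i$, $k_j$. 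Your Steps~2 and~3 (approximate identity, $\omega(0,\cdot) \in M$, the splitting $M = M_{\varphi}\times M_{\psi}$, and the convexity estimate that is essentially Chu's lemma \cite[Lemma 1.6]{chu}) are in the right spirit and match the paper once the $\mI_{\varphi}$- and $\mI_{\psi}$-parts are carried along, but as written the argument proves a weaker statement than required.
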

\begin{proof}
	Let $\bfX = \left( X, \mX, \mu, (T_g)_{g \in \Gamma} \right)$ be an ergodic $\Gamma$-system
	with Kronecker factor $\bfZ = (Z, \alpha)$, let $A \in \mX$, and let $\eps > 0$.
	Put $f = \ind_A$.
	Let $\kappa : Z \to [0, \infty)$ be a continuous function (to be specified later) with $\int_Z{\kappa(z)~dz} = 1$.
	By \cite[Theorem 4.10]{abs}, we may assume without loss of generality (by passing to an extension if necessary) that
	\begin{align*}
		\UClim_{g \in \Gamma}&{~\kappa(\alpha_g)~\int_X{f \cdot T_{\varphi(g)} f \cdot T_{\psi(g)} f~d\mu}} \\
		 & = \UClim_{g \in \Gamma}{\kappa(\alpha_g)~\int_X{f \cdot T_{\varphi(g)} \E{f}{\mZ^2 \lor \mI_{\varphi}}
		 \cdot T_{\psi(g)} \E{f}{\mZ^2 \lor \mI_{\psi}}~d\mu}}.
	\end{align*}
	We may further assume that the discrete spectrum of $\bfX$ is $\{\varphi, \psi, \theta_1, \theta_2\}$-complete
	and $(\theta_1 \circ \varphi + \theta_2 \circ \psi)$-divisible
	by passing to another extension if necessary with the help of Theorem \ref{thm: complete divisible extension}.
	
	Let $f_{\varphi} = \E{f}{\mZ^2 \lor \mI_{\varphi}}$ and $f_{\psi} = \E{f}{\mZ^2 \lor \mI_{\psi}}$.
	We may expand
	\begin{equation*}
		f_{\varphi} = \sum_{i}{c_i h_i} \qquad \text{and} \qquad f_{\psi} = \sum_{j}{d_j k_j},
	\end{equation*}
	where $c_i$ is $\varphi(\Gamma)$-invariant, $d_j$ is $\psi(\Gamma)$-invariant, and $h_i$, $k_j$ are $\mZ^2$-measurable.
	Write $\bfZ^2 = \bfZ \times_{\sigma} H$ and apply Corollary \ref{cor: twisted limit}:
	\begin{align*}
		\UClim_{g \in \Gamma}&{~\kappa(\alpha_g)~\mu \left( A \cap T_{\varphi(g)}^{-1}A \cap T_{\psi(g)}^{-1}A \right)} \\
		 & = \UClim_{g \in \Gamma}{\kappa(\alpha_g)~\sum_{i,j}
		 {\int_X{f c_i d_j \cdot T_{\varphi(g)} h_i \cdot T_{\psi(g)} k_j~d\mu}}} \\
		 & = \sum_{i,j}{\int_{X \times Z \times M}{f(x) c_i(x) d_j(x) \cdot \kappa(t)
		 \cdot h_i \left( \pi_Z(x) + \hat{\varphi}(t), \pi_H(x) + u + \omega_1 \left( t, \pi_Z(x) \right) \right)}} \\
		 & \qquad \qquad \qquad {{ k_j \left( \pi_Z(x) + \hat{\psi}(t), \pi_H(x) + v + \omega_2 \left( t, \pi_Z(x) \right) \right)
		 ~d\mu(x)~dm_Z(t)~dm_M(u,v)}}.
	\end{align*}
	Taking $\kappa$ supported on a sufficiently small neighborhood of $0$ in $Z$, it suffices to establish the inequality
	\begin{equation} \label{eq: key inequality}
		\sum_{i,j}{\int_{X \times M}{f(x) c_i(x) d_j(x) h_i \left( \pi_Z(x), \pi_H(x) + u \right) k_j \left( \pi_Z(x), \pi_H(x) + v \right)~d\mu(x)~dm_M(u,v)}} \ge \mu(A)^3.
	\end{equation}
	
	By applying Theorem \ref{thm: Mackey prod} together with Theorem \ref{thm: HK extension}
	and passing to yet another extension,
	we may assume the Mackey group $M$ decomposes as $M = M_{\varphi} \times M_{\psi}$.
	Let $\mW_{\varphi}$ be the $\sigma$-algebra generated by functions $f : Z \times H \to \C$ satisfying $f(z,x) = f(z,x+y)$
	for $y \in M_{\varphi}$, and let $\mW_{\psi}$ be defined similarly.
	Then the left hand side of \eqref{eq: key inequality} is equal to
	\begin{equation*}
		\int_X{f \cdot \E{f}{\mW_{\varphi} \lor \mI_{\varphi}} \cdot \E{f}{\mW_{\psi} \lor \mI_{\psi}}~d\mu},
	\end{equation*}
	which is bounded below by $\left( \int_X{f~d\mu} \right)^3 = \mu(A)^3$ by \cite[Lemma 1.6]{chu}.
\end{proof}

Now we complete the proof of Theorem \ref{thm: main}:

\begin{proof}[Proof of Theorem \ref{thm: main}]
	Let $\bfX = \left( X, \mX, \mu, (T_g)_{g \in \Gamma} \right)$ be an ergodic measure-preserving $\Gamma$-system,
	let $A \in \mX$, and let $\eps > 0$.
	
	The $\Gamma$-system $\left( X, \mX, \mu, \left( T_{\eta(g)} \right)_{g \in \Gamma} \right)$
	has finitely many ergodic components (at most the index of $\eta(\Gamma)$ in $\Gamma$).
	Let $\mu = \sum_{i=1}^l{\mu_i}$ be the ergodic decomposition.
	Then each of the systems $\bfX_i = \left( X, \mX, \mu_i, \left( T_{\eta(g)} \right)_{g \in \Gamma} \right)$ is ergodic,
	and they all have the same Kronecker factor $\bfZ_{\eta} = (Z_{\eta}, \alpha \circ \eta)$,
	where $Z_{\eta} = \overline{\left\{ \alpha_{\eta(g)} : g \in \Gamma \right\}}$.
	By Theorem \ref{thm: averaging form}, we may therefore find a continuous function
	$\kappa : Z_{\eta} \to [0, \infty)$ such that $\int_{Z_{\eta}}{\kappa(t)~dt} = 1$ and
	\begin{equation*}
		\UClim_{h \in \eta(\Gamma)}{\kappa \left( \alpha_h \right) \mu_i \left( A \cap T_{\varphi'(h)}^{-1} A \cap T_{\psi'(h)}^{-1} A \right)} > \mu_i(A)^3 - \eps
	\end{equation*}
	for each $i \in \{1, \dots, l\}$.
	Then by Jensen's inequality,
	\begin{equation*}
		\UClim_{h \in \eta(\Gamma)}{\kappa \left( \alpha_h \right) \mu \left( A \cap T_{\varphi'(h)}^{-1} A \cap T_{\psi'(h)}^{-1} A \right)} > \mu(A)^3 - \eps.
	\end{equation*}
	It follows that
	\begin{equation*}
		\left\{ h \in \eta(\Gamma) : \mu \left( A \cap T_{\varphi'(h)}^{-1} A \cap T_{\psi'(h)}^{-1} A \right) > \mu(A)^3 - \eps \right\}
	\end{equation*}
	is syndetic in $\eta(\Gamma)$:
	if not, then by \cite[Lemma 1.9]{abb}, there exists a F{\o}lner sequence $(\Phi_N)_{N \in \N}$ in $\eta(\Gamma)$ such that
	\begin{equation*}
		\mu \left( A \cap T_{\varphi'(h)}^{-1} A \cap T_{\psi'(h)}^{-1} A \right) \le \mu(A)^3 - \eps
	\end{equation*}
	for every $h \in \bigcup_{N \in \N}{\Phi_N}$, whence
	\begin{equation*}
		\lim_{N \to \infty}{\frac{1}{|\Phi_N|} \sum_{h \in \Phi_N}{\kappa \left( \alpha_h \right) \mu \left( A \cap T_{\varphi'(h)}^{-1} A \cap T_{\psi'(h)}^{-1} A \right)}} \le \left( \mu(A)^3 - \eps \right) \int_{Z_{\eta}}{\kappa(t)~dt} = \mu(A)^3 - \eps,
	\end{equation*}
	which is a contradicition.
	But $\eta(\Gamma)$ has finite index in $\Gamma$, so
	\begin{equation*}
		\left\{ g \in \Gamma : \mu \left( A \cap T_{\varphi(g)}^{-1} A \cap T_{\psi(g)}^{-1} A \right) > \mu(A)^3 - \eps \right\}
	\end{equation*}
	is syndetic in $\Gamma$.
\end{proof}

%%%%%%%%%%%%%%%%%%%%%%%%%%%%%%%%%%%%%%%%%%%%%%%%%%%%%%%%
% ---- ACKNOWLEDGEMENTS ---- %
%%%%%%%%%%%%%%%%%%%%%%%%%%%%%%%%%%%%%%%%%%%%%%%%%%%%%%%%

\section*{Acknowledgements}

This material is based upon work supported by the National Science Foundation under Grant No. DMS-1926686.
Thanks to Vitaly Bergelson for helpful comments on the introduction and to Or Shalom for several useful discussions.

%%%%%%%%%%%%%%%%%%%%%%%%%%%%%%%%%%%%%%%%%%%%%%%%%%%%%%%%
% ---- REFERENCES---- %
%%%%%%%%%%%%%%%%%%%%%%%%%%%%%%%%%%%%%%%%%%%%%%%%%%%%%%%%

\end{document}